\documentclass[a4paper,12pt]{amsart}
\usepackage{amsfonts}
\usepackage{a4wide}
\usepackage{amsthm}
\usepackage{amsmath}
\usepackage{amssymb}
\usepackage[final]{graphicx}
\usepackage{graphics}
\usepackage{graphicx}
\usepackage{epsfig}
\usepackage{tabularx}
\usepackage{enumerate}
\usepackage[all]{xy}
\usepackage{eepic}
\usepackage[UKenglish]{babel}
\usepackage{verbatim}

\newtheorem{theorem}{Theorem}[section]
\newtheorem{corollary}[theorem]{Corollary}

\newtheorem{lemma}[theorem]{Lemma}
\newtheorem{proposition}[theorem]{Proposition}

\theoremstyle{definition}
\newtheorem{definition}[theorem]{Definition}

\title{Intrinsic Ergodicity of Open dynamical systems for the doubling map.}
\author{Rafael Alcaraz Barrera}
\date{\today}
\address{Departamento de Matem\'atica aplicada\\
IME-USP\\
Rua do Mat\~ao 1010, Ci\-da\-de U\-ni\-ver\-si\-ta\-ria, 05508-090, S\~ao Paulo SP, Brazil}
\email{rafalba@ime.usp.br} 
\subjclass[2010]{Primary 37B10; Secondary 28D05, 37C70. 37E05, 68R15.}
\keywords{Open dynamical systems, doubling map, transitive component, topological entropy, intrinsic ergodicity}

\begin{document}
\begin{abstract}
We give sufficient conditions for intervals $(a,b)$ such that the associated open dynamical system for the doubling map is intrinsically ergodic. We also show that the set of parameters $(a,b) \in (\frac{1}{4}, \frac{1}{2}) \times (\frac{1}{2},\frac{3}{4})$ such that the attractor $(\Lambda_{(a,b)}, f_{(a,b)})$ is intrinsically ergodic has full Lebesgue measure and we construct a set of points where intrinsic ergodicity does not hold. This paper continues the work started in \cite{yomero3}.   
\end{abstract}
\maketitle

\section{Introduction and Summary}
\label{intro}

\noindent Since their introduction by Pianigianni and Yorke in 1979 \cite{yorkep}, the study of \textit{open dynamical systems} (colloquially, \textit{maps with holes}) has become very active in recent years -see e.g. \cite{markarian1, dysman, fisher, viana1, viana2, misiurewicz} among others. Let us remind the reader the general setting. Given a discrete dynamical system $(X,f)$, where $X$ is a compact metric space and $f:X \to X$ is a continuous and surjective transformation with positive topological entropy, and an open set $U \subset X$, we define \textit{the survivor set corresponding to $U$} as $$X_U = \left\{x \in X : f^n(x) \notin U \hbox{\rm{ for every }} n \in \mathbb{N} \right\}.$$ Since $X_U$ is a forward invariant set, it is possible to consider the dynamical system $(X_U,f_U)$ where $f_U = f\mid_U$ and $(X_U,f_U)$ is called \textit{the open dynamical system corresponding to $U$}. 

\vspace{1em}There are some interesting questions regarding the topological dynamics of $(X_U,f_U)$ as well as the ergodic properties of the invariant measures supported on $X_U$. In particular, it is an intriguing question to determine when an open dynamical system $(X_U, f_U)$ is \textit{intrinsically ergodic}, which is a central problem relating ergodic theory and topological dynamics \cite{climenhaga}. 

\vspace{1em}The purpose of this paper is to study intrinsic ergodicity for some families of open dynamical systems corresponding to the doubling map $2x \mod 1$. Understanding the dynamical properties of open dynamical systems of the doubling map as well as studying the fine properties $X_{(a,b)}$ has awaken interest recently - see \cite{dettman, sidorov1, nilsson, sidorov2} among others. We are interested in understand the following three situations; firstly we want to understand intrinsic ergodicity for holes $(a,b) \subset S^1$ when $b =1$ or $a=0$, secondly when $a= \frac{1}{2}$ or $b = \frac{1}{2}$ and finally we want to determine when the dynamical system given by $(\Lambda_{(a,b)}, f_{(a,b)})$ where $\Lambda_{(a,b)} = S^1_{(a,b)} \cap \left[2b-1, 2a\right]$ is intrinsically ergodic whenever $(a,b)$ is a \textit{centred hole} i.e $\frac{1}{2} \in (a,b)$ and $\Lambda_{(a,b)}$ has positive Hausdorff dimension. This question was posed by Sidorov in \cite{sidorov4}. Our strategy is to answer it is to use the symbolic properties of the binary expansions of the boundary points of the interval as suggested in \cite{bundfuss} as well as tools from symbolic dynamics. 
 
\vspace{1em}The structure of the paper is as follows: In Section \ref{basic} we mention all the tools from symbolic dynamics and ergodic theory used during the paper. In Section \ref{laprevia} we give a brief exposition of research previously undertaken on the intrinsic ergodicity of families open dynamical systems of the doubling map. In particular we explain why the results obtained by Bundfuss et. al \cite{bundfuss}, Nilsson \cite{nilsson}, Climenhaga and Thompson \cite{climenhaga} will imply the intrinsic ergodicity of open dynamical systems corresponding to holes of the form $(0,b)$ and $(a,1)$ whenever $b < \frac{1}{2}$ or $a > \frac{1}{2}$. Also, we show that $\left(\Lambda_{(a,b)}, f_{(a,b)}\right)$ is intrinsically ergodic when $a < \frac{1}{2}$ and  $(a,b) = \left(a, \frac{1}{2}\right)$. 

\vspace{1em}In Section \ref{intrinsicergodicity} we show our main theorem: 

\newtheorem*{elmerobueno}{Theorem \ref{elmerochingon}}
\begin{elmerobueno}
The set $$D_{I} = \left\{(a,b) \in D_1 : (\Lambda_{(a,b)},f_{(a,b)}) \hbox{\rm{ is intrinsically ergodic }} \right\}$$ has full Lebesgue measure, where $$D_1 = \left\{(a,b) \in \left(\frac{1}{4},\frac{1}{2}\right) \times \left(\frac{1}{2},\frac{3}{4}\right): \dim_{H}(X_{(a,b)}) > 0\right\}.$$
\end{elmerobueno}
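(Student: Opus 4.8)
The plan is to translate the entire problem into symbolic dynamics and reduce intrinsic ergodicity to uniqueness of the measure of maximal entropy for the associated family of subshifts. First I would code $S^1$ by binary expansions, so that the doubling map becomes the shift $\sigma$ on $\{0,1\}^{\mathbb{N}}$, and identify the survivor set $X_{(a,b)}$ with the subshift $\Sigma_{(a,b)}$ of sequences $x$ with $\sigma^n(x) \notin (a,b)$ for all $n$, where the admissibility is governed by the lexicographically extremal binary expansions $\alpha$ and $\beta$ of the endpoints. Since $(a,b)$ is a centred hole with $a \in (\frac{1}{4},\frac{1}{2})$ and $b \in (\frac{1}{2},\frac{3}{4})$, these expansions begin with $01$ and $10$ respectively, and $\Lambda_{(a,b)}$ corresponds to the transitive core of $\Sigma_{(a,b)}$. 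Under this identification $(\Lambda_{(a,b)},f_{(a,b)})$ is intrinsically ergodic precisely when this core carries a unique measure of maximal entropy, and positivity of $\dim_H(X_{(a,b)})$ is equivalent to positivity of the topological entropy $h(\Sigma_{(a,b)})$. Thus the theorem reduces to verifying this uniqueness for Lebesgue-a.e. pair $(\alpha,\beta)$.

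Next I would isolate a purely combinatorial sufficient condition for uniqueness, to be verified generically later. The natural candidate is a Climenhaga--Thompson decomposition $\mathcal{L} = \mathcal{C}^p\,\mathcal{G}\,\mathcal{C}^s$ of the language of $\Sigma_{(a,b)}$: I would exhibit a collection $\mathcal{G}$ of good words whose every shift stays strictly between the itineraries $\alpha$ and $\beta$, so that gluing is unobstructed and $\mathcal{G}$ enjoys the weak specification property, while the obstruction collections $\mathcal{C}^p,\mathcal{C}^s$ consist of the prefixes and suffixes forced to shadow $\alpha$ or $\beta$. The analytic content of this step is the strict entropy gap $h(\mathcal{C}^p),\,h(\mathcal{C}^s) < h(\Sigma_{(a,b)})$, which I would obtain by bounding the growth rate of words staying close to the boundary itineraries in terms of the combinatorics of $\alpha$ and $\beta$. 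Granting this gap, the Climenhaga--Thompson theorem gives a unique measure of maximal entropy; equivalently, whenever $\Sigma_{(a,b)}$ is transitive and synchronized, the classical theory of coded and synchronized shifts yields intrinsic ergodicity.

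The heart of the proof is showing that this symbolic condition holds for a.e. parameter, equivalently that the exceptional set $D_1 \setminus D_I$ is Lebesgue-null. My plan is a Fubini argument: fixing $a$ and varying $b$, I would show that the bad values of $b$ -- those for which $\Sigma_{(a,b)}$ fails to be transitive, fails to be synchronized, or for which the obstruction entropy reaches $h(\Sigma_{(a,b)})$ -- correspond to the expansion $\beta$ being non-generic, namely eventually periodic or satisfying a rigid renormalization or matching relation with $\alpha$. Since Lebesgue-a.e. real has a normal binary expansion, which is in particular aperiodic and contains every admissible word, the bad set in each slice is covered by a countable union of such combinatorial coincidences and hence is null; integrating over $a$ then shows $D_1 \setminus D_I$ is null. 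As an alternative route I would use the description of $(a,b)\mapsto h(\Sigma_{(a,b)})$ as a two-dimensional devil's staircase: on each plateau $\Sigma_{(a,b)}$ is semi-conjugate to a fixed transitive sofic system and is automatically intrinsically ergodic, so $D_I$ contains the union of plateaus, and it remains to show that the complementary bifurcation set has measure zero.

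The main obstacle I anticipate is this last measure-theoretic step. Proving that the bifurcation set -- equivalently, the set of parameters whose boundary itineraries are non-generic in the precise sense required -- is Lebesgue-null is delicate because this set typically has full Hausdorff dimension, so crude covering estimates do not suffice; one must exploit the monotonicity and transversality of the itinerary map together with the normality of binary expansions, and control the two-parameter interaction between $\alpha$ and $\beta$ rather than a single kneading sequence. A secondary difficulty is ruling out, for a positive-measure set of parameters, a splitting of $\Sigma_{(a,b)}$ into several entropy-maximal transitive components, since uniqueness of the measure of maximal entropy can fail exactly when two disjoint components share the maximal entropy; verifying transitivity of the core $\Lambda_{(a,b)}$ for a.e. $(a,b)$ is therefore an essential part of the argument.
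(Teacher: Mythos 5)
Your opening reduction (code everything symbolically, pass to the attractor's lexicographic subshift, note that a.e.\ $(a,b)\in D_1$ lies on a plateau where this subshift is of finite type) matches the paper's first step. But both of your routes then rest on a belief that is false for this family: that the obstruction to intrinsic ergodicity --- non-transitivity, or a splitting into several transitive components --- occurs only on a Lebesgue-null set of parameters, to be disposed of by a Fubini/normality argument. In fact non-transitivity is \emph{robust} here. Renormalisable pairs fill entire renormalisation boxes $\mathcal{B}^0(\omega,\nu)$, i.e.\ products $\left[\omega^{\infty},\omega\nu^{\infty}\right]_{\prec}\times\left[\nu\omega^{\infty},\nu^{\infty}\right]_{\prec}$ of lexicographic intervals, and these project onto rectangles of positive Lebesgue measure in the $(a,b)$-plane; for such parameters the attractor is a subshift of finite type with \emph{two} transitive components, $A=\Sigma_{(\sigma(\omega^{\infty}),\sigma(\nu^{\infty}))}$ and $B\subsetneq\left\{\omega,\nu\right\}^{\infty}$ (Lemma \ref{prepara1}). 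So your plateau claim (``on each plateau $\Sigma_{(a,b)}$ is semi-conjugate to a fixed \emph{transitive} sofic system and is automatically intrinsically ergodic'') is wrong, and your Fubini step collapses: the set of $b$ for which transitivity fails is an open, positive-measure condition, not a countable union of rigid coincidences. The normality heuristic misfires for a structural reason as well: the kneading pair governing $\Lambda_{(a,b)}$ is not $(\pi^{-1}(2a),\pi^{-1}(2b-1))$ but its admissible representative in $\mathcal{LW}$, and for a.e.\ $(a,b)$ that representative is \emph{periodic} even though the binary expansions of $a$ and $b$ are normal; genericity of the parameter's expansion says nothing about genericity of the collapsed kneading data. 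What you call a ``secondary difficulty'' --- several components sharing the maximal entropy --- and propose to rule out by proving a.e.\ transitivity is in fact the main difficulty, and it cannot be ruled out that way.

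What the paper supplies instead, and what any correct proof must supply, is intrinsic ergodicity of the non-transitive systems themselves: in every $0$-box the component $B$ has strictly smaller entropy than the essential component $A$ (Lemma \ref{prepara2} and Theorem \ref{lemota}, using the Glendinning--Hall entropy formula and the entropy of the renewal system $\left\{\omega,\nu\right\}^{\infty}$), so there is a unique transitive component of maximal entropy, and the unique measure of maximal entropy of that transitive SFT component extends to the unique one of the whole shift (Theorem \ref{ergodicidadintrinseca1}); the substitutions $\rho_r$ by balanced words then transport this picture to the $n$-boxes covering $D_1\setminus D_2$, with entropy scaling by $1/(q_1\cdots q_n)$ (Theorems \ref{lemadelaentropia}, \ref{lemotarenorm}, \ref{asiseacabadecubrir}). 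Your Climenhaga--Thompson route is not beyond repair, but it does not bypass this: in the two-component case no word of $B$ admits a bridge to a word of $A$, so all of $\mathcal{L}(B)$ must be absorbed into the obstruction collections $\mathcal{C}^p,\mathcal{C}^s$, and the entropy-gap hypothesis you would need is exactly $h_{top}(\sigma_B)<h_{top}(\sigma_A)$, i.e.\ the paper's key lemma. (Also, transitive coded or synchronized shifts are not automatically intrinsically ergodic, which is the very point of the Climenhaga--Thompson hypotheses.) Finally, note that the gap can genuinely fail: for $0\alpha=\omega\nu_k^{\infty}$, $1\beta=\nu_k\omega^{\infty}$ the two components share the maximal entropy and intrinsic ergodicity fails (Theorem \ref{canon1}); these eventually periodic pairs, not the renormalisable boxes, are the measure-zero exceptional set your argument should be isolating.
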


The main idea to prove Theorem \ref{elmerochingon} is to describe the topological entropy of the \textit{transitive components of $\left(\Sigma_{(\alpha,\beta)}, \sigma_{(\alpha,\beta)}\right)$}, where $\left(\Sigma_{(\alpha,\beta)}, \sigma_{(\alpha,\beta)}\right)$ is the \textit{lexicographic subshift} associated to $\left(\Lambda_{(a,b)}, f_{(a,b)}\right)$ - see \cite[Corollary 3.21]{yomero3}- in order to show the uniqueness of the measure of maximal entropy of $\left(\Lambda_{(a,b)}, f_{(a,b)}\right)$. To the best of the knowledge of the author, the notion of transitive component for an open dynamical system was introduced by Bundfuss, et.al. in \cite{bundfuss}. Also, we use the connection between open dynamical systems for the doubling map and Lorenz maps established in \cite{yomero3}, the entropy formula introduced by Glendinning and Hall in \cite{hall} and a suitable division of $D_1$ given by \textit{$n$-renormalisation boxes} - see Definitions \ref{0renormalisationbox} and \ref{nrenormalisationbox} to describe such measure for every $(a,b) \in D_1$. Renormalisation boxes are constructed via essential pair i.e. parameters $(\alpha, \beta)$corresponding to transitive subshifts of finite type and substitution systems depending on balanced words associated to the elements of $\mathbb{Q}\cap(0,1)$ - see Section \ref{basic}. Using substitution systems is a similar approach to the one used by Glendinning and Sidorov in \cite{sidorov1} to describe the set $D_1$.  

\vspace{1em}We also mention that there exists a countable family of non transitive attractors due to Hofbauer \cite{hofbauer1} and Glendinning and Hall \cite{hall} showing in Theorem \ref{canon1} that none of the elements of the mentioned family are intrinsically ergodic. Using this family of examples and substitutions systems again, we construct countably many attractors where intrinsic ergodicity does not hold in Theorem \ref{laconstruccion}. 

\section[Preliminaries]{Background}
\label{basic}

\noindent For the convenience of the reader, we give all the relevant concepts from Symbolic Dynamics and Ergodic Theory to develop our study.

\subsection*{Symbolic Dynamics}

\noindent For a full exposition in symbolic dynamics the reader can consult \cite{lindmarcus}. We will consider subshifts defined on the alphabet $\{0,1\}$ only and we will call its elements \textit{symbols} or \textit{digits}. Let $\Sigma_2 = \mathop{\prod}\limits_{n=1}^{\infty}\{0, 1\}$, i.e. $\Sigma_2$ is the set of all one sided sequences with symbols in $\{0,1\}$. Recall that $\Sigma_2$ is a compact metric space with the distance given by: $$d(x,y) = \left\{
\begin{array}{clrr}      
2^{-j} & \hbox{\rm{ if }}  x \neq y;& 
\hbox{\rm{where }} j = \min\{i : x_i \neq y_i \}\\
0 & \hbox{\rm{ otherwise.}}&\\
\end{array}
\right.$$ 

Recall that $\pi: \Sigma_2 \to \left[0,1\right]$ is the \textit{the projection map} given by $\pi(x) = \mathop{\sum}\limits_{i=1}^{\infty} \frac{x_i}{2^i}$. Let $\sigma: \Sigma_2 \to \Sigma_2$ given by $\sigma((x_i)_{i=1}^{\infty}) = (x_{i+1})_{i=1}^{\infty}$ \textit{the one-sided full shift map}. Let $A \subset \Sigma_2$. We say that $(A, \sigma_A)$ is a \textit{subshift of $\Sigma_2$} or simply \textit{a subshift} if $A$ is a closed and $\sigma$-invariant set, and $\sigma_A$ is defined by $\sigma_A = \sigma \mid_A$.  

\vspace{1em} A finite sequence of symbols $\omega = w_1,\ldots w_n$ with $w_i \in \{0,1\}$ will be referred as a word and given a word $\omega$ we denote the \textit{length of $\omega$} by $\ell(\omega)$. Note that given two finite words $\omega = w_1,\ldots w_n$ and $\nu = u_1 \ldots u_m$ their \textit{concatenation} denoted by $\omega\nu$ is given by $\omega\nu = w_1,\ldots w_n u_1 \ldots u_m$. We denote by $\omega^n$ the word $\omega$ concatenated to itself $n$ times. Given a sequence $x \in \Sigma_2$ and a word $\omega$ we say that \textit{$\omega$ is a factor of $x$} or \textit{$\omega$ occurs in $x$} if there are coordinates $i$ and $j$ such that $\omega = x_i \ldots x_j$. Note that the same definition holds if $x$ is a finite word. 

\vspace{1em}It is necessary for our exposition to use some tools from combinatorics of words -see \cite[Chapter 2]{lothaire} for a detailed exposition. For every finite word $\omega$ we denote by $0-\max_{\omega}$ to the lexicographically largest cyclic permutation of $\omega$ starting with $0$ and by $1-\min_{\omega}$ to the lexicographically smallest cyclic permutation starting with $1$. We denote by $\left|\omega \right|_1$ to the number of $1$'s of a finite word $\omega$ and define \textit{the $1$-ratio of $\omega$} to be $1(\omega) = \frac{\left|\omega\right|_1}{\ell(\omega)}$. A word $\omega$ is said to be \textit{balanced} if for any pair of factors of $\omega$, $\nu$ and $\upsilon$ of length $n$ with $2 \leq n \leq \ell(\omega)$, $\left|\left|\upsilon\right|_1 - \left|\nu\right|_1 \right| \leq 1,$ and we say that $\omega$ is \textit{cyclically balanced} if $\omega^2$ is balanced. Note that given a $r = \frac{p}{q} \in \mathbb{Q}\cap(0,1)$ there exist only $q$ distinct cyclically balanced words with 
length $q$ and $p$ $1$'s. For $r = \frac{p}{q} \in \mathbb{Q}\cap(0,1)$, $\xi_r$ stands for the lexicographically largest cyclically balanced word starting with $0$ with $\ell(\xi_r) =$ and $1(\xi_r) = r$. Also $\zeta_r$ stands for the lexicographically smallest cyclically balanced word starting with $1$ with $\ell(\zeta_r) = p$ and $1(\zeta_r) = r$. Note that $\xi_r = 0-\max_{\xi_r}$ and $\zeta_r = 1-\min_{\zeta_r}$ and that $\xi_r$ and $\zeta_r$ are cyclic permutations of each other.

\vspace{1em}Given two words $\omega$ and $\nu$ we define $\rho_{\omega,\nu}$ to be the substitution given by $\rho_{\omega, \nu}(0) = \omega$ and $\rho_{\omega,\nu}(1)$. Given a word $\upsilon$, $\rho_{\omega, \nu}(\upsilon) = \rho_{\omega, \nu}(u_1) \ldots \rho_{\omega, \nu}(u_{\ell(\upsilon)})$. If $\omega = \xi_r$ and $\nu = \zeta_r$ for $r \in \mathbb{Q}\cap(0,1)$, the substitution $\rho_{\xi_r, \zeta_r}$ will be denoted simply as $\rho_r$. 

\vspace{1em}Let $\mathcal{F}$ be a set of words and let $$\Sigma_{\mathcal{F}} = \left\{x \in \Sigma_2 : \upsilon \hbox{\rm{ is not a factor of }} x \hbox{\rm{ for any word }} \upsilon \in \mathcal{F}\right\}.$$ Since $\Sigma_{\mathcal{F}}$ is a closed and $\sigma$-invariant set, the dynamical system given by $(\Sigma_{\mathcal{F}}, \sigma\mid_{\Sigma_{\mathcal{F}}})$ is a subshift of $\Sigma_2$. Conversely, for every compact and $\sigma$-invariant set $A$, there always exist a set of forbidden factors $\mathcal{F}$ such that $A = \Sigma_{\mathcal{F}}$ \cite[Theorem 6.1.21]{bundfuss}. 

\vspace{1em}We say that a subshift $(\Sigma_{\mathcal{F}},\sigma\mid_{\Sigma_{\mathcal{F}}})$ is a \textit{subshift of finite type} if $\mathcal{F}$ is finite. We say that a subshift $(\Sigma_{\mathcal{F}},\sigma\mid_{\Sigma_{\mathcal{F}}})$ is \textit{sofic} if there is a subshift of finite type $(A, \sigma_{A})$ and a semi-conjugacy $h:A \to \Sigma_{\mathcal{F}}$. Given to finite words $\omega$, $\nu$ let $\left\{\omega, \nu\right\}^{\infty}$ denote the set of all concatenations of $\omega$ and $\nu$ together with their shifts. We call the dynamical system $\left(\left\{\omega, \nu\right\}^{\infty}, \sigma\mid_{\left\{\omega, \nu\right\}^{\infty}}\right)$ a \textit{one-sided uniquely decipherable renewal system} - see \cite{lind} for a general definition. 

\vspace{1em}Let $\alpha = (a_i)_{i=1}^{\infty}, \beta = (b_i)_{i=1}^{\infty} \in \Sigma_2$. We say that $\alpha$ is \textit{lexicographically less than} $\beta$, denoted by $\alpha \prec \beta$ if there exists $k \in \mathbb{N}$ such that $a_j = b_j$ for $j < k$ and $a_k < b_k$. We denote by $\left(\alpha, \beta\right)$ to the \textit{lexicographic open interval from $\alpha$ to $\beta$}, i.e. $\left(\alpha,\beta \right) = \left\{x \in \Sigma_2 : \alpha \prec x \prec \beta \right\}.$ Similarly, it is possible to consider \textit{lexicographic closed intervals} by replacing $\prec$ for $\preccurlyeq$. Given a sequence $x \in \Sigma_2$, \textit{the mirror image of $x$} is the sequence $\bar{x} = (1 - x_i)_{i=1}^{\infty}$. 

\vspace{1em}In particular, we are interested in the following family of subshifts. Let $(\alpha,\beta)\in \Sigma_2 \times \Sigma_2$. We define the \textit{lexicographic subshift corresponding to $(\alpha, \beta)$} by considering $$\Sigma_{(\alpha,\beta} = \left\{x \in \Sigma_2 : \beta \preccurlyeq \sigma^{n}(x) \preccurlyeq \alpha \hbox{\rm{ for every }} n \geq 0\right\}$$ and $\sigma_{(\alpha, \beta)} = \sigma\mid_{\Sigma_{(\alpha,\beta)}}$.

\vspace{1em}A sequence $\alpha \in \Sigma_2$ is said to be a \textit{Parry sequence} if $a_1 =1$ and $\sigma^n(\alpha) \preccurlyeq \alpha$ for every $n \in \mathbb{N}$. We denote the set of Parry sequences by $P$ and by $\bar {P} = \left\{x \in \Sigma_2 : \bar{x} \in P\right\}$. 

\begin{definition}
A pair $(\alpha, \beta)$ such that $\alpha \in P$ and $\beta \in \bar{P}$, and 
$$\sigma^{n}(\alpha) \succcurlyeq \beta \hbox{\rm{ and }} \sigma^{n}(\beta) \preccurlyeq \alpha \hbox{\rm{ for every }} n \in \mathbb{N}$$ is called an \textit{admissible pair}. Otherwise the pair $(\alpha,\beta)$ is said to be \textit{extremal}. The family of admissible is called \textit{the lexicographic world} and we will denote it as $\mathcal{LW}$ \label{lexworld}.
\end{definition}

\vspace{1em}As a consequence of \cite[Theorem 3.6]{nilsson}, $\pi(\mathcal{LW})$ has Lebesgue measure zero. Also, observe that if $(\alpha, \beta) \in \mathcal{LW}$ then neither $\alpha$ nor $\beta$ have arbitrarily long strings of $0$'s or $1$'s unless $\alpha = 1^{\infty}$ or $\beta = 0^{\infty}$ and $\Sigma_{(\alpha,\beta)} = \Sigma_2$ if and only if $\alpha = 1^{\infty}$ and $\beta = 0^{\infty}$. 

\vspace{1em}Given a lexicographic subshift $(\Sigma_{(\alpha,\beta)},\sigma_{(\alpha,\beta)})$, \textit{the set of admissible words of length $n$ of $(\Sigma_{(\alpha,\beta)}, \sigma_{(\alpha,\beta)})$} is denoted by $B_n(\Sigma_{(\alpha,\beta)})$ and the \textit{language of $\Sigma_{(\alpha, \beta)}$} is $$\mathcal{L}(\Sigma_{(\alpha,\beta)}) = \mathop{\bigcup}\limits_{n=1}^{\infty}B_n(\Sigma_{(\alpha,\beta)}).$$ We define the \textit{topological entropy of $(\Sigma_{(\alpha,\beta)}, \sigma_{(\alpha,\beta)})$} by $$h_{top}(\sigma_{(\alpha,\beta)}) = \mathop{\lim}\limits_{n \to \infty} \dfrac{1}{n}\log \left|B_n(\Sigma_{(\alpha,\beta)})\right|$$ where $\log$ is always considered to be $\log_2$. Given $(\alpha,\beta) \in \mathcal{LW}$, \textit{the entropy formula for $(\Sigma_{(\alpha,\beta)}, \sigma_{(\alpha,\beta)})$} by $$K_{(\alpha,\beta)}(t) = \mathop{\sum}\limits_{i=0}^{\infty}(b_i-a_i)t^{i}$$ where $a_0 = 0$ and $b_0 = 1$.  

\vspace{1em}The entropy formula was introduced in Glendinning and Hall \cite{hall} in the context of for Lorenz maps. Recently, Barnsley, Steiner and Vince in \cite{barnsley} give a symbolic proof showing that the smallest positive root of $K(t)$, denoted by $\kappa$, satisfies $$\kappa = \left(\mathop{\lim}\limits_{n\to \infty}\sqrt[n]{\lvert B_n(\Sigma_{(\alpha,\beta)})} \rvert \right)^{-1}.$$ Thus, $\log (\frac{1}{\kappa}) = h_{top}(\sigma_{(\alpha,\beta)})$. Note that given two words $\omega$ and $\nu$ we can calculate the topological entropy of $\left(\left\{\omega,\nu\right\}^{\infty}, \sigma \mid_{\left\{\omega,\nu\right\}^{\infty}}\right)$ in a similar way, namely $h_{top}(\sigma \mid_{\left\{\omega,\nu\right\}^{\infty}}) = \log(\frac{1}{\lambda})$ where $\lambda$ is the unique root of $1-t^{\ell(\omega)}-t^{\ell(\nu)}$ in $[0,1]$ \cite[Remark 2.1]{hong} \cite[p. 1008]{hall}. 

\vspace{1em}We say that lexicographic subshift $(\Sigma_{(\alpha,\beta)}, \sigma_{(\alpha,\beta)})$  
\begin{enumerate}[$i)$]
\item is \textit{topologically transitive} if for any two ordered words $\omega, \nu \in \mathcal{L}(\Sigma_{(\alpha,\beta)})$ there exist a word $\upsilon \in \mathcal{L}(\Sigma_{(\alpha,\beta)})$ (which we refer as \textit{bridge}) such that $\omega \upsilon \nu \in \mathcal{L}(\Sigma_{(\alpha,\beta)})$; 
\item is \textit{topologically mixing} if for every ordered pair of words $\upsilon, \nu \in \mathcal{L}(\Sigma_{(\alpha,\beta)})$, there exists $N \in \mathbb{N}$ such that for each $n \geq N$ there is a bridge $\omega \in B_n(\Sigma_{(\alpha,\beta)})$, such that $\upsilon \omega \nu \in \mathcal{L}(\Sigma_{(\alpha,\beta)})$;  
\item has \textit{the specification property} if $(\Sigma_{(\alpha,\beta)}, \sigma_{(\alpha,\beta)})$ is transitive and there exist $M \in \mathbb{N}$ such that for every $\omega, \nu$, $\ell(\upsilon) = M$;  
\item is \textit{coded} if there exist a sequence of transitive lexicographic subshifts of finite type $\left\{(\Sigma_{(\alpha_n,\beta_n)},\sigma_{(\alpha_n,\beta_n)})\right\}_{n=1}^{\infty}$ such that $\Sigma_{(\alpha_n,\beta_n)} \subset \Sigma_{(\alpha_{n+1},\beta_{n+1})}$ for every $n \in \mathbb{N}$ and $$\Sigma_{(\alpha,\beta)} = \overline{\mathop{\bigcup}\limits_{n=1}^{\infty} \Sigma_{(\alpha_n,\beta_n)}}.$$
\end{enumerate}

Note that one-sided uniquely decipherable renewal systems are transitive sofic subshifts.

\subsection*{Ergodic Theory} For a comprehensive presentation on Ergodic Theory we refer the reader to \cite{mane, walters}. Given a lexicographic subshift $(\Sigma_{(\alpha,\beta)}, \sigma_{(\alpha,\beta)})$, $\mathbb{P}(\Sigma_{(\alpha,\beta)})$ stands for the set of all probability measures defined on the Borel $\sigma$-algebra, $\mathcal{B}(\Sigma_{(\alpha,\beta)})$. A measure $\mu \in \mathbb{P}(\Sigma_{(\alpha,\beta)})$ is \textit{$\sigma_{(\alpha,\beta)}$-invariant} if for every $A \in \mathcal{B}(\Sigma_{(\alpha,\beta)})$, $\mu({\sigma_{(\alpha,\beta)}}^{-1}(A)) = \mu(A)$. We denote by $\mathbb{M}(\sigma_{(\alpha,\beta)})$ to the set of $\sigma_{(\alpha,\beta)}$-invariant measures on $\Sigma_{(\alpha,\beta)}$. 

\vspace{1em}Given a countable partition $\boldsymbol{\alpha}$ of $\Sigma_{(\alpha,\beta)}$ and $\mu \in \mathbb{M}(\sigma_{(\alpha,\beta)})$, the \textit{entropy of $\mu$ relative to the partition $\boldsymbol{\alpha}$} is $$h_{\mu}(\sigma_{(\alpha,\beta)}, \boldsymbol{\alpha}) = \mathop{\lim}\limits_{n \to \infty} \dfrac{1}{n} H\left(\mathop{\bigvee}\limits_{j=0}^{n-1}{\sigma_{(\alpha,\beta)}}^{-j}(\boldsymbol{\alpha})\right),$$ where $$H(\boldsymbol{\alpha}) = -\mathop{\sum}\limits_{A \in \boldsymbol{\alpha}}\mu(A)\log \mu(A),$$ and \textit{the measure theoretical entropy of $\sigma_{(\alpha,\beta)}$ with respect to $\mu$} or simply  \textit{the entropy of $\mu$} is $$h_{\mu}(\sigma_{(\alpha,\beta)}) = \sup\{h_{\mu}(\sigma_{(\alpha,\beta)},\boldsymbol{\alpha}) : \boldsymbol{\alpha} \hbox{\rm{ is a countable partition of }}\Sigma_{(\alpha,\beta)}\}.$$

\vspace{1em}There is a relationship between the topological entropy of a lexicographic subshift  $(\Sigma_{(\alpha,\beta)},\sigma_{(\alpha,\beta)})$ and the measure theoretical entropy of $\mu \in \mathbb{M}(\sigma_{(\alpha,\beta)})$ known as \textit{the variational principle} \cite[Theorem 8.6]{walters}, that is $$h_{top}(\sigma_{(\alpha,\beta)}) = \sup \{h_{\mu}(\sigma_{(\alpha,\beta)}) : \mu \in \mathbb{M}(\sigma_{(\alpha,\beta)})\}.$$ A $\sigma_{(\alpha,\beta)}$-invariant measure $\mu$ satisfying the variational principle is called \textit{a measure of maximal entropy}. We say that a lexicographic subshift $(\Sigma_{(\alpha,\beta)},\sigma_{(\alpha, \beta)})$ is \textit{intrinsically ergodic} if there is a unique measure of maximal entropy. Note that if $h_{top}(\sigma_{(\alpha, \beta)}) = 0$ it will not be intrinsically ergodic unless $(\Sigma_{(\alpha, \beta)},\sigma_{(\alpha, \beta)})$ is uniquely ergodic since $\mathbb{M}(\sigma_{(\alpha, \beta)}) = {\mathbb{M}}_{\max}(\sigma_{(\alpha, \beta)})$ where 
${\mathbb{M}}_{\max}(\sigma_{(\alpha, \beta)})$ is the set of measures of maximal entropy for $(\Sigma_{(\alpha, \beta)}, \sigma_{(\alpha, \beta)})$ \cite{walters}.

\vspace{1em}It is known that transitive (mixing) subshifts of finite type are intrinsically ergodic \cite{parry} as well as transitive sofic subshifts \cite{weiss1, weiss2} and subshifts with the specification property \cite{bowen1}. Recently, Climenhaga and Thompson in \cite{climenhaga} shown a criteria to determine when a coded system is intrinsically ergodic. Finally, Gurevi{\v{c}} in \cite{gurevich} gave a general criteria to determine the intrinsic ergodicity of a subshift.  It is worth mentioning that intrinsic ergodicity will not follow neither from the topological transitivity nor topological mixing of a subshift - see, e.g. \cite{gurevich, petersen}.

\section[$\beta$-expansions]{Open dynamical systems for non centred holes, $\beta$-expansions and intrinsic ergodicity}
\label{laprevia}

\noindent Consider $S^1 = \mathbb{R} / \mathbb{Z}$ and let $f:S^1 \to S^1$ be the doubling map $2x \mod 1$. Recall that given an interval $(a,b) \subset S^1$ the \textit{$(a,b)$-exceptional set} is given $$X_{(a,b)} = \left\{x \in S^1: f^n(x) \notin (a,b) \hbox{\rm{ for every }} n \geq 0\right\}$$ and $(X_{(a,b)}, f_{(a,b)})$ is the open dynamical system corresponding to $(a,b)$ where $f_{(a,b)} = f\mid_{X(a,b)}$. We say that an interval $(a,b) \subset S^1$ is a \textit{centred hole} if $\frac{1}{2} \in (a,b)$. Given a centred hole $(a,b)$, the \textit{attractor of $X_{a,b}$} is $\Lambda_{(a,b)} = [2b-1,2a]\cap X_{(a,b)}$.

\vspace{1em}Assume that $(a,b) \subset S^1$ satisfies that $a \in (\frac{1}{2},1)$and $b=1$. Then $$X_{(a,b)} = \left\{x \in S^1 : f^n(x) \leq a \hbox{\rm{ for every }} n \geq 0 \right\}.$$

\vspace{1em}Let $\beta \in (1,2)$ and $x \in \left[0,1\right]$. We say that a sequence $\{b_n\}_{n=1}^{\infty} \in \Sigma_2$ is \textit{a $\beta$-expansion for $x$} if $\{b_n\}_{n=1}^{\infty}$ satisfies $$x = \mathop{\sum}\limits_{n=1}^{\infty}b_n \beta^{-n}.$$ $\beta$-expansions were introduced Parry and R\'enyi in \cite{parry2, renyi} as a generalisation of the expansions with integer basis. To find one of such expansions for $x \in [0,1]$ an algorithm is provided by the \textit{$\beta$-transformation}. Given $\beta \in (1,2)$, the \textit{$\beta$-trans\-for\-ma\-tion} is the transformation $\tau_{\beta}:[0,1] \to [0,1]$ given by $\tau_{\beta}(x) = \beta x \mod 1$. Note that if $\beta = 2$ then  $\tau_{\beta} = 2x \mod 1$. Using $\tau_{\beta}$, we construct a $\beta$-expansion for $x \in [0,1)$ by $b_n = [\beta \tau_{\beta}^{n-1}(x)]$ for $n \in \mathbb{N}$. The obtained $\beta$-expansion for $x$ is known as \textit{the greedy expansion of $x$}.

\vspace{1em}Let $X_{\beta} \subset \Sigma_2$ be the set of all greedy expansions corresponding to $\beta$. Then, $\sigma_{\beta}: X_{\beta} \to X_{\beta}$ by $\sigma_{\beta} = \sigma\mid_{X_{\beta}}$is a subshift. We shall call the subshift $(X_{\beta}, \sigma_{\beta})$ \textit{the usual $\beta$-shift}. The properties of the usual $\beta$-shift have been extensively studied. In particular, for every $\beta \in (1,\infty)$, the usual $\beta$-shift is a topologically mixing subshift and $h_{top}(\sigma_{\beta}) = \log (\beta)$ \cite{sidorov4}. It is shown in \cite{sidorov4} that $(X_{\beta}, \sigma_{\beta})$ is topologically conjugated to $([0,1], \tau_{\beta})$. Moreover, Parry in \cite{parry2} showed that $$X_{\beta} = \left\{x \in \Sigma_2 : \sigma^n(x) \prec 1_{\beta} \hbox{\rm{ for every }} n \in \mathbb{N}\right\},$$ where $1_{\beta} = \{d_i\}_{i=1}^{\infty}$ is the greedy expansion of $1$ if $0^{\infty}$ is not a factor of $1_{\beta}$ is not a finite sequence and $1_{\beta} = (d_1,\ldots d_{k-1}0)^{\infty}$ if $0^{\infty}$ is a factor of $\{d_i\}_{i=1}^{\infty}$ and $k$ satisfies that for every $i > k$ $d_j = 0$. The described expansion is called \textit{quasi-greedy expansion of $1$}. Moreover, Parry in \cite{parry2} characterised lexicographically the set of sequences that are greedy $\beta$-expansions of $1$ is, namely, $\{d_i\}_{i=1}^{\infty}$ is a greedy expansion of $1$ if and only if $\{d_i\}_{i=1}^{\infty}$ is a Parry sequence. 

\vspace{1em}Note that $(X_{\beta}, \sigma_{\beta})$ is a subshift of finite type if and only $1_{\beta}$ is periodic or finite and $(X_{\beta}, \sigma_{\beta})$ is sofic if and only $1_{\beta}$ is preperiodic \cite[Theorem 2.2]{sidorov4}. In addition, Bertrand-Mathis in \cite{bertrandmathis} shown that $(X_{\beta}, \sigma_{\beta})$ has the specification property if and only $1_{\beta}$ does not contain blocks of consecutive $0$'s of arbitrary length. Finally, it is known that the usual $\beta$-shift is intrinsically ergodic for every $\beta$ \cite{hofbauer}. 

\vspace{1em}Assume that $\alpha$ satisfies that $a_1 = 1$ and $\alpha \notin P$. Then we define $$\varsigma(\alpha) = (a_1 \ldots a_{n_{\alpha-1}}0)^{\infty}$$ where $$n_{\alpha} = \min\left\{n \in \mathbb{N} : \sigma^n(\alpha) \succcurlyeq \alpha \right\}$$ - see \cite[p. 11]{yomero3}. Note that $\varsigma(\alpha) \in P$. From \cite[Lemma 3.12]{yomero3} we are sure that $\Sigma_{(0^{\infty}, \alpha)} = \Sigma_{(0^{\infty}, \varsigma(\alpha))}.$ Also, it is clear that there is $\beta = \beta(a) \in \Sigma_{2}$ satisfying that $\Sigma_{(0^{\infty}, \alpha)} = X_{\beta(a)}$. In \cite[Theorem 3.5]{bundfuss}, Bundfuss et. al. showed that $(X_{\beta}, \sigma_{\beta})$ is topologically conjugated to $(X_{(a,1)}, f_{(a,1)})$ with $a \in (\frac{1}{2}, 1)$.  Also, Nilsson in \cite{nilsson} proved the same result for open dynamical systems of the form $(X_{(0,b)}, f_{(0,b)})$ with $b \in (0, \frac{1}{2})$. As a consequence of \cite[Theorem A]{climenhaga} we have the following proposition: 

\begin{proposition}
The open dynamical systems $(X_{(a,1)}, f_{(a,1)})$ and $(X_{(0,b)}, f_{(0,b)})$ are intrinsically ergodic provided that $a \in \left(\frac{1}{2}, 1\right)$ and $b \in \left(0, \frac{1}{2}\right)$. \label{proposicionchafa} 
\end{proposition}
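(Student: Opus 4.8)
The plan is to reduce both families to a single well-studied model---the usual $\beta$-shift---and then to invoke the Climenhaga--Thompson criterion \cite[Theorem A]{climenhaga}. For a hole $(a,1)$ with $a\in(\tfrac12,1)$ the excerpt already records, via \cite[Theorem 3.5]{bundfuss} together with the identification $\Sigma_{(0^{\infty},\alpha)}=X_{\beta(a)}$, that $(X_{(a,1)},f_{(a,1)})$ is topologically conjugate to $(X_{\beta},\sigma_{\beta})$ for the value $\beta=\beta(a)$ produced from the (quasi-)greedy expansion of $a$. For a hole $(0,b)$ with $b\in(0,\tfrac12)$, Nilsson's theorem \cite{nilsson} gives the analogous conjugacy; concretely the mirror-image involution $x\mapsto\bar x$ sends the lexicographic description of $X_{(0,b)}$ to one of the form $\Sigma_{(0^{\infty},\alpha')}$, hence again to a usual $\beta$-shift. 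Thus in both cases the question is conjugate to deciding whether a single $(X_{\beta},\sigma_{\beta})$ is intrinsically ergodic.

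Next I would record that intrinsic ergodicity is a topological-conjugacy invariant. A conjugacy $h$ is a homeomorphism intertwining the two shifts, so $\mu\mapsto h_{*}\mu$ is an affine bijection between the invariant-measure simplices which preserves measure-theoretic entropy, while $h$ preserves topological entropy; hence $h$ carries the set of measures of maximal entropy of one system bijectively onto that of the other, and in particular uniqueness is transported along $h$. It therefore suffices to show that every usual $\beta$-shift is intrinsically ergodic.

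Finally I would apply \cite[Theorem A]{climenhaga}. The usual $\beta$-shift is coded, and Parry's characterisation $X_{\beta}=\{x:\sigma^{n}(x)\prec 1_{\beta}\ \text{for all }n\}$ \cite{parry2} supplies the language decomposition $\mathcal{L}(X_{\beta})=\mathcal{C}^{p}\,\mathcal{G}\,\mathcal{C}^{s}$ required by that theorem: the good collection $\mathcal{G}$ consists of the words all of whose shifts stay strictly below $1_{\beta}$, and these concatenate freely and so enjoy the specification property, while the obstruction collections $\mathcal{C}^{p},\mathcal{C}^{s}$ consist of prefixes/suffixes agreeing with an initial segment of $1_{\beta}$. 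The main step, and the one I expect to be the real obstacle, is the entropy-gap (pressure-gap) hypothesis of \cite[Theorem A]{climenhaga}, namely $h(\mathcal{C}^{p}),h(\mathcal{C}^{s})<h_{top}(\sigma_{\beta})=\log\beta$. For $\beta$-shifts this is exactly where the structure of $1_{\beta}$ is used: since each obstruction word is an initial segment of the single sequence $1_{\beta}$, there is at most one prefix of each length, so $h(\mathcal{C}^{p})=h(\mathcal{C}^{s})=0$, which lies strictly below $\log\beta>0$. With the gap in hand, Theorem A yields a unique measure of maximal entropy for $(X_{\beta},\sigma_{\beta})$, and the conjugacy invariance of the previous paragraph transports intrinsic ergodicity back to $(X_{(a,1)},f_{(a,1)})$ and $(X_{(0,b)},f_{(0,b)})$. (The same conclusion is of course available from Hofbauer's classical result \cite{hofbauer}; framing it through \cite{climenhaga} is natural here because the same decomposition technique underlies the paper's main theorem.)
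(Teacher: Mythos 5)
Your proposal takes essentially the same route as the paper: the paper likewise reduces $(X_{(a,1)},f_{(a,1)})$ and $(X_{(0,b)},f_{(0,b)})$ to usual $\beta$-shifts via \cite[Theorem 3.5]{bundfuss} and Nilsson's result \cite{nilsson}, and then deduces intrinsic ergodicity directly from \cite[Theorem A]{climenhaga} (with Hofbauer's theorem \cite{hofbauer} available as the classical alternative). The extra details you supply---conjugacy invariance of intrinsic ergodicity and the explicit language decomposition with zero-entropy obstruction collections for the $\beta$-shift---are correct and are simply left implicit in the paper's citation of Theorem A.
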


Now we consider another class of non-centred holes, namely intervals of the form $\left(a, \frac{1}{2}\right)$. Observe that if $a \leq \frac{1}{2}$ then by \cite[Lemma 1.1]{sidorov1} we are sure that $X_{(a, \frac{1}{2})} = \left\{0\right\}$, then we will restrict ourselves to $a \in (\frac{1}{4}, \frac{1}{2})$. 

Let $a \in (\frac{1}{4}, \frac{1}{2})$ and fix the binary expansion of $\frac{1}{2}$ to be $10^{\infty}$. Observe that the attractor of $(X_{(a,\frac{1}{2})}, f_{(a, \frac{1}{2}})$, $\Lambda_{(a, \frac{1}{2})} = X_{(a, \frac{1}{2})} \cap [0, 2a]$. Let $\alpha = \pi^{-1}(2a)$ and consider $\Sigma_{(\varsigma(\alpha),0^{\infty})}$. Recall that if $\alpha \in P$ then $\Sigma_{(\alpha,0^{\infty})}$ coincides with the greedy $\beta(\alpha)$-shift up to a countable set of sequences given by $$\left(\mathop{\bigcup}\limits_{n=0}^{\infty} \sigma^{-n}(0) \right) \cap \Sigma_{(\alpha, 0^{\infty})}$$ and $\beta(\alpha)$ is determined by the unique positive solution to the equation $$1 = \mathop{\sum}\limits_{n=1}^{\infty}\dfrac{a_i}{\beta(\alpha)^i}.$$

\begin{lemma}
For every $a \in (\frac{1}{4}, \frac{1}{2})$, there is $\beta \in (1,2)$ such that the dynamical systems $(\Lambda_{(a, \frac{1}{2})}, f_{(a, \frac{1}{2})})$ and $(\Sigma_{\beta}, \sigma_{\beta})$ are topologically conjugated.\label{lemabeta}
\end{lemma}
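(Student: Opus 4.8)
The plan is to realise $(\Lambda_{(a,\frac{1}{2})}, f_{(a,\frac{1}{2})})$ as a lexicographic subshift with lower boundary sequence $0^{\infty}$ and a \emph{Parry} upper boundary sequence, and then to identify such a subshift with a usual $\beta$-shift $(\Sigma_{\beta}, \sigma_{\beta}) = (X_{\beta},\sigma_{\beta})$ through Parry's lexicographic characterisation. Throughout I fix $\alpha = \pi^{-1}(2a)$; since $a \in (\frac{1}{4},\frac{1}{2})$ we have $2a \in (\frac{1}{2},1)$, so $\alpha = 1 a_2 a_3 \cdots$ begins with the digit $1$ and satisfies $\alpha \neq 1^{\infty}$ and $\alpha \neq 10^{\infty}$.

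\emph{Symbolic coding.} First I would record that $[0,a]\cup[\frac{1}{2},2a] = [0,2a]\setminus(a,\frac{1}{2})$ is forward invariant under $f$: if $x\in[0,a]$ then $f(x)=2x\in[0,2a]$, while if $x\in[\frac{1}{2},2a]$ then $f(x)=2x-1\in[0,4a-1]\subseteq[0,2a]$ because $a<\frac{1}{2}$ (here $\frac{1}{2}\le 2a$ uses $a>\frac{1}{4}$). Hence any $x\in[0,2a]$ whose orbit avoids the hole never leaves $[0,2a]$, so $\Lambda_{(a,\frac{1}{2})}=\{x\in[0,2a]:f^{n}(x)\notin(a,\frac{1}{2})\text{ for all }n\ge0\}$ and $f_{(a,\frac{1}{2})}$ is the restriction of the shift under the binary coding $\pi$. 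Translating into expansions, a point $y\le 2a$ lies in $(a,\frac{1}{2})$ precisely when $y_{1}=0$ and $\sigma(\pi^{-1}(y))\succ\alpha$ (indeed $\pi^{-1}(a)=0\alpha$). Therefore the single requirement $\sigma^{n}(\pi^{-1}(x))\preccurlyeq\alpha$ for every $n\ge0$ simultaneously forces $x\le2a$ (take $n=0$) and forbids the orbit from entering the hole (apply the description at index $n+1$), and conversely $x\in\Lambda_{(a,\frac{1}{2})}$ gives $f^{n}(x)\le2a$, i.e. $\sigma^{n}(\pi^{-1}(x))\preccurlyeq\alpha$, for all $n$. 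This identifies $(\Lambda_{(a,\frac{1}{2})},f_{(a,\frac{1}{2})})$ with the lexicographic subshift $(\Sigma_{(\alpha,0^{\infty})},\sigma_{(\alpha,0^{\infty})})$, which is exactly the specialisation $b=\frac{1}{2}$, $2b-1=0$ of the coding of $(\Lambda_{(a,b)},f_{(a,b)})$ from \cite[Corollary 3.21]{yomero3}.

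\emph{Reduction to a Parry bound and a $\beta$-shift.} If $\alpha\in P$ put $\gamma=\alpha$; otherwise put $\gamma=\varsigma(\alpha)$, which is legitimate since $a_{1}=1$, and recall $\gamma\in P$. In either case \cite[Lemma 3.12]{yomero3} gives $\Sigma_{(\alpha,0^{\infty})}=\Sigma_{(\gamma,0^{\infty})}$, so the upper boundary may be taken to be the Parry sequence $\gamma$. By Parry's theorem there is a unique $\beta=\beta(\gamma)$, determined by $1=\sum_{i\ge1}\gamma_{i}\beta^{-i}$, whose quasi-greedy expansion of $1$ is $1_{\beta}=\gamma$; the exclusions $\gamma\neq1^{\infty}$ and $\gamma\neq10^{\infty}$ inherited from $a\in(\frac{1}{4},\frac{1}{2})$ give $\beta\in(1,2)$. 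Parry's lexicographic description of the $\beta$-shift then yields $\Sigma_{(\gamma,0^{\infty})}=\{x:\sigma^{n}(x)\preccurlyeq1_{\beta}\text{ for all }n\}=\Sigma_{\beta}$, completing the chain of conjugacies $(\Lambda_{(a,\frac{1}{2})},f_{(a,\frac{1}{2})})\cong(\Sigma_{\beta},\sigma_{\beta})$.

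\emph{Main obstacle.} The delicate point is not the combinatorics of the boundary sequences but turning the binary coding into a genuine homeomorphism. The projection $\pi$ fails to be injective exactly on the countably many sequences ending in $0^{\infty}$ or $1^{\infty}$, and the greedy $\beta$-shift differs from the closed subshift $\Sigma_{(\gamma,0^{\infty})}$ precisely on the countable set $\left(\bigcup_{n\ge0}\sigma^{-n}(0^{\infty})\right)\cap\Sigma_{(\gamma,0^{\infty})}$. I would deal with this in the standard way: work with the closed lexicographic subshift (equivalently the quasi-greedy version of the $\beta$-shift), fix one expansion at each dyadic point, and check that the resulting coding is a continuous bijection onto $\Sigma_{(\alpha,0^{\infty})}$ whose inverse is automatically continuous by compactness. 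The orbit of the endpoint $x=2a$ requires separate attention, which is precisely the role of the passage from $\alpha$ to $\varsigma(\alpha)$; verifying that this passage neither creates nor destroys points of the attractor and respects the dynamics is where most of the care is needed.
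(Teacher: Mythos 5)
Your proposal is correct and takes essentially the same route as the paper: fix $\alpha = \pi^{-1}(2a)$, split on whether $\alpha \in P$, pass to the Parry sequence $\varsigma(\alpha)$ otherwise, and identify the coded attractor with the $\beta$-shift for $\beta = \beta(\varsigma(\alpha))$. The only difference is organisational --- where you invoke \cite[Lemma 3.12]{yomero3} (which the paper itself quotes immediately before the lemma) together with Parry's characterisation, the paper establishes $\pi^{-1}(\Lambda_{(a,\frac{1}{2})}) = X_{\beta(\varsigma(\alpha))}$ by a direct double-inclusion contradiction argument; your explicit treatment of the forward invariance of $[0,a]\cup[\frac{1}{2},2a]$ and of the dyadic (countable-set) identification issues is, if anything, more careful than the paper's.
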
 
\begin{proof}
Note that if $\alpha \in P$ then $\pi^{-1}(\Lambda_{(a,\frac{1}{2})}) =  X_{\beta(\alpha)}$. Therefore our result holds. 

Consider $a \in (\frac{1}{4}, \frac{1}{2})$ such that $\alpha \notin P$. Observe that $\varsigma(\alpha) \prec \pi^{-1}(2a)$, then $X_{\beta(\varsigma(\alpha)}) \subset \pi^{-1}(\Lambda_{(a,\frac{1}{2})})$. Assume that there exists $x \in \pi^{-1}(\Lambda_{(a,\frac{1}{2})} \setminus X_{\beta(\varsigma(\alpha)})$. This implies that $x_i = \varsigma(\alpha)_i$ at least for every $1 \leq i \leq n_{\alpha}$. Let $k = \min \{j \geq n_{\alpha} : \varsigma(\alpha)_j < x_j\}$. Observe that $x_k \leq a_j$. Since $x \in \pi^{-1}(X_{(a,\frac{1}{2})})$ then $\sigma^n(x) \prec \alpha$ for every $n \geq 0$. Consider $\sigma^{n_{\alpha}}(x)$. Note that $\sigma^{n_{\alpha}}(x)_{j-n_{\alpha}} \succ \varsigma(\alpha)_{j-n_{\alpha}}$. This implies that $\sigma^{n_{\alpha}}(x)_{j-n_{\alpha}} > \alpha_{j-n_a}$, that is $x \notin \pi^{-1}(\Lambda_{(a,\frac{1}{2})})$, which is a contradiction. Therefore $\pi^{-1}(\Lambda_{(a,\frac{1}{2})}) = X_{\beta(\varsigma(\alpha))}$. Then $\pi^{-1} \circ \pi_{\beta(\varsigma(\alpha))}$ is a topological conjugation between $(\Lambda_{\left(a,\frac{1}{2}\right)}, f_{(a,\frac{1}{2})})$ and $\left(X_{\beta(\varsigma(\alpha))}, \sigma_{\beta(\varsigma(\alpha))} \right)$. 
\end{proof}

Observe that Lemma \ref{lemabeta} combined with \cite[Theorem A]{climenhaga} gives us that $(\Lambda_{(a, \frac{1}{2})}, f_{(a, \frac{1}{2})})$ is intrinsically ergodic for every $a \in (\frac{1}{4}, \frac{1}{2})$.

\vspace{1em}To the best of our knowledge, determining when $(X_{(a,b)},f_{(a,b)})$ or $(\Lambda_{(a,b)},f_{(a,b)})$ are intrinsically ergodic provided that either $(a,b) \subset \left(0,\frac{1}{2}\right)$ or $(a,b) \subset \left(\frac{1}{2},1\right)$ is an open problem.

\section{Intrinsic ergodicity for centred holes and the lexicographic world}
\label{intrinsicergodicity}

\noindent During the rest of the paper, we will consider centred holes $(a,b)$ such that $\dim_{H}(X_{(a,b)}) > 0$ only -see \cite[Lemma 1.1, Theorem 2.13]{sidorov1}-. Recall that $$D_1 = \left\{(a,b) \in \left(\frac{1}{4}, \frac{1}{2}\right) \times \left(\frac{1}{2}, \frac{3}{4}\right): \dim_{H}(X_{(a,b)}) > 0 \right\}.$$ We ask that $(a,b) \in D_1$ since $$\dim_{H}(X_{(a,b)}) = \dfrac{h_{top}(f_{(a,b)})}{\lambda}$$ where $\lambda$ is the Lyapunov exponent of $2x \mod 1$ and since systems with zero topological entropy are not intrinsically ergodic unless they are uniquely ergodic -see \cite{walters}. Note that $\dim_{H}(X_{(a,b)}) = \dim_{H}(\Lambda_{(a,b)})$. Also, from \cite[Corollary 3.21]{yomero3} we know that for every $(a,b) \in D_1$ there exist $(\alpha,\beta) \in \mathcal{LW}$ such that $(\Lambda_{(a,b)}, f_{(a,b)})$ is topologically conjugated to $(\Sigma_{(\alpha,\beta)}, \sigma_{(\alpha,\beta)})$. Also, in Hare and Sidorov \cite{sidorov6} defined the set $$D_2 = \left\{(a,b) \in D_1 : Bad(
a,b) \hbox{\rm{ is finite }} \right\}$$ where $$Bad(a,b) = \left\{n \geq 3 : f^n(x) \in (a,b) \hbox{\rm{ for }} x \in Per(f) \right\}.$$ It was shown in \cite{sidorov6} that $D_2 \subsetneq D_1$. 

\subsection*{Open dynamical systems and Lorenz maps}

Recall that a map $g:[0,1] \to [0,1]$ is said to be a \textit{Lorenz map} if there exists $c \in (0,1)$ such that:
\begin{enumerate}[i)]
\item $g\mid_{[0,c)}$ and $g\mid_{(c,1]}$ are continuous and strictly increasing;
\item $\mathop{\lim}\limits_{x^+ \to c}g(x) = 1$ and $\mathop{\lim}\limits_{x^- \to c}g(x) = 0$;
\item $\overline{I_c} = [0,1]$ where $I_c = \mathop{\bigcup}\limits_{n=0}^{\infty}g^{-n}(c)$.
\end{enumerate}

\vspace{1em} As such, the properties of expanding Lorenz maps been extensively studied - see e.g. \cite{glendinning, glendinning1, hubbardsparrow} among others. Also in \cite[6.2]{glendinning1} it was stated that Lorenz maps can be studied as open dynamical systems of the doubling map. Moreover, we associate to such dynamical systems a symbolic space via \textit{kneading theory} - see \cite{hall} for a suitable introduction to our context - which coincides with the lexicographic world. Parry in \cite{parry3} introduced the following linear and expanding Lorenz maps known as \textit{$\mod 1$ transformations}. Consider $\tilde{\beta} \in (1,2)$ and $\tilde{\alpha} \in (0,2-\beta)$. Define $$g_{\tilde{\beta},\tilde{\alpha}}(x) = \left\{
\begin{array}{clrr}      
\tilde{\beta} x + \tilde{\alpha}, & \hbox{\rm{ if }}  x \in [0, \frac{1 -\tilde{\alpha}}{\tilde{\beta}}];\\
\tilde{\beta} x + \tilde{\alpha} - 1 & \hbox{\rm{ if }} x \in [\frac{1 -\tilde{\alpha}}{\tilde{\beta}}, 1].\\
\end{array}
\right.$$ 
Thus, for every $\mod 1$ transformation $g_{(\alpha, \beta)}$, there is a centred hole $(a,b)$ such that $\left([0,1],g_{(\tilde{\beta}, \tilde{\alpha})}\right)$ is a factor of $\left(\Lambda_{(a,b)}, f_{(a,b)}\right)$ \cite[Proposition 4.1, Proposition 4.2.]{yomero3}. Also, the factor map is given by $\pi_{\tilde{\beta},\tilde{\alpha}}\circ \pi^{-1}$ where $\pi_{\tilde{\beta},\tilde{\alpha}}: \Sigma_{(\alpha,\beta)} \to [0,1]$ given by $\pi_{\tilde{\beta}, \tilde{\alpha}}((x_i)_{i=1}^{\infty}) = \frac{\tilde{\alpha}}{\tilde{\beta}-1} + \mathop{\sum}\limits_{n=1}^{\infty}\frac{x_n}{\tilde{\beta}^n}$.
 
\vspace{1em}Urbanski in \cite[Lemma 1]{urbanski2} showed that the projection map $\pi_{\tilde{\beta},\tilde{\alpha}}$ is also a measure theoretic isomorphism between $\mathbb{M}(\sigma_{(\alpha,\beta)})$ and $\mathbb{M}(g_{\beta, \alpha})$. On the other hand, Hobfbauer in \cite{hofbauer2, hofbauer1} it is shown that $(g_{\tilde{\beta}, \tilde{\alpha}})$ is intrinsically ergodic if $([0,1], g_{\tilde{\beta}, \tilde{\alpha}})$ is topologically transitive. Topologically transitive attractors $(\Lambda_{(a,b)}, f_{(a,b)})$ as well as dynamical systems of the form $([0,1], g_{\beta, \alpha})$ are characterised using \textit{renormalisation}. A pair $(\alpha,\beta) \in \mathcal{LW}$ is said to be \textit{renormalisable} if there exist two words $\omega$ and $\nu$ and sequences $\{n^{\omega}_i\}_{i=1}^{\infty}$ $\{n^{\nu}_i\}_{i=1}^{\infty}$, $\{m^{\omega}_j\}_{j=1}^{\infty}$ and $\{m^{\nu}_j\}_{j=1}^{\infty} \subset \mathbb{N}\cup \{\infty\}$ such that $\omega = 0-\max_{\omega}$, $\nu = 1-\min_{\nu}$, $(0-\max_{\nu})^{\infty} \prec \omega^{\infty}$, $\nu^{\infty}\prec(1-\min_{\omega})^{\infty}$ ,$\ell(\omega\nu) \geq 3$ and $$0\alpha = \omega \nu^{n^{\nu}_1}\omega^{n^{\omega}_1}\nu^{n^{\nu}_2}\omega^{n^{\omega}_2}\nu^{n^{\nu}_3}\ldots$$ and $$1\beta = \nu \omega^{m^{\omega}_1}\nu^{m^{\nu}_1}\omega^{m^{\omega}_2}\nu^{m^{\nu}_2}\omega^{n^{\omega}_3}\ldots .$$ If $\ell(\omega \nu) = 3$ we say that $(\alpha,\beta)$ is trivially renormalisable. The pair $(\omega, \nu)$ is called \textit{the associated pair of $(\alpha, \beta)$}. We will always consider the shortest choice of $(\omega, \nu)$ with respect to $\ell(\omega)$ and $\ell(\nu)$. It is well known that a Lorenz map $([0,1], g_{\beta, \alpha})$ is transitive if and only if $(\alpha, \beta)$ is not renormalisable \cite[Theorem 2]{glendinning}. Recently, the author shown in \cite[Theorem 5.16,Theorem 5.22]{yomero3} that for $(a,b) \in D_2$, $(\Lambda_{(a,b)}, f_{(a,b)})$ is transitive if and only if $(\alpha, \beta)$ is not renormalisable and that the shift given by 
$\alpha = \omega\nu^{\infty}$ and $\beta = \nu\omega^{\infty}$ is transitive if $\omega = \xi_r$ and $\nu = \zeta_r$ for $r \in \mathbb{Q}\cap(0,1)$ \cite[Theorem 5.10]{yomero3}. 

\subsection*{Intrinsic Ergodicity in $D_2$}We now start describing the pairs $(a,b) \in D_1$ such that $(\Lambda_{(a,b)},f_{(a,b)})$ is intrinsically ergodic. We would like to recall our main theorem.

\begin{theorem}
The set $$D_{I} = \left\{(a,b) \in D_1 : (\Lambda_{(a,b)},f_{(a,b)}) \hbox{\rm{ is intrinsically ergodic }} \right\}$$ has full Lebesgue measure. \label{elmerochingon}
\end{theorem}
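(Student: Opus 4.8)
The plan is to work entirely on the symbolic side. By \cite[Corollary 3.21]{yomero3} every $(a,b) \in D_1$ yields a pair $(\alpha,\beta) \in \mathcal{LW}$ with $(\Lambda_{(a,b)}, f_{(a,b)})$ topologically conjugate to $(\Sigma_{(\alpha,\beta)}, \sigma_{(\alpha,\beta)})$, and since intrinsic ergodicity is a conjugacy invariant it suffices to decide it for the lexicographic subshift. The governing dichotomy is renormalisability. If $(\alpha,\beta)$ is not renormalisable then, by \cite[Theorem 5.16, Theorem 5.22]{yomero3}, the attractor is topologically transitive; the associated $\mathrm{mod}\ 1$ transformation $g_{\tilde{\beta},\tilde{\alpha}}$ is then transitive as well, hence intrinsically ergodic by Hofbauer \cite{hofbauer2, hofbauer1}, and by the measure-theoretic isomorphism $\pi_{\tilde{\beta},\tilde{\alpha}}$ of Urbanski \cite[Lemma 1]{urbanski2} this transfers back to $(\Sigma_{(\alpha,\beta)}, \sigma_{(\alpha,\beta)})$. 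Thus every non-renormalisable pair already lies in $D_I$, and the whole problem is concentrated on the renormalisable parameters.

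For the renormalisable parameters I would use the division of $D_1$ into $n$-renormalisation boxes (Definitions \ref{0renormalisationbox}, \ref{nrenormalisationbox}). On a $0$-renormalisation box the pair is an essential pair, so $(\Sigma_{(\alpha,\beta)}, \sigma_{(\alpha,\beta)})$ is a transitive subshift of finite type and hence intrinsically ergodic by Parry \cite{parry}. Given a parameter in an $n$-renormalisation box with associated pair $(\omega,\nu) = (\xi_r,\zeta_r)$, the substitution $\rho_r$ conjugates $(\Sigma_{(\alpha,\beta)}, \sigma_{(\alpha,\beta)})$, away from the renewal dynamics of $\{\xi_r,\zeta_r\}^{\infty}$, to a rescaled copy of a pair that is renormalised one fewer time. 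The key step is then to analyse the transitive components of $(\Sigma_{(\alpha,\beta)}, \sigma_{(\alpha,\beta)})$: the renormalised core $\rho_r(\Sigma_{(\alpha',\beta')})$ is one component and the finitely many ``gap'' pieces produced by the renewal system are the others. Using the entropy formula $K_{(\alpha,\beta)}$ of Glendinning and Hall, together with the way the smallest root $\kappa$ rescales under $\rho_r$ (via the root of $1 - t^{\ell(\xi_r)} - t^{\ell(\zeta_r)}$), one computes the topological entropy of each component explicitly.

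Intrinsic ergodicity now reduces to a comparison of these finitely many entropies: every measure of maximal entropy is supported on the union of the components of maximal entropy, so $(\Sigma_{(\alpha,\beta)}, \sigma_{(\alpha,\beta)})$ is intrinsically ergodic as soon as there is a \emph{unique} component of maximal entropy and that component is itself intrinsically ergodic. By induction on the renormalisation depth the unique top-entropy component is either a transitive subshift of finite type (coming from a $0$-box) or, after peeling off one further substitution, reduces to a strictly smaller box, so intrinsic ergodicity propagates upward. Consequently the exceptional set $D_1 \setminus D_I$ is contained in the union of the parameters where two distinct components achieve the maximal entropy simultaneously and the infinitely renormalisable parameters, on which the box decomposition never terminates.

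The hard part is proving that this exceptional set is Lebesgue-null. For the coincidence locus I would show that, within each box and in the natural local coordinates, the entropy of each component is a non-constant real-analytic function of $(a,b)$ through $\kappa$, so that the set where two fixed components have equal entropy is the zero set of a non-trivial analytic function, hence null; since there are only countably many boxes and finitely many components per box, the total coincidence set is still null. For the infinitely renormalisable set I would estimate the sizes of the $n$-renormalisation boxes and show that they exhaust the renormalisable region up to measure zero, so that the nested intersection of boxes of every depth is null, in the spirit of the estimate behind $\pi(\mathcal{LW})$ having measure zero \cite[Theorem 3.6]{nilsson}. Controlling the geometry of the boxes under the substitutions $\rho_r$ --- their contraction rates and how they tile the renormalisable part of $D_1$ --- is the principal technical obstacle; the entropy bookkeeping for the components is then routine once the box structure and the action of $\rho_r$ on $K_{(\alpha,\beta)}$ are in hand.
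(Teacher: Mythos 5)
Your structural skeleton (pass to the lexicographic subshift, decompose the renormalisable region into $n$-renormalisation boxes, analyse transitive components, compare their entropies, induct on renormalisation depth) is essentially the paper's. The genuine gaps are in your measure-theoretic completion, which is where you depart from the paper and where your plan would not go through as stated. First, you miss the ingredient that makes the whole theorem reduce to pure symbolic dynamics: by \cite[Corollary 3.4, Corollary 3.5]{yomero3}, the set $\mathcal{S}^{\prime}$ of parameters $(a,b) \in D_1$ for which $(\Lambda_{(a,b)},f_{(a,b)})$ is conjugate to a subshift of finite type already has \emph{full} Lebesgue measure. Given this, it suffices to prove that every lexicographic subshift of finite type arising from $D_1$ is intrinsically ergodic; no estimates on box diameters, no tiling argument, and no control of exceptional sets are needed. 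Second, the infinitely renormalisable parameters require no estimate either: an infinitely renormalisable pair has zero topological entropy by \cite[Theorem 2.13]{sidorov1} (this is exactly how Theorem \ref{asiseacabadecubrir} disposes of them), hence fails $\dim_{H}(X_{(a,b)})>0$ and is not in $D_1$ at all. Your proposed box-exhaustion estimate is aimed at a set that is empty by definition of $D_1$.

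The step that would actually fail is your analyticity argument for the ``coincidence locus.'' Inside a fixed box $\mathcal{B}^0(\omega,\nu)$ the top component is the \emph{same} subshift $\Sigma_{(\sigma(\omega^{\infty}),\sigma(\nu^{\infty}))}$ for every parameter in the box, so its entropy is constant there, and the entropy of the gap component $B$ is a locally constant (devil's-staircase type) function of the parameter --- constancy on plateaus is precisely why $\mathcal{S}^{\prime}$ has full measure. Neither entropy is a non-constant real-analytic function of $(a,b)$, so the zero-set-of-an-analytic-function argument has no traction. What replaces it in the paper is a strict inequality valid at every subshift-of-finite-type parameter in the box: $h_{top}(\sigma_{B}) < h_{top}(\sigma_{\{\omega,\nu\}^{\infty}}) \leq h_{top}(\sigma_{A})$, where the first inequality is Lemma \ref{prepara2} (a proper-subsystem argument via \cite[Corollary 4.49]{lind}) and the second is \cite[Lemma 8, Lemma 9]{hall}; this is Theorem \ref{lemota}, and it scales correctly under the substitutions $\rho_r$ (Theorem \ref{lemadelaentropia}, Lemma \ref{prepara3renorm}). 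Consequently there is no interior coincidence at all: two components of equal maximal entropy occur only at the countably many corner parameters $(0\alpha,1\beta)=(\omega\nu^{\infty},\nu\omega^{\infty})$ and their renormalised images, and at those parameters intrinsic ergodicity genuinely \emph{fails} (Theorems \ref{canon1} and \ref{laconstruccion}), so they must be excluded from $D_I$ rather than absorbed into it. A minor slip in the same direction: points of $\mathcal{B}^0(\omega,\nu)$ other than the corner $(\omega^{\infty},\nu^{\infty})$ are not essential pairs, they are pairs renormalisable by $(\omega,\nu)$ (Theorem \ref{lemacajas1}), so the base case of your induction is not ``transitive SFT by Parry'' but already requires the component analysis.
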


As a consequence of \cite[Theorem 6.2, Theorem 6.3, Theorem 6.4, Theorem 6.5]{yomero1} and \cite[Theorem 5.20, Theorem 5.22, Theorem 6.2]{yomero3} combined with the fact that transitive subshifts of finite type, as well as subshifts with the specification property are intrinsically ergodic - see \cite{parry, bowen1} respectively- and \cite[Theorem 8]{hofbauer1} we obtain that $D_I \neq \emptyset$.

\vspace{1em}We say that a pair $(\alpha, \beta) \in \mathcal{LW}$ is \textit{essential pair} if $(\alpha, \beta)$ is not renormalisable and $\alpha$ and $\beta$ are periodic sequences. From \cite[Theorem 1.3]{samuel}, the lexicographic subshifts corresponding to essential pairs are transitive subshifts of finite type. Let $$\mathcal{E} = \left\{(\alpha, \beta) \in \mathcal{LW} : (\alpha, \beta) \hbox{\rm{ is an essential pair}}\right\}$$ and $$\mathcal{S} = \left\{(a,b) \in D_1 : (\Lambda_{(a,b)}), f_{(a,b)}) \hbox{\rm{ is conjugated to a transitive subshift of finite type}} \right\}.$$ Observe that $\pi(\mathcal{E}) \subset \mathcal{S} \subset D_1$ and $\pi(\mathcal{E}) \subset D_2$ \cite[Theorem 3.8]{sidorov6}, \cite[Theorem 2.13]{sidorov1}.  

\begin{lemma}
The set $\mathcal{S}$ has positive Lebesgue measure. \label{medidapositiva}
\end{lemma}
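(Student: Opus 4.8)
The plan is to notice first that the inclusion $\pi(\mathcal{E})\subset\mathcal{S}$ cannot by itself yield the claim: an essential pair consists of two periodic sequences, so $\mathcal{E}$ is countable and $\pi(\mathcal{E})$ is a countable, hence Lebesgue-null, subset of $D_1$. The positive measure must instead come from the fact that the \emph{entire} conjugacy class of a single transitive subshift of finite type is realised on a two-dimensional set of holes. Concretely, I would fix one essential pair $(\alpha,\beta)\in\mathcal{E}$ whose subshift $\Sigma_{(\alpha,\beta)}$ has positive topological entropy, and show that the set of holes whose attractor is conjugate to $\Sigma_{(\alpha,\beta)}$ contains a genuine box of positive two-dimensional Lebesgue measure. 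Since that box sits inside $\mathcal{S}$, the lemma follows from a single pair.

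Recall the correspondence between a centred hole $(a,b)$ and its kneading data: setting $\hat\alpha=\pi^{-1}(2a)$ and $\hat\beta=\pi^{-1}(2b-1)$, the attractor $(\Lambda_{(a,b)},f_{(a,b)})$ is coded by $\Sigma_{(\hat\alpha,\hat\beta)}$, which by \cite[Corollary 3.21]{yomero3} equals $\Sigma_{(\alpha,\beta)}$ for the admissible reduction $(\alpha,\beta)\in\mathcal{LW}$. The map $(a,b)\mapsto(\pi(\hat\alpha),\pi(\hat\beta))=(2a,2b-1)$ is an affine homeomorphism, and the defining condition $\beta\preccurlyeq\sigma^n x\preccurlyeq\alpha$ splits into two independent conjuncts, so that
$$\Sigma_{(\hat\alpha,\hat\beta)}=\Sigma_{(\hat\alpha,0^{\infty})}\cap\Sigma_{(1^{\infty},\hat\beta)}.$$
It therefore suffices to produce an interval $I_\alpha$ of upper bounds on which $\Sigma_{(\hat\alpha,0^{\infty})}=\Sigma_{(\alpha,0^{\infty})}$, and a mirror interval $I_\beta$ on which $\Sigma_{(1^{\infty},\hat\beta)}=\Sigma_{(1^{\infty},\beta)}$; on $I_\alpha\times I_\beta$ the full subshift is then constantly $\Sigma_{(\alpha,\beta)}$, a transitive subshift of finite type by \cite[Theorem 1.3]{samuel}.

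The heart of the matter is that a periodic Parry sequence is isolated from above inside the set of shift-maximal sequences. Indeed $\Sigma_{(\hat\alpha,0^{\infty})}=\Sigma_{(\alpha,0^{\infty})}$ exactly when the largest shift-maximal sequence $\preccurlyeq\hat\alpha$ is $\alpha$ itself, so I would show that for periodic $\alpha=(\alpha_1\cdots\alpha_p)^{\infty}$ there is $\alpha^{+}\succ\alpha$ with no shift-maximal sequence in $(\alpha,\alpha^{+})$. The argument is combinatorial: to exceed $\alpha$ a sequence must, at the first place it beats $\alpha$, create a block larger than the corresponding block of the periodic $\alpha$, and this forces some shift of $\hat\alpha$ to exceed $\hat\alpha$, destroying shift-maximality. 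Equivalently, any $\hat\alpha$ sharing a long enough prefix with $\alpha$ and satisfying $\hat\alpha\succcurlyeq\alpha$ forbids from above exactly the words that $\alpha$ forbids: $\hat\alpha\succcurlyeq\alpha$ prevents over-forbidding, while the common prefix makes the offending pattern of any over-large word appear already inside that prefix, preventing under-forbidding. Hence $I_\alpha$ contains a full cylinder intersected with $\{\hat\alpha\succcurlyeq\alpha\}$, so $\pi(I_\alpha)$ is a nondegenerate interval, and the mirror argument gives a nondegenerate $\pi(I_\beta)$.

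Transporting $I_\alpha\times I_\beta$ back through the affine homeomorphism, and using that $\pi$ carries nondegenerate lexicographic intervals to intervals of positive length, yields a rectangle of positive two-dimensional Lebesgue measure consisting of holes whose attractor is conjugate to the transitive subshift of finite type $\Sigma_{(\alpha,\beta)}$. Positivity of $h_{top}(\sigma_{(\alpha,\beta)})$ places the rectangle in $D_1$, and choosing $\pi(\alpha)\in(\tfrac12,1)$ and $\pi(\beta)\in(0,\tfrac12)$ keeps it inside $(\tfrac14,\tfrac12)\times(\tfrac12,\tfrac34)$; thus the rectangle lies in $\mathcal{S}$ and the lemma follows. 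The one genuinely delicate point is the isolation statement of the previous paragraph — that the finite list of minimal forbidden words of an essential pair is insensitive to the tails of $\hat\alpha$ and $\hat\beta$. The safest way to close it is to run the construction on one explicit essential pair, for instance $(\alpha,\beta)=\big((110)^{\infty},(001)^{\infty}\big)$, whose subshift forbids exactly $000$ and $111$; there the gap and the resulting box — for example $a\in[\tfrac{3}{7},\tfrac{7}{16})$ together with $b\in(\tfrac{9}{16},\tfrac{4}{7}]$ — can be written down by hand, which already establishes that $\mathcal{S}$ has positive Lebesgue measure.
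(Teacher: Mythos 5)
Your proposal is correct, and it reaches the lemma by a genuinely more self-contained route than the paper. The paper's own proof is three lines: it takes the hole $(a,b)=(\pi(0\alpha),\pi(1\beta))$ of an essential pair and invokes the stability theorem \cite[Theorem 3.3]{yomero3}, which supplies an open set $U\subset D_1$ of holes all having the same survivor set, hence attractors conjugate to the same transitive subshift of finite type; positive measure is then immediate from openness. You instead prove that stability from scratch: you split the two-sided condition as $\Sigma_{(\hat\alpha,\hat\beta)}=\Sigma_{(\hat\alpha,0^{\infty})}\cap\Sigma_{(1^{\infty},\hat\beta)}$ and reduce everything to the combinatorial fact that a periodic Parry sequence is isolated from above. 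That fact, which you rightly single out as the only delicate point, is true and closes along the lines you sketch: if a sequence $y$, all of whose shifts are $\preccurlyeq\hat\alpha$, agreed with $\alpha=(\alpha_1\cdots\alpha_p)^{\infty}$ up to position $j-1\geq p$ and had $y_j=1>0=\alpha_j$, then writing $j-1=qp+s$ with $1\leq s<p$ (note $s\neq 0$ since $\alpha_1=1$), the shift $\sigma^{qp}(y)$ begins $\alpha_1\cdots\alpha_s 1$ and therefore exceeds any $\hat\alpha$ beginning with the block $\alpha_1\cdots\alpha_p$, a contradiction; so the entire cylinder $\left\{\hat\alpha\succcurlyeq\alpha : \hat\alpha_1\cdots\hat\alpha_p=\alpha_1\cdots\alpha_p\right\}$ yields the same one-sided subshift, and the mirror-image symmetry handles $\beta$. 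Your closing example is also correct: for $(\alpha,\beta)=\left((110)^{\infty},(001)^{\infty}\right)$ the subshift is the mixing SFT forbidding $000$ and $111$, and the box $\left[\tfrac{3}{7},\tfrac{7}{16}\right)\times\left(\tfrac{9}{16},\tfrac{4}{7}\right]$, of area $\left(\tfrac{1}{112}\right)^2$, lies in $\mathcal{S}\cap D_1$. What the paper's route buys is brevity and uniformity (an open set around every essential pair, at the cost of leaning on \cite{yomero3}); what yours buys is independence from that stability theorem plus explicit, hand-checkable parameters -- in effect a symbolic proof of the local constancy that \cite[Theorem 3.3]{yomero3} asserts.
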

\begin{proof}
Let $(\alpha, \beta) \in \mathcal{LW}$ be an essential pair and $(a,b) = (\pi(0\alpha),\pi(1\beta))$. From \cite[Theorem 3.3]{yomero3} there is an open set $U \subset D_1$ such that for every $(a^{\prime}, b^{\prime}) \in U$, $X_{(a,b)} = X_{(a^{\prime}, b^{\prime})}$. This implies that $(\Lambda_{(a, b)}, f_{(a, b)})$ and $(\Lambda_{(a^{\prime}, b^{\prime})}, f_{(a^{\prime}, b^{\prime})})$ are topologically conjugated. Since $(\Lambda_{(a, b)}, f_{(a, b)})$ is conjugated to $(\Sigma_{(\alpha, \beta)}, \sigma_{(\alpha, \beta)})$ then $(\Lambda_{(a^{\prime}, b^{\prime})}, f_{(a^{\prime}, b^{\prime})})$ is also topologically conjugated to $(\Sigma_{(\alpha, \beta)}, \sigma_{(\alpha, \beta)})$ for every $(a^{\prime}, b^{\prime}) \in U$. This shows our result.
\end{proof}

Note that Lemma \ref{medidapositiva} implies that $D_I$ has positive Lebesgue measure. 

\vspace{1em}In \cite[Corollary 3.5]{yomero3} is shown that set $$T = \left\{(a,b) \in R : h_{top}(f_{(a,b)}) \hbox{\rm{ is constant}}\right\}$$ has full measure. Note that the set $$\mathcal{S}^{\prime} = \left\{(a,b) \in D_1 : (\Lambda_{(a,b)}), f_{(a,b)}) \hbox{\rm{ is conjugated to a subshift of finite type}} \right\} \subset T$$ and by Theorem \cite[Corollary 3.4]{yomero3} has full measure. Thus, to prove our result it suffices to show that every lexicographic subshift of finite type $(\Sigma_{(\alpha, \beta)}, \sigma_{(\alpha, \beta)})$ is intrinsically ergodic.

\subsubsection*{Renormalisation boxes and intrinsic ergodicity} 

\begin{definition}
Let $(\omega,\nu)$ be the associated pair of an essential pair $(\alpha^{\prime}, \beta^{\prime})$. The set $$\mathcal{B}^0(\omega,\nu) = \left\{(\alpha, \beta) \in \mathcal{LW} : (0\alpha, 1\beta) \in \left[\omega^{\infty}, \omega\nu^{\infty} \right]_{\prec} \times \left[\nu\omega^{\infty}, \nu^{\infty}\right]_{\prec}\right\}$$ is called a \textit{0-renormalisation box} or simply a \textit{renormalisation box}. \label{0renormalisationbox}
\end{definition}

\begin{lemma}
Let $\mathcal{B}^0(\omega, \nu)$ and $\mathcal{B}^0(\omega^{\prime}, \nu^{\prime})$ be renormalisation boxes. Then, $$\mathcal{B}^0(\omega, \nu) \cap \mathcal{B}^0(\omega^{\prime}, \nu^{\prime}) = \emptyset$$ if and only if $(\omega, \nu) \neq (\omega^{\prime}, \nu^{\prime})$. \label{cajas1}
\end{lemma}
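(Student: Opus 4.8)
Since the statement is an equivalence, I would prove the two implications separately. Throughout, write $I_1=[\omega^{\infty},\omega\nu^{\infty}]_{\prec}$ and $I_2=[\nu\omega^{\infty},\nu^{\infty}]_{\prec}$ for the two factors of $\mathcal{B}^{0}(\omega,\nu)$, and $I_1',I_2'$ for the corresponding factors of $\mathcal{B}^{0}(\omega',\nu')$. For the direction in which the pairs coincide, the two boxes are literally equal, so it suffices to exhibit one point: the essential pair $(\alpha',\beta')$ whose associated pair is $(\omega,\nu)$ satisfies $0\alpha'=\omega^{\infty}$ and $1\beta'=\nu^{\infty}$, i.e. it sits at the corner $(\min I_1,\max I_2)$, and $(\alpha',\beta')\in\mathcal{LW}$; hence $\mathcal{B}^{0}(\omega,\nu)\neq\emptyset$. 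This already shows that an empty intersection forces $(\omega,\nu)\neq(\omega',\nu')$.

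For the converse — distinct pairs give disjoint boxes — I would first pin down the shape of the factors. Since $\omega$ begins with $0$ and $\nu$ with $1$ we have $\omega\nu^{\infty}\succ\omega^{\infty}$, and both endpoints of $I_1$ begin with the word $\omega$; a standard prefix argument (any sequence strictly between two sequences sharing the prefix $\omega$ must also start with $\omega$) then shows every sequence of $I_1$ begins with $\omega$, and likewise every sequence of $I_2$ begins with $\nu$. Consequently, if some $(\alpha,\beta)$ lay in both boxes, then $0\alpha$ would begin with both $\omega$ and $\omega'$ and $1\beta$ with both $\nu$ and $\nu'$; hence $\omega,\omega'$ are comparable as prefixes, and so are $\nu,\nu'$.

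To upgrade prefix-comparability to equality I would use the renormalisation description of the box. By the definition of a renormalisable pair, if $(\alpha,\beta)$ has associated pair $(\omega,\nu)$ then $0\alpha=\omega\nu^{n^{\nu}_1}\omega^{n^{\omega}_1}\cdots\in I_1$ and $1\beta=\nu\omega^{m^{\omega}_1}\cdots\in I_2$, so $(\alpha,\beta)\in\mathcal{B}^{0}(\omega,\nu)$; conversely, a position $0\alpha\in I_1$, $1\beta\in I_2$ together with $(\alpha,\beta)\in\mathcal{LW}$ forces the binary expansions to decompose into blocks $\omega$ and $\nu$, so that $(\alpha,\beta)$ is either the essential corner $(\alpha',\beta')$ or is renormalisable with associated pair exactly $(\omega,\nu)$. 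Granting this identification together with the uniqueness of the shortest associated pair from \cite{yomero3}, a common point of the two boxes would either be renormalisable with two associated pairs $(\omega,\nu)$ and $(\omega',\nu')$, forcing them equal, or be an essential (hence non-renormalisable) corner of both, forcing $\omega^{\infty}=(\omega')^{\infty}$ and $\nu^{\infty}=(\nu')^{\infty}$ and therefore $(\omega,\nu)=(\omega',\nu')$ by primitivity. In either case $(\omega,\nu)=(\omega',\nu')$, contradicting distinctness; so the boxes are disjoint.

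I expect the main obstacle to be precisely the converse inclusion invoked above: that merely knowing $0\alpha\in I_1$ and $1\beta\in I_2$ (with $(\alpha,\beta)\in\mathcal{LW}$) suffices to recover the block decomposition, hence the associated pair. This is exactly where the normalisations $\omega=0\text{-}\max_{\omega}$, $\nu=1\text{-}\min_{\nu}$ and the cross-inequalities $(0\text{-}\max_{\nu})^{\infty}\prec\omega^{\infty}$, $\nu^{\infty}\prec(1\text{-}\min_{\omega})^{\infty}$ are needed, to rule out any competing decomposition — in particular one arising when $\omega$ is a proper prefix of $\omega'$, or $\nu'$ of $\nu$. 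I would isolate this as a separate combinatorial lemma, proved by reading the expansion of $0\alpha$ block by block and using the maximality and minimality of $\omega$ and $\nu$ to show that each block boundary is forced and unambiguous; once this is in place, the deductions in the previous paragraph are purely formal.
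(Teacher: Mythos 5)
Your trivial direction is fine, and your prefix observation is correct and genuinely different from the paper's argument: both endpoints of $\left[\omega^{\infty},\omega\nu^{\infty}\right]_{\prec}$ begin with $\omega$, so every element of the interval does, and likewise for the second factor. But the proof then rests entirely on the step you yourself flag as the "main obstacle": that $(\alpha,\beta)\in\mathcal{LW}$ with $(0\alpha,1\beta)\in\mathcal{B}^0(\omega,\nu)$ must admit a decomposition into $\omega$- and $\nu$-blocks, hence is either the corner $(\sigma(\omega^{\infty}),\sigma(\nu^{\infty}))$ or renormalisable with associated pair $(\omega,\nu)$. You defer this to "a separate combinatorial lemma" with only a one-sentence sketch, and that claim is not a technical detail --- it is essentially Theorem \ref{lemacajas1} of the paper, i.e.\ the \emph{stronger} statement that comes after the lemma you are proving. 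Worse, in the paper's logical order Theorem \ref{lemacajas1} is deduced \emph{from} Lemma \ref{cajas1}: the renormalisable case there is handled precisely by invoking disjointness of distinct boxes. So your plan either cites a result that lies downstream of the statement being proved, which is circular, or requires an independent proof of the block-decomposition claim, which you have not supplied; the normalisations $\omega=0\text{-}\max_{\omega}$, $\nu=1\text{-}\min_{\nu}$ do not make the block boundaries "forced" without real work, and all of the difficulty of the lemma lives exactly there.

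There is also a secondary gap at the end: even granting the block decomposition, a sequence renormalisable by $(\omega,\nu)$ can a priori also be written in blocks over longer words $(\omega',\nu')$ that are themselves concatenations of $\omega$'s and $\nu$'s, so "uniqueness of the shortest associated pair" does not by itself force $(\omega,\nu)=(\omega',\nu')$; you would additionally need to use that $(\omega',\nu')$ is the associated pair of an \emph{essential} pair, and argue that if the $\omega',\nu'$ blocks decomposed into $\omega,\nu$ blocks then $(\sigma({\omega'}^{\infty}),\sigma({\nu'}^{\infty}))$ would itself be renormalisable, contradicting essentiality. The paper avoids both issues by a much more elementary route: assuming without loss of generality ${\omega'}^{\infty}\preccurlyeq\omega^{\infty}$, it compares ${\omega'}^{\infty}$ and $\omega^{\infty}$ at their first discrepancy, splits into cases according to the relative lengths of $\omega$ and $\omega'$, and concludes that the first-coordinate intervals are already disjoint; when $\omega=\omega'$ it passes to mirror images $(\bar{\nu},\bar{\omega})$ and repeats the argument on the second coordinate. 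Your route is not unsalvageable, but as written it proves the lemma only modulo a statement that is at least as strong as the lemma itself.
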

\begin{proof}
It is clear that $\mathcal{B}^0(\omega, \nu) \cap \mathcal{B}^0(\omega^{\prime}, \nu^{\prime}) = \emptyset$ then $(\omega, \nu) \neq (\omega^{\prime}, \nu^{\prime})$. Let us assume now that $(\omega, \nu) \neq (\omega^{\prime}, \nu^{\prime})$. Then, there is no loss in generality assuming that ${\omega^{\prime}}^{\infty} \preccurlyeq {\omega}^{\infty}$. 

\vspace{1em}Let us assume that ${\omega^{\prime}}^{\infty} \prec \omega^{\infty}$ first. Then, there is $i \in \mathbb{N}$ such that ${\omega^{\prime}}^{\infty}_j = \omega^{\infty}_j$ for every $j \leq i$ and ${\omega^{\prime}}^{\infty}_i = 0$ and $\omega^{\infty}_i = 1$. Then we need to consider the following three cases. Suppose that $\ell(\omega) = \ell(\omega^{\prime})$. This implies that $i \leq \ell(\omega)$ then $\omega^{\prime}{\nu^{\prime}}^{\infty} \prec \omega^{\infty}$ regardless the choice of $\nu$ and $\nu^{\prime}$. Then, $\left[\omega^{\infty}, \omega\nu^{\infty}\right]_{\prec} \cap \left[{\omega^{\prime}}^{\infty}, \omega^{\prime}{\nu^{\prime}}^{\infty}\right]_{\prec} = \emptyset$ which implies our result. Now, let us assume that $\ell(\omega^{\prime}) < \ell(\omega)$. This gives $i \leq \ell(\omega)$, which implies that $$\left[\omega^{\infty}, \omega\nu^{\infty}\right]_{\prec} \cap \left[{\omega^{\prime}}^{\infty},\omega^{\prime}{\nu^{\prime}}^{\infty}\right]_{\prec} = \emptyset$$ as in the pre\-vious ca\-se. Sup\-po\-se now that $\ell(\omega) < \ell(\omega^{\prime})$. Then, we have to consider two sub-cases. If $i \leq \ell(\omega)$ then it is clear that $$\left[\omega^{\infty}, \omega\nu^{\infty}\right]_{\prec} \cap \left[{\omega^{\prime}}^{\infty}, \omega^{\prime}{\nu^{\prime}}^{\infty}\right]_{\prec} = \emptyset.$$ If $\ell(\omega)< i \leq \ell(\omega^{\prime})$ then $\omega^{\prime}{\nu^{\prime}}^{\infty} \prec \omega^{\infty}$ since $\omega^{\infty}_i = 1$ and $\omega^{\prime}{\nu^{\prime}}^{\infty}_i = 0$. Then our result follows.

If $\omega = \omega^{\prime}$ then $\nu \neq \nu^{\prime}$. It is clear that $(\bar{\nu}, \bar{\omega})$ and $(\bar{\nu^{\prime}}, \bar{\omega^{\prime}})$ define renormalisation boxes as well. Without losing generality we can assume that $\nu^{\infty} \prec {\nu^{\prime}}^{\infty}$. Then $\overline{{\nu^{\prime}}^{\infty}} \prec \overline{\nu^{\infty}}$. Then, applying the arguments shown above it we get $$\left[{\overline{\nu}}^{\infty}, \overline{\nu}{\overline{\omega}}^{\infty} \right]_{\prec} \cap \left[{\overline{\nu^{\prime}}}^{\infty}, \overline{\nu^{\prime}}{\overline{\omega^{\prime}}}^{\infty}\right]_{\prec} = \emptyset.$$ Therefore $$\left[\nu{\omega}^{\infty}, {\nu}^{\infty}\right]_{\prec} \cap \left[\nu^{\prime}{\omega^{\prime}}^{\infty}, {\nu^{\prime}}^{\infty}\right]_{\prec} = \emptyset.$$ This concludes the proof.
\end{proof}

Observe that $\Sigma_2\times \Sigma_2$ is a metric space with the distance given by $d_2:\left(\Sigma_2\times \Sigma_2\right) \times \left(\Sigma_2 \times \Sigma_2\right) \to \left[0,1\right]$ given by $$d_2((\alpha, \beta), (\alpha^{\prime}, \beta^{\prime})) = \left({d(\alpha, \alpha^{\prime})}^2 + d(\beta, \beta^{\prime})^{2} \right)^{\frac{1}{2}}.$$ Note that $$\hbox{\rm{diam}}(\mathcal{B}^0(\omega, \nu)) = d_2\left((\omega^{\infty}, \nu^{\infty}),(\omega\nu^{\infty}, \nu\omega^{\infty})\right)$$ and $$d_2\left((\omega^{\infty}, \nu^{\infty}),(\omega\nu^{\infty}, \nu\omega^{\infty})\right) = \left(\frac{1}{{2^{(\ell(\omega)+1)}}^2} + \frac{1}{{(2^{\ell(\nu)+1)}}^{2}} \right)^{\frac{1}{2}}.$$

\begin{theorem}
If $(\alpha,\beta) \in \mathcal{LW}$ satisfies that $(0\alpha,1\beta) \in \mathcal{B}^0(\omega, \nu) \setminus \left\{(\omega^{\infty}, \nu^{\infty})\right\}$, then $(\alpha,\beta)$ is a renormalisable pair by $\omega$ and $\nu$.\label{lemacajas1}
\end{theorem}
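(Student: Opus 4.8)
The plan is to prove the two block expansions separately and to reduce the second to the first by the mirror symmetry already exploited in Lemma \ref{cajas1}. Throughout I keep in mind that, since $(\omega,\nu)$ is the associated pair of the essential pair $(\alpha^{\prime},\beta^{\prime})$, one has $0\alpha^{\prime} = \omega^{\infty}$ and $1\beta^{\prime} = \nu^{\infty}$, and all the structural requirements in the definition of a renormalisable pair, namely $\omega = 0-\max_{\omega}$, $\nu = 1-\min_{\nu}$, $(0-\max_{\nu})^{\infty} \prec \omega^{\infty}$, $\nu^{\infty} \prec (1-\min_{\omega})^{\infty}$ and $\ell(\omega\nu) \geq 3$, are already available. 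Consequently the only thing left to establish is that $0\alpha$ is a concatenation of the blocks $\omega$ and $\nu$ beginning with $\omega$, and that $1\beta$ is such a concatenation beginning with $\nu$; these are exactly the two displayed expansions in the definition of renormalisability once consecutive equal blocks are collected into powers. Note also that the excluded corner $(\omega^{\infty},\nu^{\infty})$ is precisely $(0\alpha^{\prime},1\beta^{\prime})$, i.e. the pair $(\alpha^{\prime},\beta^{\prime})$ itself, which must be discarded because an essential pair is by definition not renormalisable.

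First I would settle the prefix step: membership in the box forces $0\alpha$ to begin with $\omega$ and $1\beta$ to begin with $\nu$. This is the clean lexicographic observation used implicitly in the proof of Lemma \ref{cajas1}: both endpoints $\omega^{\infty}$ and $\omega\nu^{\infty}$ of the first factor share the prefix $\omega$, so any sequence squeezed between them must agree with $\omega$ on the first $\ell(\omega)$ coordinates, since a first disagreement at a coordinate $j \leq \ell(\omega)$ would push $0\alpha$ strictly below $\omega^{\infty}$ or strictly above $\omega\nu^{\infty}$. Applying the identical argument to the endpoints $\nu\omega^{\infty}$ and $\nu^{\infty}$ gives that $1\beta$ begins with $\nu$. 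Peeling off this initial block leaves a tail $\gamma := \sigma^{\ell(\omega)}(0\alpha)$ with $\omega^{\infty} \preccurlyeq \gamma \preccurlyeq \nu^{\infty}$.

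The heart of the proof is an inductive peeling that repeatedly extracts a block $\omega$ or $\nu$ from the tail of $0\alpha$. Here I would exploit that $(\alpha,\beta) \in \mathcal{LW}$, which yields $\beta \preccurlyeq \sigma^{n}(\alpha) \preccurlyeq \alpha$ for every $n$, together with the upper estimate on $\alpha$ obtained by deleting the common leading symbol from $0\alpha \preccurlyeq \omega\nu^{\infty}$. The claim to be proved at each stage is that a tail in the relevant admissible range beginning with $0$ must begin with the whole word $\omega$, and one beginning with $1$ must begin with the whole word $\nu$. The delicate point is to rule out a partial match: a tail that agrees with $\omega$ on a proper prefix and then carries a $1$ where $\omega$ carries a $0$ produces, after a further shift, a suffix beginning with $1$ which, because $(1-\min_{\omega})^{\infty} \succ \nu^{\infty}$, is too large to be compatible with $\sigma^{n}(\alpha) \preccurlyeq \alpha$; symmetrically, a premature $0$ inside a would-be $\nu$ block is excluded by $(0-\max_{\nu})^{\infty} \prec \omega^{\infty}$ through the lower admissibility bound $\sigma^{n}(\alpha) \succcurlyeq \beta$. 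The self-maximality $\omega = 0-\max_{\omega}$ and self-minimality $\nu = 1-\min_{\nu}$ are exactly what guarantee that, once a full block has been matched, the next tail again lands in the admissible range, so that the induction may be iterated.

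To obtain the companion statement for $1\beta$ without repeating this analysis, I would invoke the mirror symmetry: the involution $\bar{\,\cdot\,}$ reverses $\prec$ and sends the admissible pair $(\alpha,\beta)$ to the admissible pair $(\bar\beta,\bar\alpha) \in \mathcal{LW}$, while turning $(\omega,\nu)$ into the associated pair $(\bar\nu,\bar\omega)$ of the mirrored essential pair; it carries $1\beta \in [\nu\omega^{\infty},\nu^{\infty}]_{\prec}$ into $0\bar\beta \in [\bar\nu^{\infty}, \bar\nu\,\bar\omega^{\infty}]_{\prec}$, which is the first-factor condition of the box $\mathcal{B}^0(\bar\nu,\bar\omega)$. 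Applying the $0\alpha$-case to $(\bar\beta,\bar\alpha)$ therefore expands $0\bar\beta$ into $\bar\nu,\bar\omega$ blocks beginning with $\bar\nu$, and mirroring back expands $1\beta$ into $\nu,\omega$ blocks beginning with $\nu$. Collecting runs of equal blocks into powers then yields precisely $0\alpha = \omega\nu^{n^{\nu}_1}\omega^{n^{\omega}_1}\nu^{n^{\nu}_2}\cdots$ and $1\beta = \nu\omega^{m^{\omega}_1}\nu^{m^{\nu}_1}\cdots$ with exponents in $\mathbb{N}\cup\{\infty\}$, the value $\infty$ arising exactly on the boundary of the box, so $(\alpha,\beta)$ is renormalisable by $\omega$ and $\nu$. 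I expect the main obstacle to be the inductive peeling of the third paragraph: choosing an invariant on the tails that is genuinely preserved under extraction of a block, so that both the lower control against $\omega^{\infty}$ and $\beta$ and the upper control against $\nu^{\infty}$ and $\alpha$ survive, and verifying that partial matches cannot occur is where the combinatorial hypotheses on $\omega$ and $\nu$ must be used most carefully.
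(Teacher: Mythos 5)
Your route --- prefix squeeze, inductive peeling of blocks, mirror symmetry to pass from $0\alpha$ to $1\beta$ --- is genuinely different from the paper's proof, and your prefix step and mirror reduction are sound. But there is a genuine gap exactly where you yourself locate ``the main obstacle'': the inductive peeling, which is the entire mathematical content of this route, is never carried out, and the stage-claim on which you propose to run the induction is false as formulated. You assert that ``a tail in the relevant admissible range beginning with $0$ must begin with the whole word $\omega$''; no range-type hypothesis can make this true. Indeed, every element of the component $A=\Sigma_{(\sigma(\omega^{\infty}),\sigma(\nu^{\infty}))}$ of Lemma~\ref{prepara1} has all of its tails in $[\sigma(\nu^{\infty}),\sigma(\omega^{\infty})]\subset[\beta,\alpha]$, yet $A$ is a positive-entropy subshift containing sequences that start with $0$ but not with $\omega$: for the paper's own essential pair with $\omega=01$, $\nu=100$, the point $(001)^{\infty}$ lies in $A$. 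So what forces the block structure of $0\alpha$ is not membership of its tails in any interval but the extremality of $0\alpha$ itself (the Parry condition $\sigma^{n}(\alpha)\preccurlyeq\alpha$ together with the box bounds), and your sketch never isolates how extremality enters. Relatedly, your invariant does not propagate: after peeling a $\nu$ block the only lower control on the new tail is $\beta\succcurlyeq\sigma(\nu\omega^{\infty})$, which does not dominate $\omega^{\infty}$. Finally, the two exclusion mechanisms you name are not arguments: after a premature symbol one knows a single symbol of the offending suffix, so a contradiction with $\beta\preccurlyeq\sigma^{n}(\alpha)\preccurlyeq\alpha$ must be produced by comparing tails anchored at \emph{earlier} positions, and which inequality fires (Parry, the admissibility bound against $\beta$, or the box bound $0\alpha\preccurlyeq\omega\nu^{\infty}$) depends on the pattern of blocks preceding the defect; for instance, for $\omega=01$, $\nu=100$, the defective prefix $\omega\nu\omega\,101$ is killed by comparing two tails that both begin \emph{mid-block}, at offset two into an $\omega$ block, not by any comparison launched at the defect itself. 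Organising these case distinctions is precisely the proof, and it is missing.

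For contrast, the paper avoids this combinatorics entirely: it invokes the classification of non-essential admissible pairs as either renormalisable or coded-but-not-of-finite-type (\cite[Theorem 5.21]{yomero3}), pins down the renormalising words in the first case via the disjointness of renormalisation boxes (Lemma~\ref{cajas1}), and eliminates the coded case by a finiteness argument on the lengths of the associated words of approximating essential pairs. What you are attempting to reprove from scratch is in effect the renormalisation theory of expanding Lorenz maps of Glendinning and Hall \cite{hall} and Glendinning and Sparrow \cite{glendinning1}; that would be a legitimate, more self-contained alternative, but until the peeling lemma is given a correct inductive hypothesis (one recording the extremality of $0\alpha$ and $1\beta$, not merely an interval containing their tails) and proved, the proposal does not establish the theorem.
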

\begin{proof}
Let $(\alpha, \beta)$ satisfying our hypothesis. Since $(\alpha, \beta)$ is not an essential pair, from \cite[Theorem 5.21]{yomero3} then $(\alpha, \beta)$ is either coded but not of finite type or renormalisable. 

If $(\alpha,\beta)$ is renormalisable, then there are words $\omega^{\prime}$ and $\nu^{\prime}$ which renormalise $(\alpha, \beta)$. Assume that $(\omega, \nu) \neq (\omega^{\prime}, \nu^{\prime})$. Then by Lemma \ref{cajas1}, $(0\alpha, 1\beta) \notin \mathcal{B}^{0}(\omega, \nu)$ which is a contradiction. Thus $(\alpha,\beta)$ is renormalisable by $\omega$ and $\nu$. 

Assume that $(\alpha,\beta)$ is coded but not of finite type and let $\mathcal{B}^0(\omega, \nu)$ such that $(0\alpha,1\beta) \in \mathcal{B}^0(\omega, \nu)$. Then, there is a sequence $\left\{(\alpha_n, \beta_n)\right\}_{n=1}^{\infty}$ of essential pairs such that $$\Sigma_{(\alpha, \beta)} = \mathop{\lim}_{n\to \infty} \Sigma_{(\alpha_n, \beta_n)} \hbox{\rm{ with }} \alpha_n \prec \alpha_{n+1} \hbox{\rm{ and }} \beta_{n+1} \prec \beta_n$$ for every $n \in \mathbb{N}$. Since $(\Sigma_{(\alpha \beta)})$ is coded we have $\Sigma_{(\alpha_n, \beta_n)} \subset \Sigma_{(\alpha_{n+1}, \beta_{n+1})}$. Let $(\omega_n, \nu_n)$ be the associated pair of $(\alpha_n, \beta_n)$ for every $n$. We claim that either $\left\{\ell(\omega_n)\right\}_{n=1}^{\infty}$ is an unbounded sequence or $\left\{\ell(\nu_n)\right\}_{n=1}^{\infty}$ is an unbounded sequence. Assume that both sequences $\left\{\ell(\omega_n)\right\}_{n=1}^{\infty}$ and $\left\{\ell(\nu_n)\right\}_{n=1}^{\infty}$ are bounded. Let $M$ and $N \in \mathbb{N}$ such that $\left\{\ell(\omega_n)\right\} \leq M$ and $\left\{\ell(\nu_n)\right\} \leq M$. Observe that the set $$\left\{(\omega, \nu) \in \mathop{\bigcup}\limits_{n=2}^N B_n(\Sigma_2)\times \mathop{\bigcup}\limits_{n=2}^{M} B_m(\Sigma_2): (\alpha,\beta) \hbox{\rm{ is essential }}\right\}$$ is finite, then there is $n$ such that $(\alpha_n, \beta_n) = (\alpha,\beta)$. This gives that $(\alpha, \beta) = (\sigma(\omega^{\infty}), \sigma(\nu^{\infty}))$ which is a contradiction. 
\end{proof}

Observe that Theorem \ref{lemacajas1} gives us a decomposition of $\pi^{-1}(D_2)$, namely $(\alpha, \beta)$ is either essential, renormalisable or a coded system. Denote by $$\mathcal{C} = \left\{(\alpha, \beta) \in \mathcal{LW} : (\alpha, \beta) \hbox{\rm{ is coded}}\right\},$$ $$\mathcal{LW}^{\prime} = \left\{(0\alpha, 1\beta) : (\alpha,\beta) \in \mathcal{LW}\right\}$$ and $$\mathcal{C}^{\prime} = \left\{(0\alpha, 1\beta) : (\alpha,\beta) \in \mathcal{C} \right\}.$$ As a consequence of Theorem \ref{lemacajas1} and Lemma \ref{medidapositiva} we have that all the dynamical properties of $D_2$ are determined by essential pairs and limits of essential pairs. 

\begin{corollary}
$$\pi^{-1}(D_2) \cap \mathcal{LW}^{\prime} = \left(\mathop{\bigcup}\limits_{(\alpha,\beta) \in \mathcal{E}} \mathcal{B}^0(\omega, \nu)\right) \cup \mathcal{C}^{\prime}.$$ \label{lascajascubren1}
\end{corollary}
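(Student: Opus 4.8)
The plan is to read the identity directly off the trichotomy recorded in the remark following Theorem~\ref{lemacajas1}: every pair $(\alpha,\beta)$ with $(0\alpha,1\beta)\in\pi^{-1}(D_2)\cap\mathcal{LW}'$ is essential, renormalisable, or coded. The corollary then becomes a bookkeeping statement asserting that the essential and the renormalisable pairs together exhaust $\bigcup_{(\alpha,\beta)\in\mathcal{E}}\mathcal{B}^0(\omega,\nu)$, while the coded pairs are exactly $\mathcal{C}'$. I would prove the two inclusions separately. For $\subseteq$, I start from the trichotomy. A coded pair lies in $\mathcal{C}'$ by the very definition of $\mathcal{C}$, and an essential pair with associated pair $(\omega,\nu)$ satisfies $(0\alpha,1\beta)=(\omega^{\infty},\nu^{\infty})$, the corner of $\mathcal{B}^0(\omega,\nu)$, so it lies in the union.

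The substantive case of $\subseteq$ is a renormalisable pair. Here I would renormalise repeatedly: since $(a,b)\in D_2$ the pair is only finitely renormalisable, so after finitely many steps the descent bottoms out at an essential base pair $(\alpha^{\ast},\beta^{\ast})$ with associated pair $(\omega^{\ast},\nu^{\ast})$, and $(\alpha,\beta)$ is renormalisable by $(\omega^{\ast},\nu^{\ast})$. Comparing leading blocks — using that $\omega^{\ast}=0-\max_{\omega^{\ast}}$ begins with $0$ while $\nu^{\ast}=1-\min_{\nu^{\ast}}$ begins with $1$ — one reads off $\omega^{\ast\infty}\preccurlyeq 0\alpha\preccurlyeq\omega^{\ast}\nu^{\ast\infty}$ and $\nu^{\ast}\omega^{\ast\infty}\preccurlyeq 1\beta\preccurlyeq\nu^{\ast\infty}$, that is $(\alpha,\beta)\in\mathcal{B}^0(\omega^{\ast},\nu^{\ast})$, a box attached to the essential pair $(\alpha^{\ast},\beta^{\ast})\in\mathcal{E}$. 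This is exactly the converse mechanism to Theorem~\ref{lemacajas1}, and the innermost (longest) choice of $(\omega^{\ast},\nu^{\ast})$ is what guarantees the box belongs to the union over $\mathcal{E}$ rather than over an arbitrary renormalisable base.

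For $\supseteq$, I check that each essential box and $\mathcal{C}'$ sit inside $\pi^{-1}(D_2)\cap\mathcal{LW}'$. Fix an essential pair and its box $\mathcal{B}^0(\omega,\nu)$. Its corner is the essential pair itself, which lies in $\pi^{-1}(D_2)$ since $\pi(\mathcal{E})\subset D_2$; every remaining point of the box is renormalisable by $\omega$ and $\nu$ by Theorem~\ref{lemacajas1}, hence finitely renormalisable with an essential core, and therefore again in $\pi^{-1}(D_2)$ by the description of $D_2$ in \cite{sidorov6,yomero3}. Finally $\mathcal{C}'\subseteq\pi^{-1}(D_2)\cap\mathcal{LW}'$ because the coded pairs occurring in $\mathcal{LW}'$ are the increasing limits of essential pairs and, by that same characterisation of $D_2$, remain in $D_2$.

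The delicate point, and the place where the elementary lexicographic estimates are insufficient, is the interface between the combinatorial definition of $D_2$ via the finiteness of $Bad(a,b)$ and the renormalisation-theoretic description used on the right-hand side. Concretely, I must know that membership in $D_2$ forces a renormalisable pair to be only finitely renormalisable — so that the descent to an essential core $(\omega^{\ast},\nu^{\ast})$ terminates — and, conversely, that every interior point of an essential box and every coded pair returns to $D_2$. Both directions rest on the characterisations of $D_2$ in \cite{sidorov6} and \cite{yomero3}, and it is verifying that these characterisations align exactly with the essential/renormalisable/coded trichotomy — so that no pair of $\pi^{-1}(D_2)\cap\mathcal{LW}'$ escapes the union and nothing extraneous is swept in — that carries the weight of the argument.
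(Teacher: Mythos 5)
Your overall outline --- the essential/renormalisable/coded trichotomy of \cite[Theorem 5.21]{yomero3} combined with Theorem \ref{lemacajas1} and the inclusion $\pi(\mathcal{E})\subset D_2$ --- is exactly the route the paper takes (the paper offers no argument beyond the remark recording that trichotomy). However, your handling of the one substantive case, a renormalisable pair, is backwards in a way that breaks the inclusion $\subseteq$. You claim that the \emph{innermost (longest)} choice of renormalising words $(\omega^{\ast},\nu^{\ast})$ is what guarantees that the containing box belongs to the union over $\mathcal{E}$. The opposite is true: the paper's standing convention is that the associated pair of a renormalisable $(\alpha,\beta)$ is the \emph{shortest} admissible choice, and it is only for this outermost choice that the base pair $(\sigma(\omega^{\infty}),\sigma(\nu^{\infty}))$ of $\mathcal{B}^0(\omega,\nu)$ can be essential. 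If $(\alpha,\beta)$ is renormalisable at depth two or more, say by $(\omega_1,\nu_1)$ at the first level, and you pass to pulled-back innermost words $\Omega$ and $N$, then $\Omega^{\infty}$ and $N^{\infty}$ are themselves concatenations of $\omega_1$ and $\nu_1$, so the base pair $(\sigma(\Omega^{\infty}),\sigma(N^{\infty}))$ of $\mathcal{B}^0(\Omega,N)$ is again renormalisable by $(\omega_1,\nu_1)$ and hence \emph{not} in $\mathcal{E}$. Concretely, with $\omega_1=01$, $\nu_1=10$, $\Omega=\rho_{\omega_1,\nu_1}(011)=011010$ and $N=\rho_{\omega_1,\nu_1}(10)=1001$, one has $\Omega^{\infty}=\omega_1\nu_1^2\omega_1\nu_1^2\cdots$ and $N^{\infty}=\nu_1\omega_1\nu_1\omega_1\cdots$, so the corner pair of $\mathcal{B}^0(\Omega,N)$ is renormalisable, and $\mathcal{B}^0(\Omega,N)$ is not one of the boxes appearing on the right-hand side of Corollary \ref{lascajascubren1}.

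The source of the slip is a conflation of two different objects: the pair produced at the bottom of your descent is the \emph{recoding} of $(\alpha,\beta)$ itself and lives in renormalised coordinates; it is not the base pair of any $0$-renormalisation box in the original coordinates. What the argument actually needs --- and what your proposal is missing --- is a one-step statement: for the shortest associated pair $(\omega,\nu)$ of a renormalisable $(\alpha,\beta)$ with $(0\alpha,1\beta)\in\pi^{-1}(D_2)\cap\mathcal{LW}^{\prime}$, the periodic pair $(\sigma(\omega^{\infty}),\sigma(\nu^{\infty}))$ is itself non-renormalisable; the natural argument is that a genuine decomposition of $\omega^{\infty}$ and $\nu^{\infty}$ into blocks $\omega^{\prime},\nu^{\prime}$ would make $(\alpha,\beta)$ renormalisable by the strictly shorter pair $(\omega^{\prime},\nu^{\prime})$, contradicting minimality, and this is where the shortest-choice convention does real work. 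Once that lemma is in place, a single renormalisation step settles $\subseteq$; no repeated descent and no finiteness of renormalisation is needed for this corollary --- iterated renormalisation through the substitutions $\rho_r$ is the mechanism of Theorem \ref{asiseacabadecubrir} for $D_1\setminus D_2$, not of Corollary \ref{lascajascubren1}. Your $\supseteq$ direction is at the same level of rigour as the paper's own (unproved) assertion and I do not fault it relative to the paper, but the $\subseteq$ mechanism as you state it would place renormalisable pairs in boxes attached to non-essential bases, which is precisely what the corollary must exclude.
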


Moreover, as a consequence of \cite[Corollary 3.21]{yomero3} we have that every attractor $\left(\Lambda_{(a,b)}, f_{(a,b)}\right)$ with $(a,b) \in D_2$ is associated to an essential pair, a renormalisable subshift or to a coded system.

\subsubsection*{Transitive components of $(\Sigma_{(\alpha,\beta)},\sigma_{(\alpha,\beta)})$ for $(0\alpha,1\beta) \in D_2$}

Let $(\Sigma_{(\alpha,\beta)},\sigma_{(\alpha,\beta)})$ be a lexicographic subshift. A subset $A$ of $\Sigma_{(\alpha,\beta)}$ is said to be a \textit{transitive component} if $A$ is closed, completely invariant (i.e. $f^{-1}(A) = A = f(A)$), $\sigma\mid_A: A \to A$ is topologically transitive and there is no other set $A^{\prime}$ such that $A \subsetneq A^{\prime}$ containing a dense orbit \cite[p. 1313]{bundfuss}. Observe that \cite[Theorem 6.3]{bundfuss} and \cite[Corollary 3.21]{yomero3} imply that $\Sigma_{(\alpha,\beta)}$ has in general at most $8$ transitive components and that $\Sigma_{(\alpha,\beta)}$ has at most $4$ transitive components if both $\alpha$ and $\beta$ are periodic sequences. 

\vspace{1em}We characterise now such transitive components for $(\alpha, \beta)$ such that $(0\alpha, 1\beta) \in \mathcal{B}^0(\omega, \nu)$. Recall that, given two finite words $\omega$ and $\nu$, $\left\{\omega, \nu\right\}^{\infty}$ denotes the set of free concatenations of $\omega$ and $\nu$ and their shifts. 

\begin{lemma}
Let $(0\alpha, 1\beta) \in \mathcal{B}^0(\omega, \nu)$ be a renormalisable pair such $\left(\Sigma_{(\alpha,\beta)}, \sigma_{(\alpha,\beta)}\right)$ is a subshift of finite type. Then $\left(\Sigma_{(\alpha, \beta)},\sigma_{(\alpha, \beta)}\right)$ has two transitive components only. Moreover, the transitive components of $\left(\Sigma_{(\alpha,\beta)}, \sigma_{\alpha,\beta}\right)$, $A$ and $B$ are given by $A = \Sigma_{(\sigma(\omega^{\infty}), \sigma(\nu^{\infty}))}$ and $B \subsetneq \left\{\omega,\nu\right\}^{\infty}$. \label{prepara1}
\end{lemma}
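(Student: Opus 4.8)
The plan is to exhibit the two claimed sets as transitive components and then use the structural bound on the number of components to exclude any others. The hypothesis $(0\alpha,1\beta)\in\mathcal{B}^0(\omega,\nu)$ unwinds, by Definition \ref{0renormalisationbox}, into the inequalities $\omega^\infty\preccurlyeq 0\alpha\preccurlyeq\omega\nu^\infty$ and $\nu\omega^\infty\preccurlyeq 1\beta\preccurlyeq\nu^\infty$; deleting the leading symbol gives in particular $\sigma(\omega^\infty)\preccurlyeq\alpha$ and $\beta\preccurlyeq\sigma(\nu^\infty)$. By Theorem \ref{lemacajas1} the pair is renormalisable by $\omega$ and $\nu$, so I may also use the renormalisation expansions $0\alpha=\rho_{\omega,\nu}(0\alpha^{*})$ and $1\beta=\rho_{\omega,\nu}(1\beta^{*})$, where $(\alpha^{*},\beta^{*})$ is the renormalised pair.

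First I would settle the component $A$. From $\sigma(\omega^\infty)\preccurlyeq\alpha$ and $\beta\preccurlyeq\sigma(\nu^\infty)$, every $x$ with $\sigma(\nu^\infty)\preccurlyeq\sigma^n(x)\preccurlyeq\sigma(\omega^\infty)$ for all $n$ automatically satisfies $\beta\preccurlyeq\sigma^n(x)\preccurlyeq\alpha$, so $A=\Sigma_{(\sigma(\omega^\infty),\sigma(\nu^\infty))}\subseteq\Sigma_{(\alpha,\beta)}$; being a lexicographic subshift it is closed and $\sigma$-invariant. The pair $(\sigma(\omega^\infty),\sigma(\nu^\infty))$ is exactly the essential pair whose associated pair is $(\omega,\nu)$ (it is the excluded corner $(\omega^\infty,\nu^\infty)$ of the box), so by \cite[Theorem 1.3]{samuel}, together with \cite[Theorem 5.10]{yomero3}, $\sigma\mid_A$ is a transitive subshift of finite type. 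Maximality is the claim that $A$ cannot sit inside a strictly larger transitive subset: I would prove it by showing that any $x\in\Sigma_{(\alpha,\beta)}$ with some shift $\sigma^n(x)\succ\sigma(\omega^\infty)$ is forced by $0\alpha\preccurlyeq\omega\nu^\infty$ to continue with the block $\omega$ followed only by blocks from $\{\omega,\nu\}$ (and symmetrically for $\sigma^n(x)\prec\sigma(\nu^\infty)$), so that its orbit closure cannot return transitively to the interior of $A$.

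Next I would locate $B$. The same forcing shows that any recurrent point leaving the strip $[\sigma(\nu^\infty),\sigma(\omega^\infty)]$ has every tail a concatenation of $\omega$ and $\nu$, so the second transitive component lies inside the renewal system $\{\omega,\nu\}^\infty$. For the strict inclusion I would observe that the admissible patterns of $\omega,\nu$-blocks are precisely those whose coding sequence lies in $\Sigma_{(\alpha^{*},\beta^{*})}$, via $0\alpha=\rho_{\omega,\nu}(0\alpha^{*})$ and $1\beta=\rho_{\omega,\nu}(1\beta^{*})$; since $(\alpha,\beta)\in\mathcal{LW}$ is not the full shift, $\Sigma_{(\alpha^{*},\beta^{*})}\subsetneq\Sigma_2$, and because $\rho_{\omega,\nu}$ is injective on sequences this yields $B\subsetneq\{\omega,\nu\}^\infty$. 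Transitivity of $B$ would then follow from that of the renormalised system carried up through $\rho_{\omega,\nu}$, which is a conjugacy onto its image up to the finitely many boundary shifts.

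Finally, to obtain exactly two components I would invoke \cite[Theorem 6.3]{bundfuss} with \cite[Corollary 3.21]{yomero3}: as $\Sigma_{(\alpha,\beta)}$ is of finite type, $\alpha$ and $\beta$ are periodic and there are at most four transitive components, each attached to the orbit of $\alpha$ or of $\beta$. The dichotomy above—every recurrent point either stays in the strip, hence lies in $A$, or is eventually block-structured, hence lies in $B$—shows that the upper and lower excursions both feed the single renewal component $B$, collapsing the four potential components to the two described. The hard part will be the forcing/maximality step: proving rigorously that crossing the threshold $\sigma(\omega^\infty)$ (respectively $\sigma(\nu^\infty)$) compels the entire tail to decompose into $\omega,\nu$-blocks governed by $\Sigma_{(\alpha^{*},\beta^{*})}$. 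This single estimate simultaneously delivers the transitivity of $B$, the strict inclusion $B\subsetneq\{\omega,\nu\}^\infty$, the separation of $A$ from $B$ as distinct components, and, through the Bundfuss bound, the exact count of two.
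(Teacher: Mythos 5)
Your overall architecture is the same as the paper's (the component $A$ coming from the essential pair $(\sigma(\omega^{\infty}),\sigma(\nu^{\infty}))$, a forcing argument pushing every orbit that escapes the strip into $\left\{\omega,\nu\right\}^{\infty}$, and a dichotomy yielding exactly two components), but what you label ``the hard part'' and defer is precisely the content of the paper's proof. The paper carries out that forcing symbol by symbol: if $\omega 1$ occurs in $x\in\Sigma_{(\alpha,\beta)}$, then the next $\ell(\nu)$ symbols are forced to be $\nu$, and after $\omega\nu$ the next block must again be $\omega$ or $\nu$, since any deviation produces either $\sigma^{\ell(\omega)+\ell(\nu)}(x)\succ\alpha$ or $\sigma^{\ell(\omega)}(x)\prec\beta$; this single computation is what yields $B\subset\left\{\omega,\nu\right\}^{\infty}$, separates $A$ from $B$, and, through the maximality of $B$ as the largest completely invariant subset of $C=\left\{x : \omega 1 \text{ or } \nu 0 \text{ occurs in } x\right\}$, excludes any third component (the Bundfuss bound of at most four components, which you invoke, is not needed for this and is not used by the paper). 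A proposal that leaves this step as an acknowledged gap has not proved the lemma.

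Two of the steps you do spell out are also flawed. First, your strict-inclusion argument rests on the inference that $\Sigma_{(\alpha,\beta)}\neq\Sigma_2$ implies $\Sigma_{(\alpha^{*},\beta^{*})}\subsetneq\Sigma_2$, which is false: at the corner $(0\alpha,1\beta)=(\omega\nu^{\infty},\nu\omega^{\infty})$ the subshift $\Sigma_{(\alpha,\beta)}$ is proper while its renormalisation is all of $\Sigma_2$. What rescues the conclusion is the finite-type hypothesis ($\alpha$ and $\beta$ are then periodic, so the corner is excluded), but you never invoke it; the paper avoids the issue entirely by exhibiting $\omega^{\infty}\in\left\{\omega,\nu\right\}^{\infty}\setminus B$ (no shift of $\omega^{\infty}$ contains $\omega 1$ or $\nu 0$) and by checking $\omega\nu^{\infty}\notin B$ using the periodicity of $\alpha$. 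Second, you derive the transitivity of $B$ from the transitivity of the renormalised system $\Sigma_{(\alpha^{*},\beta^{*})}$, but nothing in the hypotheses makes $(\alpha^{*},\beta^{*})$ essential: it may itself be renormalisable, in which case $\Sigma_{(\alpha^{*},\beta^{*})}$ is \emph{not} transitive by \cite[Theorem 5.13]{yomero3}, so this route genuinely breaks rather than merely being incomplete. The paper does not rest on any property of the renormalised pair here: it constructs $B$ as the maximal completely invariant subset of $C$ and runs the exactness-of-two argument through that maximality.
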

\begin{proof}
From \cite[Theorem 5.13]{yomero3} gives that $\left(\Sigma_{(\alpha,\beta)}, \sigma_{(\alpha,\beta)}\right)$ is not transitive. Moreover, from the proof of \cite[Theorem 5.13]{yomero3} we obtain that there is no bridge between the words $\omega1$ and $\nu^{n_1^{\nu}+1}$. Also, it is possible to show that there is no bridge between $\nu0$ and $\omega^{m_1^{\omega}+1}$. Thus, $A = \Sigma_{(\sigma(\omega^{\infty}), \sigma(\nu^{\infty}))}$ is a transitive component of $\Sigma_{(\alpha, \beta)}$ since $(\sigma(\omega^{\infty}), \sigma(\nu^{\infty}))$ is essential. Observe that $$C = \left\{x \in \Sigma_{(\alpha, \beta)} : \omega1 \hbox{\rm{ or }} \nu0 \hbox{\rm{ occurs in }} x \right\}$$ determines a transitive component $B$. We construct such component $B$ as follows: firstly, note that $C$ is not necessarily an invariant set. Let $B$ the maximal invariant set of $C$, that is $$B = \mathop{\bigcap}\limits_{n = -\infty}^{\infty} \sigma^{n}(C).$$ Let us show that $B \subsetneq \left\{\omega, \nu\right\}^{\infty}$. 
Let $x \in B$. Without losing generality we can assume that $\omega1$ is a factor of $x$ and that $x_i = \omega_i$ for $i \in \left\{1, \ldots, \ell(\omega)\right\}$ and $x_{\ell(\omega)+1} = 1$. Observe that $x_{i+ \ell(\omega)} = \nu_{i}$ for $i \in \left\{1, \ldots, \ell(\nu)\right\}$ since $x \in \Sigma_{(\alpha, \beta)}$. Then $x_{\ell(\omega)+\ell(\nu)+1}$ is free. We claim that $x_{\ell(\omega)+\ell(\nu)+1} = 0$ then $x_{\ell(\omega)+ \ell(\nu)+i} = \omega_i$ for $i \in \left\{1, \ldots, \ell(\omega)\right\}$. Assume that the claim is false. Then there is $i \in \left\{1, \ldots, \ell(\omega)\right\}$ such that $\omega_i \neq x_{\ell(\omega)+ \ell(\nu)+i}$. If $\omega_i = 0$ and $x_{\ell(\omega)+ \ell(\nu)+i} = 1$ then $\sigma^{\ell(\omega)+ \ell(\nu)}(x) \succ \alpha$ which is a contradiction. Similarly, if $\omega_i = 1$ and $x_{\ell(\omega)+ \ell(\nu)+i} = 0$ then $\sigma^{\ell(\omega)}(x) \prec \beta$ which is a contradiction as well. We also claim that $x_{\ell(\omega)+\ell(\nu)+1} = 1$ then $x_{\
ell(\omega)+ \ell(\nu)+i} = \nu_i$ for $i \in \left\{1, \ldots, \ell(\omega)\right\}$. Assume that the claim is false. Then there is $i \in \left\{1, \ldots, \ell(\nu)\right\}$ such that $\omega_i \neq x_{\ell(\omega)+ \ell(\nu)+i}$. If $\nu_i = 0$ and $x_{\ell(\omega)+ \ell(\nu)+i} = 1$ then $\sigma^1(x) \succ \alpha$ which is a contradiction. Similarly, if $\nu_i = 1$ and $x_{\ell(\omega)+ \ell(\nu)+i} = 0$ then $\sigma^{\ell(\omega)}(x) \prec \beta$ which is a contradiction as well. Thus we have proven that $B \subset \left\{\omega,\nu\right\}^{\infty}$. To show that $B \subsetneq \left\{\omega,\nu\right\}^{\infty}$ observe that $\omega^{\infty} \notin B$. Also, it is possible to consider the sequence $\omega\nu^{\infty}$ and show that $\omega\nu^{\infty} \notin B$ as follows. Since $\left(\Sigma_{(\alpha, \beta)}, \sigma_{(\alpha, \beta)}\right)$ is a subshift of finite type, then $\alpha$ is periodic and $0\alpha = \omega\nu^{n_1^{\omega}}\omega^{n_1^{\omega}}\nu^{n_2^{\nu}}\ldots$. Thus $\sigma(\omega\
nu^{\infty}) \prec \alpha$. Therefore $\sigma(\omega\nu^{\infty}) \notin \Sigma_{(\alpha, \beta)}$. This shows that $B \subsetneq \left\{\omega, \nu\right\}^{\infty}$. Assume that there is another component $D$ such that $D \neq A$ and $D \neq B$. This implies that $D \cap (A \cup B) = \emptyset$. Thus, $D \subset C \cap B$. Since $C$ is the maximal invariant and $D$ is invariant we have that $D = \emptyset$ and the proof is finished.     
\end{proof}

\begin{lemma}
Let $(\alpha, \beta)$ satisfying the same hypothesis of Lemma \ref{prepara1}. Then $$h_{top}(\sigma_{B}) < h_{top}\left(\sigma_{\left\{\omega, \nu\right\}^{\infty}}\right).$$ \label{prepara2}
\end{lemma}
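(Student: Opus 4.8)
The plan is to deduce the strict inequality from the fact that $B$ is a \emph{proper} subshift of the transitive sofic renewal system $\{\omega,\nu\}^{\infty}$, combined with the intrinsic ergodicity of the latter. The guiding principle is standard: a proper subshift of a transitive sofic shift must forbid some word that the ambient system realises with positive frequency, and forbidding such a word strictly lowers the entropy. First I would record the structural input. By Lemma \ref{prepara1} we have the strict inclusion $B \subsetneq \{\omega,\nu\}^{\infty}$, where both sets are subshifts (closed and $\sigma$-invariant); moreover $\{\omega,\nu\}^{\infty}$ is a one-sided uniquely decipherable renewal system, hence a transitive sofic subshift, with $h_{top}(\sigma_{\{\omega,\nu\}^{\infty}}) = \log(1/\lambda)$ for $\lambda$ the root of $1 - t^{\ell(\omega)} - t^{\ell(\nu)}$ in $[0,1]$.

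Next I would extract an explicit forbidden factor. Since $B \subsetneq \{\omega,\nu\}^{\infty}$, choose a point $x \in \{\omega,\nu\}^{\infty}\setminus B$; as $B$ is closed there is $k \in \mathbb{N}$ such that every sequence agreeing with $x$ on its first $k$ coordinates lies outside $B$, so no point of $B$ has $u := x_1\ldots x_k$ as a prefix. By complete invariance of $B$, if some $b \in B$ had $u$ as a factor at position $i$ then $\sigma^{i-1}(b) \in B$ would have $u$ as a prefix, a contradiction; hence $u$ is a factor of no element of $B$. On the other hand $u \in \mathcal{L}(\{\omega,\nu\}^{\infty})$, being a prefix of $x$. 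Thus $B \subseteq X_u := \{y \in \{\omega,\nu\}^{\infty} : u \text{ is not a factor of } y\}$, a proper subshift of finite type of $\{\omega,\nu\}^{\infty}$.

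Finally I would run the entropy comparison. Being transitive sofic, $\{\omega,\nu\}^{\infty}$ is intrinsically ergodic \cite{weiss1, weiss2}, and its unique measure of maximal entropy $\mu$ has full support, so $\mu([u]) > 0$ because $u \in \mathcal{L}(\{\omega,\nu\}^{\infty})$. The compact subshift $X_u$ carries a measure of maximal entropy $m$ with $h_m(\sigma) = h_{top}(\sigma_{X_u})$, and since $[u]\cap X_u = \emptyset$ we have $m([u]) = 0 \neq \mu([u])$, whence $m \neq \mu$. Regarding $m$ as an invariant measure on $\{\omega,\nu\}^{\infty}$, the variational principle gives $h_m(\sigma) \leq h_{top}(\sigma_{\{\omega,\nu\}^{\infty}})$ with equality only for a maximal measure; as $m$ is not the unique maximal measure $\mu$, the inequality is strict. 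Combining the estimates yields $h_{top}(\sigma_B) \leq h_{top}(\sigma_{X_u}) = h_m(\sigma) < h_{top}(\sigma_{\{\omega,\nu\}^{\infty}})$, as required.

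The main obstacle to watch is the step asserting that the maximal measure of $\{\omega,\nu\}^{\infty}$ has full support, so that $\mu([u]) > 0$ for \emph{every} legal word $u$; this rests on the fact that a transitive sofic shift realises its maximal measure as the pushforward of the Parry measure of an irreducible right-resolving presentation, which is fully supported. If one prefers an explicit route bypassing this, the forbidden word can be named directly: since $(\Sigma_{(\alpha,\beta)},\sigma_{(\alpha,\beta)})$ is of finite type, $\alpha$ is periodic, so the renormalisation pattern $0\alpha = \omega\nu^{n_1^{\nu}}\omega^{n_1^{\omega}}\cdots$ bounds the number of consecutive $\omega$-blocks permitted inside $\Sigma_{(\alpha,\beta)}$. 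A run $\omega^{N}$ exceeding that bound lies in $\mathcal{L}(\{\omega,\nu\}^{\infty})$ but is forbidden in $B$, and one may take it as $u$. Verifying that such a run violates the constraint $\beta \preccurlyeq \sigma^{n}(x)$ is the only genuinely computational point, and it is precisely the block-level reading of the arguments already used in the proof of Lemma \ref{prepara1}.
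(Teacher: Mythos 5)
Your proposal is correct, and its skeleton is the same as the paper's: both arguments reduce the lemma to the fact that $B$ is a \emph{proper} subshift of the irreducible sofic renewal system $\left\{\omega,\nu\right\}^{\infty}$, and then appeal to a strict entropy drop for proper subshifts of irreducible sofic shifts. The two proofs diverge in how each step is justified. For properness, the paper produces an explicit witness: citing \cite[Lemma 5.3]{yomero3}, it notes that the periodic orbit $(\omega\nu^{n_1^{\nu}+1})^{\infty}$ lies outside $B$; you instead take properness directly from the conclusion $B \subsetneq \left\{\omega,\nu\right\}^{\infty}$ of Lemma \ref{prepara1} and extract a forbidden word by a compactness and complete-invariance argument, which is softer but equally valid. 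For the entropy gap, the paper (after remarking that $B$ is itself of finite type) simply cites the standard result --- the reference given as \cite[Corollary 4.49]{lind} is evidently meant to be Corollary 4.4.9 of Lind--Marcus \cite{lindmarcus}, that a proper subshift of an irreducible sofic shift has strictly smaller entropy --- whereas you reprove that result ergodic-theoretically: intrinsic ergodicity of transitive sofic shifts \cite{weiss1, weiss2}, existence of a measure of maximal entropy on the expansive subshift $X_u$, and the variational principle, with the single external input (which you correctly isolate) that the unique maximal measure of $\left\{\omega,\nu\right\}^{\infty}$ has full support, being the pushforward of the Parry measure of an irreducible right-resolving presentation. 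Your route buys independence from the combinatorial machinery of \cite{lindmarcus} and applies verbatim to any proper closed invariant subset of an intrinsically ergodic subshift with fully supported maximal measure; the paper's route buys brevity and a concrete excluded orbit that is reused in the renormalised setting (Lemma \ref{prepara3renorm}). One small point of agreement worth noting: your fallback explicit forbidden word $\omega^{N}$ works for the reason you state at the end --- long $\omega$-runs violate the constraint $\beta \preccurlyeq \sigma^{n}(x)$ coming from the pattern $1\beta = \nu\omega^{m_1^{\omega}}\cdots$ --- and this is the same block-level mechanism underlying the paper's choice of excluded periodic point.
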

\begin{proof}
Since both $\alpha$ and $\beta$ are renormalisable periodic sequences then $$0\alpha = (\omega\nu^{n_1^{\nu}}\omega^{n_1^{\omega}}\nu^{n_2^{\nu}}\omega^{n_2^{\omega}} \ldots \nu^{n_i^{\nu}}\omega^{n_i^{\omega}})^{\infty}$$ and $$1\beta = (\nu\omega^{m_1^{\omega}}\nu^{m_1^{\nu}}\omega^{m_2^{\omega}}\nu^{m_2^{\nu}} \ldots \omega^{m_j^{\omega}}\nu^{m_j^{\nu}})^{\infty}.$$ Note that $n_i^{\omega}$ and $m_{j}^{\nu}$ can be equal to $0$. However, this will not affect our argument. Let $B$ the transitive component defined in Lemma \ref{prepara1}. Since $\left(\Sigma_{(\alpha, \beta)}, \sigma_{(\alpha, \beta)}\right)$ is a subshift of finite type $\left(B, \sigma_{(\alpha, \beta)_{B}}\right)$ is a subshift of finite type - see \cite[p. 1313]{bundfuss}. From \cite[Lemma 5.3]{yomero3} we have that the periodic orbit $(\omega\nu^{n_1^{\nu}+1})^{\infty} \in \Sigma_{(\alpha, \beta)} \setminus B$. Then, from \cite[Corollary 4.49]{lind} and Lemma \ref{prepara1} we have that $h_{top}(\sigma_B) < h_{top}\left(\sigma_{\left\{\omega, \nu\right\}^{\infty}}\right)$.
\end{proof}

\begin{theorem}
$\left(\Sigma_{(\alpha,\beta)}, \sigma_{(\alpha,\beta)}\right)$ has a unique transitive component of maximal entropy for every subshift of finite type $\left(\Sigma_{(\alpha,\beta)}, \sigma_{(\alpha,\beta)}\right)$ whenever $(0\alpha,1\beta) \in \pi^{-1}(D_2) \cap \mathcal{LW}^{\prime}$. \label{lemota}
\end{theorem}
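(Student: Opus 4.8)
The plan is to reduce Theorem \ref{lemota} to the two structural results just established, Lemma \ref{prepara1} and Lemma \ref{prepara2}, by combining them with the decomposition of $\pi^{-1}(D_2)$ furnished by Corollary \ref{lascajascubren1}. Since we are only asked about subshifts of finite type $(\Sigma_{(\alpha,\beta)},\sigma_{(\alpha,\beta)})$ with $(0\alpha,1\beta)\in\pi^{-1}(D_2)\cap\mathcal{LW}^{\prime}$, both $\alpha$ and $\beta$ must be periodic sequences, and by Corollary \ref{lascajascubren1} the pair $(\alpha,\beta)$ falls into exactly one of three classes: essential, renormalisable (lying in some $\mathcal{B}^0(\omega,\nu)$), or coded-but-not-of-finite-type. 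The coded case is immediately excluded by hypothesis, so I treat only the essential and renormalisable cases.

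First I would dispose of the essential case. If $(\alpha,\beta)$ is essential, then by \cite[Theorem 1.3]{samuel} the subshift $(\Sigma_{(\alpha,\beta)},\sigma_{(\alpha,\beta)})$ is itself a transitive subshift of finite type, so it has a single transitive component, namely all of $\Sigma_{(\alpha,\beta)}$, and the statement is trivially true.

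Next I would handle the renormalisable case, which is where the content lies. Here $(0\alpha,1\beta)\in\mathcal{B}^0(\omega,\nu)\setminus\{(\omega^\infty,\nu^\infty)\}$ for a unique associated pair $(\omega,\nu)$ by Theorem \ref{lemacajas1} and Lemma \ref{cajas1}. Applying Lemma \ref{prepara1}, the subshift has exactly two transitive components, $A=\Sigma_{(\sigma(\omega^\infty),\sigma(\nu^\infty))}$ and $B\subsetneq\{\omega,\nu\}^\infty$. To prove uniqueness of the component of maximal entropy, I would compare $h_{top}(\sigma_A)$ and $h_{top}(\sigma_B)$. Since $A$ is the lexicographic subshift of the essential pair $(\sigma(\omega^\infty),\sigma(\nu^\infty))$, its entropy is that of the full renormalised system, which by the entropy/renormalisation correspondence coincides with $h_{top}(\sigma_{\{\omega,\nu\}^\infty})$; indeed the free renewal system $\{\omega,\nu\}^\infty$ realises the entropy of the subshift associated to $(\sigma(\omega^\infty),\sigma(\nu^\infty))$ via the substitution $\rho_{\omega,\nu}$. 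Combining this with Lemma \ref{prepara2}, which gives $h_{top}(\sigma_B)<h_{top}(\sigma_{\{\omega,\nu\}^\infty})=h_{top}(\sigma_A)$, we conclude $h_{top}(\sigma_B)<h_{top}(\sigma_A)$, so $A$ is the unique transitive component of maximal entropy.

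\textbf{The main obstacle} is the identification $h_{top}(\sigma_A)=h_{top}(\sigma_{\{\omega,\nu\}^\infty})$: one must argue carefully that the entropy of the essential (renormalised) system $A=\Sigma_{(\sigma(\omega^\infty),\sigma(\nu^\infty))}$ equals that of the free renewal system generated by $\omega$ and $\nu$. This should follow from the block structure of admissible words of $A$ together with the entropy formula of Glendinning and Hall and the renewal-system entropy computation $h_{top}(\sigma_{\{\omega,\nu\}^\infty})=\log(1/\lambda)$ with $1-t^{\ell(\omega)}-t^{\ell(\nu)}=0$, recorded in Section \ref{basic}; the substitution $\rho_{\omega,\nu}$ furnishes the semiconjugacy realising this equality, and one checks it preserves entropy because the coding is uniquely decipherable. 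If instead one prefers to avoid this identification, an alternative is to observe directly from Lemma \ref{prepara1} and \cite[Corollary 4.49]{lind} that $B$ is a proper subshift of finite type of $\{\omega,\nu\}^\infty$ while $A$ contains a full renewal system of the same block alphabet, forcing the strict entropy gap. Either route closes the argument.
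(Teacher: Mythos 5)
Your overall architecture is the same as the paper's: split into the essential case (trivial, by \cite[Theorem 1.3]{samuel}) and the case $(0\alpha,1\beta)\in\mathcal{B}^0(\omega,\nu)$, invoke Lemma \ref{prepara1} for the two components $A$ and $B$, and Lemma \ref{prepara2} for $h_{top}(\sigma_B)<h_{top}(\sigma_{\{\omega,\nu\}^{\infty}})$. The gap is exactly in the step you flag as the main obstacle: the claimed identity $h_{top}(\sigma_A)=h_{top}(\sigma_{\{\omega,\nu\}^{\infty}})$ is false in general, and both justifications you offer for it fail. The substitution $\rho_{\omega,\nu}$ codes the \emph{full} shift onto the renewal system $\{\omega,\nu\}^{\infty}$; it bears no relation to $A=\Sigma_{(\sigma(\omega^{\infty}),\sigma(\nu^{\infty}))}$, whose points are in general not concatenations of $\omega$ and $\nu$ at all. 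Your alternative claim that $A$ contains the full renewal system is equally false. Concretely, take $\omega=01$, $\nu=1000$ (the associated pair of the essential pair $((10)^{\infty},(0001)^{\infty})$): here $A$ is the transitive SFT forbidding $11$ and $0000$, while the concatenation $01\,1000$ contains the factor $11$, so $\{\omega,\nu\}^{\infty}\not\subset A$; conversely $(100)^{\infty}\in A\setminus\{\omega,\nu\}^{\infty}$, so neither subshift contains the other. For this pair one computes $h_{top}(\sigma_A)=\log\lambda$ with $\lambda^{4}=\lambda^{2}+\lambda+1$ (roughly $\log 1.466$), whereas $h_{top}(\sigma_{\{\omega,\nu\}^{\infty}})=\log s$ with $s^{4}=s^{2}+1$ (roughly $\log 1.272$): strict inequality. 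Indeed, equality holds precisely for the exceptional Hofbauer--Glendinning--Hall families of Theorem \ref{canon1}, and its exceptional nature is the whole reason those corner pairs fail to be intrinsically ergodic while generic ones do not (Corollary \ref{ergodicidadintrinseca3}); if your equality held in general, that corollary would be false.

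What the paper actually uses at this point, and what your argument is missing, is the one-sided bound $h_{top}(\sigma_{(\sigma(\omega^{\infty}),\sigma(\nu^{\infty}))})\geq h_{top}(\sigma_{\{\omega,\nu\}^{\infty}})$, quoted from \cite[Lemma 8, Lemma 9]{hall}. This is a statement about the smallest zero of the entropy formula $K_{(\alpha,\beta)}(t)$ of the essential pair relative to the root of $1-t^{\ell(\omega)}-t^{\ell(\nu)}$; it cannot be derived from an inclusion of subshifts or from an entropy-preserving coding, since, as the example shows, no such inclusion or coding exists. Once this inequality is granted, the chain $h_{top}(\sigma_B)<h_{top}(\sigma_{\{\omega,\nu\}^{\infty}})\leq h_{top}(\sigma_A)$ closes the proof exactly as you intend; note that only the inequality is needed, since the theorem requires $h_{top}(\sigma_A)>h_{top}(\sigma_B)$ and nothing about the precise value of $h_{top}(\sigma_A)$. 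So your proof skeleton is salvageable, but as written its decisive step rests on two incorrect claims and therefore does not go through.
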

\begin{proof}
Observe that if $(\alpha, \beta)$ is an essential pair then our result is automatically true. Suppose now that $(0\alpha, 1\beta) \in \mathcal{B}^0(\omega, \nu)$. Then the transitive components of $(\Sigma_{(\alpha,\beta)}, \sigma_{(\alpha,\beta)})$ are given by $\Sigma_{(\sigma(\omega^{\infty}), \sigma(\nu^{\infty}))}$ and the set $B$ constructed in Lemma \ref{prepara1}. From \cite[Proposition 2.5.5]{brin} and Lemma \ref{prepara1} we have that $h_{top}(\sigma_{(\alpha, \beta)}) = \max \left\{h_{top}(\sigma_{(\sigma(\omega^{\infty}, \nu^{\infty}))}), h_{top}(\sigma_B)\right\}$. From Lemma \ref{prepara2} we have that $h_{top}(\sigma_{B}) < h_{top}(\sigma_{\left\{\omega, \nu \right\}^{\infty}})$. Note that $h_{top}(\sigma_{\left\{\omega, \nu \right\}^{\infty}}) = \log\lambda$ where $\frac{1}{\lambda}$ is the unique root of $1 - t^{\ell(\omega)} - t^{\ell(\nu)}$ in $[0,1]$ - \cite[Remark 2.1]{hong} and \cite[p. 1008]{hall}. From \cite[Lemma 8, Lemma 9]{hall} we have that $h_{top}(\sigma_{(\sigma(\omega^{\infty}),\sigma(\nu^{\infty}))}) \geq h_{top}(\sigma_{\left\{\omega, \nu \right\}^{\infty}}) > h_{top}(\sigma_B)$ which proves our result.  
\end{proof}

\begin{corollary}
If $(\alpha, \beta) \in \mathcal{LW}$ such that $(0\alpha, 1\beta) \in \mathcal{B}^0(\omega, \nu)$ with $$(0\alpha, 1\beta) \neq (\omega\nu^{\infty}, \nu\omega^{\infty}),$$ then $(\Sigma_{(\alpha, \beta)}, \sigma_{(\alpha, \beta)})$ has a unique transitive component of maximal entropy.\label{notipofinito1}
\end{corollary}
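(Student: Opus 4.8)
The plan is to deduce the corollary from the finite--type case already settled in Lemmas \ref{prepara1} and \ref{prepara2} (that is, from the proof of Theorem \ref{lemota}) by a sandwiching argument, exhibiting in every case the fixed inner component $A = \Sigma_{(\sigma(\omega^{\infty}),\sigma(\nu^{\infty}))}$ as the unique carrier of the maximal entropy. I would first record the two features of $A$ that are insensitive to the particular pair. Since $(0\alpha,1\beta) \in \mathcal{B}^0(\omega,\nu)$ forces $\sigma(\omega^{\infty}) \preccurlyeq \alpha$ and $\beta \preccurlyeq \sigma(\nu^{\infty})$, the essential subshift $A$ is contained in $\Sigma_{(\alpha,\beta)}$ and is a transitive component there, the separation from the remainder being exactly the no--bridge comparison used in Lemma \ref{prepara1}, which is purely lexicographic and therefore survives the passage to coded pairs. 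Moreover its entropy $h_{A} = h_{top}(\sigma_{(\sigma(\omega^{\infty}),\sigma(\nu^{\infty}))})$ satisfies $h_{A} \geq h_{top}(\sigma_{\{\omega,\nu\}^{\infty}})$ by \cite[Lemma 8, Lemma 9]{hall}.

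The case analysis is then short. If $(0\alpha,1\beta) = (\omega^{\infty},\nu^{\infty})$ the pair is essential and $\Sigma_{(\alpha,\beta)} = A$ is transitive, so the conclusion is immediate. If $\left(\Sigma_{(\alpha,\beta)},\sigma_{(\alpha,\beta)}\right)$ is a subshift of finite type then, being in the box and distinct from the corner, it is renormalisable by $(\omega,\nu)$ by Theorem \ref{lemacajas1}, and Lemmas \ref{prepara1} and \ref{prepara2} together with the inequality above give the two components $A$ and $B \subsetneq \{\omega,\nu\}^{\infty}$ with $h_{top}(\sigma_B) < h_{top}(\sigma_{\{\omega,\nu\}^{\infty}}) \leq h_{A}$. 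By Theorem \ref{lemacajas1} the only remaining possibility is that $(\alpha,\beta)$ is coded but not of finite type and renormalisable by $(\omega,\nu)$; repeating the argument of Lemma \ref{prepara1} verbatim shows that, apart from $A$, every transitive component of $\Sigma_{(\alpha,\beta)}$ is a $\sigma$--invariant subset of $\{\omega,\nu\}^{\infty} \cap \Sigma_{(\alpha,\beta)}$ in which $\omega1$ or $\nu0$ occurs, so it suffices to bound the entropy of this outer part strictly below $h_{A}$.

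For the strict bound I would not compute the coded entropy directly but instead trap $\Sigma_{(\alpha,\beta)}$ inside a finite--type system still lying in the box. Because $(\alpha,\beta)$ is coded, its renormalised pair $(\hat\alpha,\hat\beta) = \rho_{\omega,\nu}^{-1}(0\alpha,1\beta)$ is aperiodic, so $\hat\alpha \prec 1^{\infty}$ and $0^{\infty} \prec \hat\beta$; I can therefore choose a periodic admissible pair $(\hat\alpha',\hat\beta')$ with $\hat\alpha \prec \hat\alpha' \prec 1^{\infty}$ and $0^{\infty} \prec \hat\beta' \prec \hat\beta$, whose substitution $(0\alpha',1\beta') = \rho_{\omega,\nu}(\hat\alpha',\hat\beta')$ is a periodic, hence finite--type, pair lying in $\mathcal{B}^0(\omega,\nu) \setminus \{(\omega\nu^{\infty},\nu\omega^{\infty})\}$ and satisfying $\Sigma_{(\alpha,\beta)} \subseteq \Sigma_{(\alpha',\beta')}$. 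Applying the finite--type case to $(\alpha',\beta')$ produces its outer component $B_{(\alpha',\beta')} \subsetneq \{\omega,\nu\}^{\infty}$ with $h_{top}(\sigma_{B_{(\alpha',\beta')}}) < h_{top}(\sigma_{\{\omega,\nu\}^{\infty}}) \leq h_{A}$. Since every outer transitive component of $\Sigma_{(\alpha,\beta)}$ is an invariant subset of $\{\omega,\nu\}^{\infty} \cap \Sigma_{(\alpha,\beta)} \subseteq \Sigma_{(\alpha',\beta')}$ meeting $\omega1$ or $\nu0$, it is contained in the maximal invariant set $B_{(\alpha',\beta')}$, so its entropy is at most $h_{top}(\sigma_{B_{(\alpha',\beta')}}) < h_{A}$. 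Hence $A$ is the unique transitive component of maximal entropy.

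The step I expect to be the main obstacle is the construction of this sandwiching finite--type pair, which rests on $\rho_{\omega,\nu}$ being an order isomorphism of the relevant lexicographic intervals: one must check that substituting an admissible periodic pair $(\hat\alpha',\hat\beta') \neq (1^{\infty},0^{\infty})$ yields a genuinely renormalisable admissible pair in the box properly short of the corner $(\omega\nu^{\infty},\nu\omega^{\infty})$, and that $\hat\alpha \prec \hat\alpha'$ together with $\hat\beta' \prec \hat\beta$ is equivalent to the inclusion $\Sigma_{(\alpha,\beta)} \subseteq \Sigma_{(\alpha',\beta')}$ of the substituted systems. This order--preserving dictionary between a box and its renormalised full shift is essentially the content underlying Definition \ref{0renormalisationbox} and Theorem \ref{lemacajas1}; once it is made precise, the strict entropy gap for coded pairs follows for free from the finite--type gap, and no separate analysis of coded entropy is needed.
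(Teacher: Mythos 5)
Your proposal follows the same overall strategy as the paper's proof: isolate the fixed inner component $A = \Sigma_{(\sigma(\omega^{\infty}),\sigma(\nu^{\infty}))}$ with $h_{top}(\sigma\mid_A) \geq h_{top}(\sigma_{\{\omega,\nu\}^{\infty}})$ via \cite[Lemma 8, Lemma 9]{hall}, show that every other transitive component lies in the outer part where $\omega1$ or $\nu0$ occurs, and bound that outer part by trapping $\Sigma_{(\alpha,\beta)}$ inside a finite-type system in the same box, whose outer component already has entropy strictly below $h_{top}(\sigma_{\{\omega,\nu\}^{\infty}})$ by Lemma \ref{prepara2}. The one place where you diverge is the construction of the sandwiching finite-type pair, and this is also where your version needs repair. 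The paper does not pull back and perturb: since Theorem \ref{lemacajas1} makes $(\alpha,\beta)$ renormalisable by $(\omega,\nu)$, it reads the trapping pair off explicitly from the first renormalisation exponents, using \cite[Lemma 5.6]{yomero3} to get $(\omega\nu^{n_1^{\nu}})^{\infty} \prec 0\alpha \prec (\omega\nu^{n_1^{\nu}+1})^{\infty}$ and $(\nu\omega^{m_1^{\omega}+1})^{\infty} \prec 1\beta \prec (\nu\omega^{m_1^{\omega}})^{\infty}$, so the containing subshift is the one with kneading pair $\left((\omega\nu^{n_1^{\nu}+1})^{\infty}, (\nu\omega^{m_1^{\omega}+1})^{\infty}\right)$, whose periodicity and admissibility are immediate; it then sandwiches $h_{top}(\sigma_B)$ between the outer entropies of the two explicit finite-type systems. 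This sidesteps exactly the step you flag as your main obstacle, namely the existence of a periodic \emph{admissible} pair $(\hat\alpha',\hat\beta')$ strictly between $(\hat\alpha,\hat\beta)$ and $(1^{\infty},0^{\infty})$: note that enlarging $\hat\alpha$ and shrinking $\hat\beta$ does not automatically preserve the cross conditions of Definition \ref{lexworld}, so this density claim is genuine work, not bookkeeping. Your construction also fails literally on the edges of the box: if, say, $0\alpha = \omega\nu^{\infty}$ but $1\beta \neq \nu\omega^{\infty}$ (allowed by the hypothesis, which only excludes the corner), then $\hat\alpha = 1^{\infty}$ and no $\hat\alpha'$ with $\hat\alpha \prec \hat\alpha' \prec 1^{\infty}$ exists. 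The repair is to require only $\hat\alpha \preccurlyeq \hat\alpha'$ and $\hat\beta' \preccurlyeq \hat\beta$ with $(\hat\alpha',\hat\beta') \neq (1^{\infty},0^{\infty})$, perturbing only the aperiodic coordinate (at least one is aperiodic in the non-finite-type case), or simply to replace your abstract pair by the paper's explicit one. With that adjustment your argument is sound and coincides in substance with the paper's.
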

\begin{proof}
Note that the results is automatically true if $(\alpha, \beta) = (\sigma(\omega^{\infty}),\sigma(\nu^{\infty}))$. From Theorem \ref{lemacajas1} we have that $(\alpha, \beta)$ is renormalisable by $\omega$ and $\nu$, i.e. $$0\alpha = \omega\nu^{n_1^{\nu}}\omega^{n_1^{\omega}}\nu^{n_2^{\nu}}\omega^{n_2^{\omega}} \ldots \nu^{n_i^{\nu}}\omega^{n_i^{\omega}}\ldots$$ and $$1\beta = \nu\omega^{m_1^{\omega}}\nu^{m_1^{\nu}}\omega^{m_2^{\omega}}\nu^{m_2^{\nu}} \ldots \omega^{m_j^{\omega}}\nu^{m_j^{\nu}}\ldots.$$  

Recall that $(\Sigma_{(\alpha, \beta)}, \sigma_{(\alpha, \beta)})$ is not a subshift of finite type. From \cite[Lemma 5.6]{yomero3} we have that  $(\omega\nu^{n_1^{\nu}})^{\infty} \prec 0\alpha \prec (\omega\nu^{n_1^{\nu}+1})^{\infty}$ and $(\nu\omega^{m_1^{\omega}+1})^{\infty} \prec 1\beta \prec (\nu\omega^{m_1^{\omega}})^{\infty}$. Let $B$ be the component defined in Lemma \ref{prepara1} for $(\Sigma_{(\alpha, \beta)}, \sigma_{(\alpha, \beta)})$ and $B^{\prime}$ and $B^{\prime \prime}$ the component constructed on Lemma \ref{prepara1} for $(\Sigma_{(\sigma((\omega\nu^{n_1^{\nu}})^{\infty}), \sigma((\nu\omega^{m_1^{\omega}})^{\infty}))}, \sigma_{(\sigma((\omega\nu^{n_1^{\nu}})^{\infty}), \sigma((\nu\omega^{m_1^{\omega}})^{\infty})})$ and $(\Sigma_{(\sigma((\omega\nu^{n_1^{\nu}+1})^{\infty}), \sigma((\nu\omega^{m_1^{\omega}+1})^{\infty})},\sigma_{(\sigma((\omega\nu^{n_1^{\nu}}+1)^{\infty}), \sigma((\nu\omega^{m_1^{\omega}+1})^{\infty})})$ respectively. Observe that $$h_{top}(\sigma_{B^{\prime}}) \leq h_{top}(
\sigma_{B}) \leq h_{top}(\sigma_{B^{\prime\prime}}).$$ Suppose that $(\Sigma_{(\alpha, \beta)}, \sigma_{(\alpha, \beta)})$ has a component $C$ such that $h_{top}(\sigma_C) = h_{top}(\sigma_{(\sigma(\omega^{\infty}),\sigma(\nu^{\infty}))})$. Observe that $C \neq B$ since $h_{top}(\sigma_C) > h_{top}(\sigma_B).$ Then neither $\omega1$ nor $\nu0$ are factors of $x$ for every $x \in C$. Then $$C = \left\{x \in \Sigma_{(\alpha, \beta)} : \hbox{\rm{ neither }}\omega1 \hbox{\rm{ nor }} \nu0 \hbox{\rm{ are factors of }} x\right\}.$$ Thus $C = \Sigma_{(\sigma(\omega^{\infty}), \sigma(\nu^{\infty}))}$.  
\end{proof}

\begin{theorem}
If $(\alpha, \beta)$ satisfies that $(0\alpha, 1\beta) \in \mathcal{B}^0(\omega, \nu)$ and $(\Sigma_{(\alpha, \beta)}, \sigma_{(\alpha, \beta)})$ is a subshift of finite type then $(\Sigma_{(\alpha, \beta)}, \sigma_{(\alpha, \beta)})$ is intrinsically ergodic. \label{ergodicidadintrinseca1}
\end{theorem}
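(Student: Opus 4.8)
The plan is to leverage the structure theory for the transitive components already established and to reduce the uniqueness of the measure of maximal entropy of $(\Sigma_{(\alpha,\beta)}, \sigma_{(\alpha,\beta)})$ to that of its dominant component. First I would invoke Lemma \ref{prepara1} to record that $\Sigma_{(\alpha,\beta)}$ has exactly two transitive components, $A = \Sigma_{(\sigma(\omega^{\infty}), \sigma(\nu^{\infty}))}$ and $B \subsetneq \{\omega,\nu\}^{\infty}$, the former being the lexicographic subshift of an essential pair and hence, by \cite[Theorem 1.3]{samuel}, a transitive subshift of finite type. Combining Theorem \ref{lemota} with \cite[Proposition 2.5.5]{brin} I then have $h_{top}(\sigma_{(\alpha,\beta)}) = \max\{h_{top}(\sigma_A), h_{top}(\sigma_B)\} = h_{top}(\sigma_A)$ and, crucially, $h_{top}(\sigma_A) > h_{top}(\sigma_B)$, so that $A$ is the unique transitive component realising the topological entropy of the whole system.

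The second step is to show that every measure of maximal entropy is carried by $A$. I would take an arbitrary $\mu \in \mathbb{M}(\sigma_{(\alpha,\beta)})$ with $h_{\mu}(\sigma_{(\alpha,\beta)}) = h_{top}(\sigma_A)$ and apply the ergodic decomposition $\mu = \int \mu_x\, d\mu(x)$ together with the affinity of the entropy map, giving $h_{top}(\sigma_A) = h_{\mu}(\sigma_{(\alpha,\beta)}) = \int h_{\mu_x}(\sigma_{(\alpha,\beta)})\, d\mu(x)$. Since the support of each ergodic component $\mu_x$ is a closed topologically transitive invariant set, it is contained in one of the two transitive components; any component supported in $B$ satisfies $h_{\mu_x}(\sigma_{(\alpha,\beta)}) \le h_{top}(\sigma_B) < h_{top}(\sigma_A)$, while $h_{top}(\sigma_A)$ is the largest value the integrand can attain. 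Hence the integrand can equal $h_{top}(\sigma_A)$ on a set of full $\mu$-measure only if $\mu$-almost every ergodic component is supported on $A$, so that $\mu$ itself is supported on $A$ and is a measure of maximal entropy for $(A, \sigma_A)$.

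To finish, since $A$ is a transitive subshift of finite type it is intrinsically ergodic by \cite{parry}, so it admits a unique measure of maximal entropy; pulling this back through the previous step forces $\mu$ to be unique, proving that $(\Sigma_{(\alpha,\beta)}, \sigma_{(\alpha,\beta)})$ is intrinsically ergodic. The main obstacle I anticipate is the measure-theoretic bookkeeping in the second step: one must justify rigorously that every ergodic invariant measure is genuinely carried by a single transitive component, so that no entropy is hidden in a wandering or transient part of $\Sigma_{(\alpha,\beta)}$, and that the entropy of an ergodic measure supported on $B$ is bounded by $h_{top}(\sigma_B)$ rather than merely by the entropy of the ambient system. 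Everything else is a direct consequence of the component analysis in Lemmas \ref{prepara1} and \ref{prepara2} and Theorem \ref{lemota}.
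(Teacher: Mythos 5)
Your proposal is correct and follows essentially the same route as the paper: identify $A = \Sigma_{(\sigma(\omega^{\infty}),\sigma(\nu^{\infty}))}$ as the unique transitive component of maximal entropy via Lemmas \ref{prepara1}, \ref{prepara2} and Theorem \ref{lemota}, and then reduce everything to intrinsic ergodicity of the transitive subshift of finite type $A$ via Parry. The only divergence is in the final measure-theoretic bookkeeping --- the paper constructs the candidate measure explicitly by restriction to $A$ and excludes a second measure of maximal entropy by mutual singularity of distinct ergodic measures (forcing its support off $A$ and hence its entropy down to $h_{top}(\sigma_B)$), while you use ergodic decomposition and affinity of the entropy map to show every measure of maximal entropy is carried by $A$; both variants rest on the same implicit fact you correctly flag, namely that ergodic measures not carried by $A$ have entropy at most $h_{top}(\sigma_B)$.
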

\begin{proof}
From Theorem \ref{lemota} $\Sigma_{(\sigma(\omega^{\infty}), \sigma(\nu^{\infty}))}$ is the unique transitive component of maximal entropy of $(\Sigma_{(\alpha, \beta)}, \sigma_{(\alpha, \beta)})$. Since $(\sigma(\omega^{\infty}), \sigma(\nu^{\infty}))$ is an essential pair we have that $$(\Sigma_{(\sigma(\omega^{\infty}), \sigma(\nu^{\infty}))}, \sigma_{(\sigma(\omega^{\infty}), \sigma(\nu^{\infty}))}) \hbox{\rm{ is intrinsically ergodic.}}$$ 

Let $\mu$ the measure of maximal entropy for $(\Sigma_{(\sigma(\omega^{\infty}), \sigma(\nu^{\infty}))}, \sigma_{(\sigma(\omega^{\infty}), \sigma(\nu^{\infty}))})$. We define $$\mu_{(\alpha, \beta)}(B) = \mu(B \cap  \Sigma_{(\sigma(\omega^{\infty}), \sigma(\nu^{\infty}))})$$ for every measurable set $B$. Note that $h_{\mu_{(\alpha, \beta)}} = h_{\mu}$ since $h_{top}(\sigma_{(\sigma(\omega^{\infty}), \sigma(\nu^{\infty}))})=h_{top}(\sigma_{(\alpha, \beta)})$. Thus, $\mu_{(\alpha,\beta)}$ is a measure of maximal entropy with $\hbox{\rm{supp}}(\mu_{(\alpha, \beta)}) = \hbox{\rm{supp}}(\mu)$ where $\hbox{\rm{supp}}$ denotes the support of the measures. Suppose that there is another measure of maximal entropy $\eta$. Since $\eta \neq \mu_{(\alpha, \beta)}$ and $\eta, \mu_{(\alpha, \beta)}$ are ergodic measures \cite[Theorem 8.7]{walters} we have that $\eta$ and $\mu_{(\alpha, \beta)}$ are mutually singular \cite[Theorem 6.10]{walters}. This implies that $$\mu_{(\alpha,\beta)}(\hbox{\rm{supp}}(\mu_{(\alpha,\beta)}) \cap \hbox{\rm{supp}}(\eta)) = \eta(\hbox{\rm{supp}}(\mu_{(\alpha,\beta)}) \cap \hbox{\rm{supp}}(\eta)) = 0.$$ This gives that $\hbox{\rm{supp}}(\eta) \subset \Sigma_{(\alpha,\beta)} \setminus \Sigma_{(\sigma(\omega^{\infty}),\sigma(\nu^{\infty}))}.$ Then by Theorem \ref{lemota} $h_{\eta} = h_{top}(\sigma_B)$. Which contradicts that $\eta$ is a measure of maximal entropy and our proof is complete.    
\end{proof}

As a consequence of Corollary \ref{notipofinito1}, we can extend Theorem \ref{ergodicidadintrinseca1} to subshifts which are not of finite type necessarily. This is stated in the following corollary. The proof is left to the reader.  

\begin{corollary}
Let $(\alpha, \beta)$ be a renormalisable pair by $\omega$ and $\nu$ such that $(0\alpha, 1\beta) \in \mathcal{B}^0(\omega, \nu)$ with $(0\alpha, 1\beta) \neq (\omega\nu^{\infty}, \nu\omega^{\infty})$. Then $(\Sigma_{(\alpha, \beta)}, \sigma_{(\alpha, \beta)})$ is intrinsically ergodic.\label{ergodicidadintrinseca2}
\end{corollary}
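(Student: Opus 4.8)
The plan is to reproduce the argument of Theorem~\ref{ergodicidadintrinseca1} almost verbatim, replacing its single appeal to Theorem~\ref{lemota} (which is stated only for subshifts of finite type) by Corollary~\ref{notipofinito1}, whose conclusion is valid in the present renormalisable but not necessarily finite-type setting. By Corollary~\ref{notipofinito1} the subshift $(\Sigma_{(\alpha,\beta)},\sigma_{(\alpha,\beta)})$ has a unique transitive component of maximal entropy, and, as in the proof of that corollary, this component is $A=\Sigma_{(\sigma(\omega^{\infty}),\sigma(\nu^{\infty}))}$. Since $(\sigma(\omega^{\infty}),\sigma(\nu^{\infty}))$ is an essential pair, $(A,\sigma_A)$ is a transitive subshift of finite type and hence intrinsically ergodic; I would let $\mu$ be its unique measure of maximal entropy and extend it to $\Sigma_{(\alpha,\beta)}$ by setting $\mu_{(\alpha,\beta)}(E)=\mu(E\cap A)$ for every Borel set $E$.

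First I would check that $\mu_{(\alpha,\beta)}$ is itself a measure of maximal entropy. As $A$ realises the maximal entropy we have $h_{top}(\sigma_A)=h_{top}(\sigma_{(\alpha,\beta)})$, and since $\mu_{(\alpha,\beta)}$ is carried by $A$ and agrees there with $\mu$, it satisfies $h_{\mu_{(\alpha,\beta)}}(\sigma_{(\alpha,\beta)})=h_{\mu}(\sigma_A)=h_{top}(\sigma_{(\alpha,\beta)})$; moreover $\mu_{(\alpha,\beta)}$ is ergodic because $A$ is completely invariant and $\mu$ is ergodic on $A$.

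For uniqueness, by affineness of the entropy map together with the ergodic decomposition it suffices to rule out any second \emph{ergodic} measure of maximal entropy $\eta\neq\mu_{(\alpha,\beta)}$. Since $A$ is completely invariant and $\eta$ is ergodic, $\eta(A)\in\{0,1\}$. If $\eta(A)=1$ then $\eta$ is a measure of maximal entropy for the intrinsically ergodic system $(A,\sigma_A)$, whence $\eta=\mu=\mu_{(\alpha,\beta)}$, a contradiction; so $\eta(A)=0$ and $\mathrm{supp}(\eta)\subset\Sigma_{(\alpha,\beta)}\setminus A$. The support of the ergodic measure $\eta$ is a closed invariant set on which $\sigma$ acts topologically transitively, hence it is contained in a transitive component $K\neq A$, and Corollary~\ref{notipofinito1} then forces $h_{top}(\sigma_K)<h_{top}(\sigma_{(\alpha,\beta)})$. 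This yields $h_{\eta}(\sigma_{(\alpha,\beta)})\le h_{top}(\sigma_K)<h_{top}(\sigma_{(\alpha,\beta)})$, contradicting the maximality of $\eta$. Thus $\mu_{(\alpha,\beta)}$ is the unique measure of maximal entropy, and $(\Sigma_{(\alpha,\beta)},\sigma_{(\alpha,\beta)})$ is intrinsically ergodic.

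The step I expect to demand the most care is the last one: in the finite-type case of Theorem~\ref{ergodicidadintrinseca1} the strict entropy drop was read off from the explicit two-component description in Lemma~\ref{prepara1}, whereas here one must instead use that the support of an ergodic measure is topologically transitive and therefore sits inside a transitive component, and then invoke the uniqueness supplied by Corollary~\ref{notipofinito1} to guarantee that every component other than $A$ has strictly smaller topological entropy. The harmless reduction to ergodic measures via \cite{walters} should also be recorded, since it is precisely what converts uniqueness among ergodic measures of maximal entropy into genuine intrinsic ergodicity.
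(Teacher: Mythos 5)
Your proposal is correct and is exactly the argument the paper intends: the paper states Corollary \ref{ergodicidadintrinseca2} as ``a consequence of Corollary \ref{notipofinito1}'' extending Theorem \ref{ergodicidadintrinseca1} and leaves the proof to the reader, and substituting Corollary \ref{notipofinito1} for Theorem \ref{lemota} in the proof of Theorem \ref{ergodicidadintrinseca1} is precisely that intended modification. Your write-up is, if anything, more careful than the paper's own model proof on two points it glosses over: the reduction to \emph{ergodic} measures of maximal entropy via the ergodic decomposition, and the justification that the support of such a measure, being a transitive closed invariant set, lies in some transitive component $K\neq A$ of strictly smaller entropy.
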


As a corollary of Theorem \ref{ergodicidadintrinseca1} and Corollary \ref{ergodicidadintrinseca2} we obtain the following statement.

\begin{corollary}
For almost every $(a,b) \in D_2$, $(\Lambda_{(a,b)}, f_{(a,b)})$ is intrinsically ergodic. \label{d2}
\end{corollary}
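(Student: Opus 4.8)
The plan is to reduce the almost-everywhere statement to the intrinsic ergodicity of lexicographic subshifts of finite type, which is exactly what Theorem \ref{ergodicidadintrinseca1} supplies. The single measure-theoretic input I would use is the fact recorded in the subsection above Definition \ref{0renormalisationbox}: by \cite[Corollary 3.4]{yomero3} the set $\mathcal{S}'$ of parameters $(a,b) \in D_1$ for which $(\Lambda_{(a,b)}, f_{(a,b)})$ is conjugate to a subshift of finite type has full Lebesgue measure. Since $D_2 \subseteq D_1$, the complement $D_2 \setminus \mathcal{S}'$ is contained in the null set $D_1 \setminus \mathcal{S}'$ and is therefore itself Lebesgue-null; hence almost every $(a,b) \in D_2$ lies in $\mathcal{S}'$, and it suffices to treat such parameters. (That the statement is non-vacuous follows from Lemma \ref{medidapositiva}, which gives $D_2$ positive measure.)

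First I would fix an arbitrary $(a,b) \in D_2 \cap \mathcal{S}'$ and, via \cite[Corollary 3.21]{yomero3}, pass to the associated pair $(\alpha,\beta) \in \mathcal{LW}$ for which $(\Lambda_{(a,b)}, f_{(a,b)})$ is topologically conjugate to $(\Sigma_{(\alpha,\beta)}, \sigma_{(\alpha,\beta)})$; by membership in $\mathcal{S}'$ this subshift is of finite type, and $(0\alpha, 1\beta) \in \pi^{-1}(D_2) \cap \mathcal{LW}'$. Next I would invoke the decomposition of Corollary \ref{lascajascubren1}, which writes $\pi^{-1}(D_2) \cap \mathcal{LW}'$ as the union of the renormalisation boxes $\mathcal{B}^0(\omega,\nu)$ over essential pairs $(\alpha',\beta') \in \mathcal{E}$ together with the coded part $\mathcal{C}'$. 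Because the members of $\mathcal{C}'$ are by construction coded systems that are not of finite type, our finite-type pair cannot lie in $\mathcal{C}'$, so $(0\alpha,1\beta) \in \mathcal{B}^0(\omega,\nu)$ for some $(\omega,\nu)$. Theorem \ref{ergodicidadintrinseca1} then applies verbatim and yields that $(\Sigma_{(\alpha,\beta)}, \sigma_{(\alpha,\beta)})$ is intrinsically ergodic; since uniqueness of the measure of maximal entropy is a topological-conjugacy invariant (a conjugacy induces an entropy-preserving bijection of invariant measures, hence of measures of maximal entropy), the same holds for $(\Lambda_{(a,b)}, f_{(a,b)})$. Letting $(a,b)$ range over the full-measure set $D_2 \cap \mathcal{S}'$ completes the argument.

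Since the genuine work is carried by Theorem \ref{ergodicidadintrinseca1} and \cite[Corollary 3.4]{yomero3}, the proof is essentially bookkeeping, and the one point deserving attention is the use of the decomposition: I must confirm that a lexicographic subshift of finite type is forced into a renormalisation box rather than into the coded part $\mathcal{C}'$, which holds precisely because the elements of $\mathcal{C}'$ fail to be of finite type. I would also remark that Corollary \ref{ergodicidadintrinseca2} is not logically required for the almost-everywhere conclusion, as the renormalisable pairs that are not of finite type constitute a Lebesgue-null subset of $\pi^{-1}(D_2)$; its role is to upgrade the pointwise conclusion to those discarded parameters, covering every box point except the corner $(\omega\nu^{\infty}, \nu\omega^{\infty})$.
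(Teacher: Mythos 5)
Your proposal is correct, and it rests on the same two pillars as the paper's own proof: the decomposition of $\pi^{-1}(D_2)\cap\mathcal{LW}^{\prime}$ into renormalisation boxes plus the coded part (Corollary \ref{lascajascubren1}) and the intrinsic ergodicity of finite-type box pairs (Theorem \ref{ergodicidadintrinseca1}). Where you genuinely diverge is in the measure-theoretic reduction, and your version is the sounder one. The paper argues that $\mathcal{S}$ (parameters conjugate to a \emph{transitive} subshift of finite type) is open and dense in $D_1$ and that $D_2$ is open, so $\mathcal{S}\cap D_2$ is open and dense in $D_2$, and then concludes; but an open dense set need not have full Lebesgue measure, so as written this step does not deliver the ``almost every'' statement and implicitly leans on the earlier observation that $\mathcal{S}^{\prime}$ (parameters conjugate to \emph{some} subshift of finite type) has full measure by \cite[Corollary 3.4]{yomero3}. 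You invoke that full-measure fact directly, intersect with $D_2$, and then only need the decomposition to force a finite-type pair into a renormalisation box rather than into $\mathcal{C}^{\prime}$, after which Theorem \ref{ergodicidadintrinseca1} and conjugacy invariance finish the job; this avoids the topological-genericity detour entirely. One small caveat on the step you flag yourself: your exclusion of $\mathcal{C}^{\prime}$ reads $\mathcal{C}$ as ``coded but not of finite type,'' which matches the paper's usage in Theorem \ref{lemacajas1}; even under the literal reading of $\mathcal{C}$ as all coded pairs, a coded subshift that is of finite type is transitive (its language is the increasing union of the languages of transitive subshifts), hence intrinsically ergodic by \cite{parry}, so your argument survives either convention.
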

\begin{proof}
Since $\mathcal{S}$ is open and dense in $D_1$ and $D_2$ is an open subset \cite{sidorov6} then $\mathcal{S} \cap D_2$ is open and dense in $D_2$. Then, by Lemma \ref{lascajascubren1}, Theorem \ref{ergodicidadintrinseca1} we can conclude our result.
\end{proof}

\subsubsection*{$\boldsymbol{D_I \neq D_2}$}

On Corollary \ref{notipofinito1} and Corollary \ref{ergodicidadintrinseca2} we made the assumption that $(0\alpha,1\beta) \neq (\omega\nu^{\infty}, \nu\omega^{\infty})$. We would like to explain the reason of this hypothesis. 

\vspace{1em}Let $\omega =  01$ and $\nu_k = 100(10)^k$ for some $k \geq 0$ and consider $(\alpha_k,\beta_k) \in \mathcal{LW}$ given by $$0\alpha = \omega\nu_k^{\infty} \hbox{\rm and }1\beta = \nu_k\omega^{\infty};$$ or $$0\alpha = \overline{\nu_k}{\overline{\omega}}^{\infty} \hbox{\rm and }1\beta = \overline{\omega}{\overline{\nu_k}}^{\infty}$$ where $\bar{\omega}$ denotes the mirror image of $\omega$, that is $\bar{\omega}_i = 1-w_i$ for each $i \in \{1, \ldots \ell(\omega)\}$. The case when $k = 0$ was introduced by Hofbauer in \cite{hofbauer1} and later on, it was extended for any $k \in \mathbb{N}$ by Glendinning and Hall in \cite{hall}. Observe that $(\omega, \nu_k)$ and $(\bar{\nu_k}, \bar{\omega})$ are associated pairs the essential pairs $$({\alpha_k}^{\prime},{\beta_k}^{\prime}) = \left(\sigma(\omega^{\infty}), \sigma(\nu_k^{\infty})\right)$$ and $$({\alpha_k}^{\prime \prime},{\beta_k}^{\prime \prime}) = \left(\sigma({\bar{\nu_k}}^{\infty}), \sigma({\bar{\omega}}^{\infty})\right).$$ As a consequence of \cite[Theorem 5.16]{yomero3} we have that $(\Sigma_{(\alpha_k,\beta_k)}, \sigma_{(\alpha_k,\beta_k)})$ is not a transitive subshift for any $k \geq 0$. To show that $(\Sigma_{(\alpha_k,\beta_k)}, \sigma_{(\alpha_k,\beta_k)})$ is not intrinsically ergodic for any $k \geq 0$, we need to use some results corresponding to Lorenz maps.

\vspace{1em}Recall that given a dynamical system $(X,f)$ we call a point $x \in X$ \textit{non wandering} if for every open set $U \subset X$ such that $x \in U$ there exists $n \in \mathbb{N}$ such that $f^n(U) \cap U \neq \emptyset$. We define \textit{the non-wandering set of $(X,f)$} to be $$\Omega(f) = \left\{x \in X : x \hbox{\rm{ is non wandering }} \right\}$$ \cite[p. 29]{brin}. In \cite[Corollary 10]{hall} is showed that if a Lorenz map $g$ has as kneading invariant $(\alpha,\beta) \in \mathcal{LW}$ satisfying that $0\alpha = \omega \nu_k^{\infty}$ and $1\beta = \nu_k\omega^{\infty}$ or $0\alpha = \overline{\nu_k}{\overline{\omega}}^{\infty}$ and $1\beta = \overline{\omega}{\overline{\nu_k}}^{\infty}$ then there are two basic components of the non-wandering set $A$ and $B$ such that $A\cap B = \emptyset$ and $h_{top}(g\mid_A) = h_{top}(g\mid_B) = h_{top}(g)$. 

\begin{theorem} 
Let $(\alpha,\beta) \in \mathcal{LW}$ satisfying that $$0\alpha = \omega \nu_k^{\infty} \hbox{\rm{ and }} 1\beta = \nu_k\omega^{\infty}$$ or $$0\alpha = \overline{\nu_k}{\overline{\omega}}^{\infty} \hbox{\rm and }1\beta = \overline{\omega}{\overline{\nu_k}}^{\infty}.$$ Then $(\Sigma_{(\alpha,\beta)}, \sigma_{(\alpha,\beta)})$ is not intrinsically ergodic.\label{canon1}
\end{theorem}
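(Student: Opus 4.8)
The plan is to push the question off the subshift $(\Sigma_{(\alpha,\beta)},\sigma_{(\alpha,\beta)})$ onto the associated $\mod 1$ transformation, where \cite[Corollary 10]{hall} applies directly, and then pull the resulting measures back. First I would use \cite[Proposition 4.1, Proposition 4.2]{yomero3} to produce a $\mod 1$ transformation $g_{\tilde{\beta},\tilde{\alpha}}$ whose kneading invariant is exactly the pair $(\alpha,\beta)$ at hand, so that $([0,1],g_{\tilde{\beta},\tilde{\alpha}})$ is a factor of $(\Sigma_{(\alpha,\beta)},\sigma_{(\alpha,\beta)})$ through $\pi_{\tilde{\beta},\tilde{\alpha}}$. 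The decisive reduction is Urbanski's \cite[Lemma 1]{urbanski2}: since $\pi_{\tilde{\beta},\tilde{\alpha}}$ is a measure theoretic isomorphism between $\mathbb{M}(\sigma_{(\alpha,\beta)})$ and $\mathbb{M}(g_{\tilde{\beta},\tilde{\alpha}})$, it preserves the entropy of invariant measures and hence restricts to a bijection between the two sets of measures of maximal entropy. Consequently $(\Sigma_{(\alpha,\beta)},\sigma_{(\alpha,\beta)})$ is intrinsically ergodic if and only if $([0,1],g_{\tilde{\beta},\tilde{\alpha}})$ is, and it suffices to exhibit two distinct measures of maximal entropy for $g_{\tilde{\beta},\tilde{\alpha}}$.

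Next I would feed the hypothesis straight into \cite[Corollary 10]{hall}: because $0\alpha=\omega\nu_k^{\infty}$ and $1\beta=\nu_k\omega^{\infty}$, the non-wandering set $\Omega(g_{\tilde{\beta},\tilde{\alpha}})$ splits into two disjoint basic components $A$ and $B$ with $h_{top}(g_{\tilde{\beta},\tilde{\alpha}}\mid_A)=h_{top}(g_{\tilde{\beta},\tilde{\alpha}}\mid_B)=h_{top}(g_{\tilde{\beta},\tilde{\alpha}})$; symbolically, under $\pi_{\tilde{\beta},\tilde{\alpha}}$, these correspond to the essential subshift $\Sigma_{(\sigma(\omega^{\infty}),\sigma(\nu_k^{\infty}))}$ and the renewal system $\{\omega,\nu_k\}^{\infty}$, exactly the two transitive components produced in Lemma \ref{prepara1}. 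Each basic component is topologically transitive, so (being conjugate after renormalisation to a transitive $\mod 1$ transformation, resp. a transitive sofic system) each restriction is intrinsically ergodic by Hofbauer \cite{hofbauer1,hofbauer2}, carrying a unique measure of maximal entropy $\mu_A$ and $\mu_B$. Since the entropy of an invariant measure depends only on its support, $h_{\mu_A}(g_{\tilde{\beta},\tilde{\alpha}})=h_{top}(g_{\tilde{\beta},\tilde{\alpha}}\mid_A)=h_{top}(g_{\tilde{\beta},\tilde{\alpha}})$ and likewise for $\mu_B$, so both $\mu_A$ and $\mu_B$ belong to $\mathbb{M}_{\max}(g_{\tilde{\beta},\tilde{\alpha}})$.

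The conclusion is then immediate. As $\mathrm{supp}(\mu_A)\subset A$, $\mathrm{supp}(\mu_B)\subset B$ and $A\cap B=\emptyset$, the measures $\mu_A$ and $\mu_B$ are mutually singular, hence distinct; thus $g_{\tilde{\beta},\tilde{\alpha}}$ admits at least two measures of maximal entropy. Pulling them back through the isomorphism of \cite[Lemma 1]{urbanski2} yields two distinct measures of maximal entropy for $(\Sigma_{(\alpha,\beta)},\sigma_{(\alpha,\beta)})$, which is therefore not intrinsically ergodic. The mirror case $0\alpha=\overline{\nu_k}\,{\overline{\omega}}^{\infty}$, $1\beta=\overline{\omega}\,{\overline{\nu_k}}^{\infty}$ is settled identically: the involution $x\mapsto\bar{x}$ conjugates the two families while preserving both entropy and the splitting of $\Omega$, so \cite[Corollary 10]{hall} again supplies two full-entropy disjoint components.

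I expect the only genuinely delicate point to be the entropy equality $h_{top}(g_{\tilde{\beta},\tilde{\alpha}}\mid_A)=h_{top}(g_{\tilde{\beta},\tilde{\alpha}}\mid_B)=h_{top}(g_{\tilde{\beta},\tilde{\alpha}})$, which is exactly what separates these boundary parameters $(0\alpha,1\beta)=(\omega\nu_k^{\infty},\nu_k\omega^{\infty})$ from the interior renormalisable pairs treated in Theorem \ref{ergodicidadintrinseca1}. There the renewal component satisfied the strict inequality $h_{top}(\sigma_B)<h_{top}(\sigma_{\{\omega,\nu\}^{\infty}})$ of Lemma \ref{prepara2}, forcing a single dominant component; here one must instead rely on the explicit computation of \cite[Corollary 10]{hall} that makes the two entropies coincide. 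Existence of the maximising measures on each transitive component, if one prefers not to quote Hofbauer's uniqueness, follows from upper semicontinuity of the entropy map for these expansive systems together with the variational principle \cite[Theorem 8.6]{walters}.
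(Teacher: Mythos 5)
Your overall strategy coincides with the paper's: realize $(\alpha,\beta)$ as the kneading invariant of an interval map of Lorenz type, invoke \cite[Corollary 10]{hall} to obtain two disjoint invariant sets of full topological entropy, and conclude that there are two mutually singular measures of maximal entropy. The genuine gap is in your first step. \cite[Proposition 4.1, Proposition 4.2]{yomero3} go in the opposite direction from the one you need: they start from a \emph{given} $\mod 1$ transformation $g_{\tilde{\beta},\tilde{\alpha}}$ and produce a centred hole $(a,b)$ such that $([0,1],g_{\tilde{\beta},\tilde{\alpha}})$ is a factor of $(\Lambda_{(a,b)},f_{(a,b)})$. They do not assert that a prescribed admissible pair --- in particular the eventually periodic, renormalisable pair with $0\alpha=\omega\nu_k^{\infty}$, $1\beta=\nu_k\omega^{\infty}$ --- is the kneading invariant of some \emph{linear} $\mod 1$ transformation. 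That realization is exactly what your reduction requires, because Urbanski's \cite[Lemma 1]{urbanski2} concerns the linear projection $\pi_{\tilde{\beta},\tilde{\alpha}}$ and is only available once a linear realization is in hand. Linear (constant-slope) realizability is a delicate point precisely in the non-transitive, renormalisable situation considered here, since the standard semi-conjugacy onto a constant-slope model can collapse parts of the dynamics and alter the kneading invariant; it does hold for these particular pairs (Hofbauer's original example, the case $k=0$, is a linear $\mod 1$ map), but it must be cited or proved, and the propositions you quote do not supply it.

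The paper circumvents this by quoting \cite[Theorem 1]{hubbardsparrow}, which realizes any admissible pair as the kneading invariant of an expansive (not necessarily linear) Lorenz map $g$ together with a semi-conjugacy $h$ from $\Sigma_{(\alpha,\beta)}$ to $[0,1]$. Since $g$ need not be linear, Urbanski's measure isomorphism is unavailable, and the paper instead transfers entropy back to the symbolic side via \cite[Theorem 3]{hall} and \cite[Lemma 3]{barnsley}, getting $h_{top}\bigl(\sigma\mid_{h^{-1}(A)}\bigr)=h_{top}\bigl(\sigma\mid_{h^{-1}(B)}\bigr)=h_{top}(\sigma_{(\alpha,\beta)})$ with $h^{-1}(A)\cap h^{-1}(B)=\emptyset$ (disjointness coming from \cite[Theorem 2]{glendinning1}), and then takes invariant measures of full entropy supported on each of these two disjoint compact invariant sets. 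Once your realization step is repaired in this way (or a linear realization is properly justified for this family), the rest of your argument --- two full-entropy components, existence of maximizing measures via expansiveness and the variational principle, mutual singularity from disjoint supports, and the mirror-image symmetry for the second family --- is correct and matches the paper's proof. A minor caveat: your identification of the two basic sets with the transitive components of Lemma \ref{prepara1} is not literally licensed, since that lemma assumes $(\Sigma_{(\alpha,\beta)},\sigma_{(\alpha,\beta)})$ is of finite type, whereas for the corner pair $(\omega\nu_k^{\infty},\nu_k\omega^{\infty})$ the subshift is only sofic; this remark is not load-bearing, however.
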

\begin{proof}
Let $(\alpha,\beta) \in \mathcal{LW}$ satisfying our hypothesis. From \cite[Theorem 1]{hubbardsparrow} there exists an expanding Lorenz map $g_{(\alpha,\beta)}$ such that $(\Sigma_{(\alpha,\beta)}, \sigma_{(\alpha,\beta)})$ is semi-conjugated to $([0,1], g_{(\boldsymbol{\alpha},\boldsymbol{\beta})})$ by a semi-conjugacy $h$. Let $\Omega_{(\boldsymbol{\alpha},\boldsymbol{\beta})}$ be the non-wandering set of $([0,1],g_{(\boldsymbol{\alpha},\boldsymbol{\beta})})$. Then by \cite[Corollary 10]{hall} there exist two $g_{(\boldsymbol{\alpha},\boldsymbol{\beta})}$-invariant sets $A,B \subset \Omega_{(\boldsymbol{\alpha},\boldsymbol{\beta})}$ such that $$h_{top}(g_{(\boldsymbol{\alpha},\boldsymbol{\beta})}\mid_A) = h_{top}(g_{(\boldsymbol{\alpha},\boldsymbol{\beta})}\mid_B) = h_{top}(g_{(\boldsymbol{\alpha},\boldsymbol{\beta})}).$$ By \cite[Theorem 2]{glendinning1}, $A \cap B = \emptyset$, whence $h^{-1}(A) \cap h^{-1}(B) = \emptyset$. Moreover, by \cite[Theorem 3]{hall}, $h_{top}(g_{(\boldsymbol{\alpha},\boldsymbol{\beta})}) = \log (\frac{1}{\kappa})$. Then by \cite[Lemma 3]{barnsley},  $h_{top}(\sigma_{(\boldsymbol{\alpha},\boldsymbol{\beta})}(h^{-1}(A))) = h_{top}(\sigma_{(\boldsymbol{\alpha},\boldsymbol{\beta})}(h^{-1}(B))) = h_{top}(\sigma_{(\boldsymbol{\alpha},\boldsymbol{\beta})})$. This implies that there exist two $\sigma_{(\boldsymbol{\alpha},\boldsymbol{\beta})}$-invariant measures $\mu_A$ and $\mu_B$ such that $\hbox{\rm{supp}}(\mu_A) \subset A$, $\hbox{\rm{supp}}(\mu_{B}) \subset B$ and $$h_{\mu_{A}} = h_{\mu_{B}} = h_{top}(\sigma_{(\boldsymbol{\alpha},\boldsymbol{\beta})}).$$ Thus, $(\Sigma_{(\boldsymbol{\alpha},\boldsymbol{\beta})}, \sigma_{(\boldsymbol{\alpha},\boldsymbol{\beta})})$ is not intrinsically ergodic. 
\end{proof}

Observe that from Theorem \ref{canon1} and \cite[Corollary 10]{hall} we obtain the following result.

\begin{corollary}
Let $(\alpha, \beta) = (\sigma(\omega\nu^{\infty}), \sigma(\nu\omega^{\infty}))$ where $\omega \neq 01$ and $\nu \neq 100(10)^k$ or $\omega \neq \overline{100(10)^k}$ or $\nu \neq 10$. Then $(\Sigma_{(\alpha, \beta)}, \sigma_{(\alpha, \beta)})$ is intrinsically ergodic.\label{ergodicidadintrinseca3}
\end{corollary}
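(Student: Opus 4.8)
The plan is to treat $(0\alpha,1\beta)=(\omega\nu^{\infty},\nu\omega^{\infty})$ as the distinguished corner of the renormalisation box $\mathcal{B}^0(\omega,\nu)$ and to show that, once the Hofbauer--Glendinning--Hall families of Theorem \ref{canon1} are excluded, the inner essential component strictly dominates the renewal component in entropy. First I would identify the transitive components exactly as in Lemma \ref{prepara1}: the system carries the essential subshift $A=\Sigma_{(\sigma(\omega^{\infty}),\sigma(\nu^{\infty}))}$ together with the renewal component $B$ generated by the blocks $\omega 1$ and $\nu 0$. The one structural difference from the interior of the box is that at the corner the periodic point $\omega\nu^{\infty}$ is no longer excluded (now $\sigma(\omega\nu^{\infty})=\alpha\preccurlyeq\alpha$), so the strict estimate of Lemma \ref{prepara2} degenerates and instead one has $h_{top}(\sigma_B)=h_{top}(\sigma_{\{\omega,\nu\}^{\infty}})$.

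The decisive step is the entropy comparison between $A$ and $B$. By \cite[Lemma 8, Lemma 9]{hall} one always has $h_{top}(\sigma_{(\sigma(\omega^{\infty}),\sigma(\nu^{\infty}))})\geq h_{top}(\sigma_{\{\omega,\nu\}^{\infty}})$, and by \cite[Corollary 10]{hall} --- the very input used in the proof of Theorem \ref{canon1} --- the two basic components of the non-wandering set of the associated Lorenz map share the full topological entropy precisely when $(\omega,\nu)$ is one of the pairs $(01,100(10)^{k})$ or $(\overline{100(10)^{k}},10)$. Since the hypothesis excludes exactly these pairs, the inequality is strict, $h_{top}(\sigma_A)>h_{top}(\sigma_B)$, so $A$ is the unique transitive component of maximal entropy, and the conclusion of Theorem \ref{lemota} applies here as well.

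With uniqueness of the maximal transitive component in hand, I would close the argument verbatim as in Theorem \ref{ergodicidadintrinseca1}: the essential subshift $A$ is intrinsically ergodic, so its unique measure of maximal entropy $\mu$ extends to a measure $\mu_{(\alpha,\beta)}$ on $\Sigma_{(\alpha,\beta)}$ with $h_{\mu_{(\alpha,\beta)}}=h_{top}(\sigma_{(\alpha,\beta)})$. Any competing maximal measure $\eta$ would be ergodic and mutually singular with $\mu_{(\alpha,\beta)}$, hence supported in $\Sigma_{(\alpha,\beta)}\setminus A$; its entropy would then be at most $h_{top}(\sigma_B)<h_{top}(\sigma_{(\alpha,\beta)})$, a contradiction. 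Thus the measure of maximal entropy is unique.

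The main obstacle is not the ergodic-theoretic bookkeeping --- that is identical to Theorem \ref{ergodicidadintrinseca1} --- but matching the exclusion in the hypothesis to the equality case of \cite[Corollary 10]{hall}, and confirming that at the corner the renewal component $B$ attains the full entropy $h_{top}(\sigma_{\{\omega,\nu\}^{\infty}})$ rather than the strictly smaller value of Lemma \ref{prepara2}. This is precisely what forces the comparison to go through \cite[Corollary 10]{hall} instead of the interior estimate, and it is the reason the corner was set aside in Corollary \ref{notipofinito1} and Corollary \ref{ergodicidadintrinseca2}.
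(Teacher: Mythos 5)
Your proposal is correct and takes essentially the same route as the paper: the paper obtains this corollary directly from Theorem \ref{canon1} and \cite[Corollary 10]{hall}, which is exactly your key step that the two components share the full entropy only for the excluded pairs $(01,100(10)^{k})$ and their mirror images, so that otherwise $A$ strictly dominates $B$ and the extension argument of Theorem \ref{ergodicidadintrinseca1} yields uniqueness of the measure of maximal entropy. Your write-up actually supplies more detail than the paper (which states the corollary without proof), in particular the observation that at the corner $(0\alpha,1\beta)=(\omega\nu^{\infty},\nu\omega^{\infty})$ the component $B$ attains the full renewal entropy, so the strict bound of Lemma \ref{prepara2} must be replaced by the comparison via \cite[Corollary 10]{hall}.
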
 

\subsection*{Intrinsic Ergodicity in $D_1 \setminus D_2$} 

To finish the proof of Theorem \ref{elmerochingon} it is necessary to prove that subshifts of finite type corresponding to pairs $(\alpha, \beta) \in \pi^{-1}(D_1 \setminus D_2) \cap \mathcal{LW}$ are intrinsically ergodic. As a consequence of \cite[Theorem 2.13]{sidorov1} and \cite[Theorem 3.8. Corollary 3.13]{sidorov6} we have that $(0\alpha, 1\beta) \in \left[\xi_r^{\infty}, \xi\zeta_r^{\infty} \right]_{\prec} \times \left[\zeta\xi_r^{\infty}, \zeta_r^{\infty} \right]_{\prec}$ for every $(\alpha, \beta) \in \pi^{-1}(D_1 \setminus D_2) \cap \mathcal{LW}$ with $r \in \mathbb{Q}\cap (0,1)$. Moreover, using a similar argument as in Theorem \ref{lemacajas1}, if $(\alpha, \beta) \in \pi^{-1}(D_1 \setminus D_2) \cap \mathcal{LW}$ then $(\alpha, \beta)$ is renormalisable by $(\xi_r, \zeta_r)$.

\begin{definition}
 Let $(\alpha,\beta) \in \mathcal{LW}$ be an essential pair with associated pair $(\omega,\nu)$ and consider the renormalisation box $\mathcal{B}^0(\omega,\nu)$ associated to $(\omega, \nu)$. Let $r \in \mathbb{Q} \cap (0,1)$. We define a \textit{(1,r)-renormalisation box} or simply a \textit{$1$-renormalisation box} to be $$\mathcal{B}^1_r(\omega, \nu) = \left\{(\alpha, \beta) \in \mathcal{LW} : (0\alpha, 1\beta) \in \rho_r(\mathcal{B}^0(\omega, \nu))\right\}.$$ Given a fi\-ni\-te se\-quen\-ce $(r_1, \ldots r_n) \in \left(\mathbb{Q}\cap(0,1)\right)^{n}$ we define an \textit{$(n-(r_1\ldots r_n))$-re\-nor\-ma\-li\-sa\-tion box} or simply an \textit{$n$- re\-nor\-ma\-li\-sa\-tion box} to be $$\mathcal{B}^n_{(r_1,\ldots r_n)}(\omega, \nu) = \left\{(\alpha, \beta) \in \mathcal{LW} : (0\alpha, 1\beta) \in \rho_{r_n} \circ \ldots \circ \rho_{r_1}(\mathcal{B}^0_{\omega, \nu})\right\}.$$ From Theorem \ref{cajas1} it is clear that for every $n \in \mathbb{N}$, $$B^n_{r_1 \ldots r_n}(\omega, \nu) \cap B^n_{r_1\ldots r_n}(\omega^{\prime}, \nu^{\prime}) = \emptyset$$ if $(\omega, \nu) \neq (\omega^{\prime}, \nu^{\prime})$. Similarly, it is clear that $$B^n_{r_1 \ldots r_n}(\omega, \nu) \cap B^n_{r_1^{\prime} \ldots r_n^{\prime}}(\omega, \nu) = \emptyset$$ if $(r_1, \ldots, r_n) \neq (r_1^{\prime}, \ldots r_n^{\prime})$ and that $$B^n_{r_1 \ldots r_n}(\omega,\nu) \cap B^m_{r_1 \ldots r_m}(\omega,\nu) = \emptyset$$ if $n \neq m$. \label{nrenormalisationbox}
\end{definition}

\vspace{1em}Note that $$\hbox{\rm{diam}}\left(\mathcal{B}_{r_1, \ldots r_n}^n(\omega, \nu)\right) = \left(\frac{1}{{2^{(q_1\cdot\ldots\cdot q_n \cdot\ell(\omega)+1)}}^2} + \frac{1}{{2^{(q_1\cdot\ldots\cdot q_n\cdot\ell(\nu)+1)}}^{2}} \right)^{\frac{1}{2}}.$$

\vspace{1em}We characterise now such transitive components for $(\alpha, \beta) \in \mathcal{LW}$ such that $(0\alpha, 1\beta) \in \mathcal{B}^n_{r_1, \ldots, r_n}(\omega, \nu)$. Note that \cite[Proposition 2.6]{sidorov1} implies that $\left\{\xi_{r_n}^{\infty}, \zeta_{r_n}^{\infty} \right\} \subsetneq \Sigma_{(\alpha, \beta)}$ for every $(\alpha, \beta)$ such that $(0\alpha, 1\beta) \in \mathcal{B}^n_{r_1, \ldots, r_n}(\omega, \nu)$.

\begin{theorem}
Let $(\alpha, \beta)$ be an essential pair with associated pair $(\omega, \nu)$. Let $(r_1, \ldots r_n) \in \left(\mathbb{Q}\cap(0,1)\right)^{n}$. Then, the subshift $\left(\Sigma_{(\alpha^{\prime}, \beta^{\prime})},\sigma_{(\alpha^{\prime}, \beta^{\prime})}\right)$ where $\alpha^{\prime}= \sigma(\rho_{r_n}\circ \ldots \circ \rho_{r_1}(\omega))$ and $\beta^{\prime} = \sigma(\rho_{r_n}\circ \ldots \circ \rho_{r_1}(\nu))$ has two transitive components $A^{\prime}$ and $B^{\prime}$ where $$A^{\prime} = \mathop{\bigcap}\limits_{m = - \infty}^{\infty}\sigma^m(\rho_{r_n}\circ \ldots \circ \rho_{r_1}(\Sigma_{(\alpha,\beta)}))$$ and $B^{\prime} = \left\{\xi_{r_n}^{\infty}, \zeta_{r_n}^{\infty} \right\}$\label{prepara1renorm}
\end{theorem}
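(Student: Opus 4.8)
The plan is to mirror the two–component analysis of Lemma \ref{prepara1}, reading $(\alpha',\beta')$ as the essential corner of the $n$-renormalisation box and using the outermost substitution $\rho_{r_n}$ to create the coarse block structure. Write $\Phi=\rho_{r_n}\circ\cdots\circ\rho_{r_1}$ and $\Phi_{n-1}=\rho_{r_{n-1}}\circ\cdots\circ\rho_{r_1}$, and set $0\alpha'_{n-1}=\Phi_{n-1}(\omega)^{\infty}$, $1\beta'_{n-1}=\Phi_{n-1}(\nu)^{\infty}$. Since each $\rho_{r_i}$ replaces $0\mapsto\xi_{r_i}$ and $1\mapsto\zeta_{r_i}$ with $\ell(\xi_{r_i})=\ell(\zeta_{r_i})=q_i$, the map $\Phi$ is a substitution of constant length $Q=q_1\cdots q_n$ that intertwines the shift, $\sigma^{Q}\circ\Phi=\Phi\circ\sigma$, and is order preserving (because $\xi_{r}=0\text{-}\max$, $\zeta_{r}=1\text{-}\min$ and $\xi_{r}\prec\zeta_{r}$). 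First I would record that $0\alpha'=\Phi(\omega)^{\infty}=\rho_{r_n}(0\alpha'_{n-1})$ and $1\beta'=\Phi(\nu)^{\infty}=\rho_{r_n}(1\beta'_{n-1})$, so both $\alpha'$ and $\beta'$ are periodic and $(\Sigma_{(\alpha',\beta')},\sigma_{(\alpha',\beta')})$ is a subshift of finite type whose admissible sequences are, at the coarsest scale, concatenations of the blocks $\xi_{r_n}$ and $\zeta_{r_n}$.

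Next I would produce the two components explicitly. For $B'$: using that $\xi_{r_n}$ and $\zeta_{r_n}$ are cyclically balanced with $1$-ratio $r_n$, the lexicographic interval $\Sigma_{(\sigma(\xi_{r_n}^{\infty}),\sigma(\zeta_{r_n}^{\infty}))}$ collapses to the single period-$q_n$ orbit of $\xi_{r_n}^{\infty}$ (balanced rigidity, cf. \cite[Proposition 2.6]{sidorov1}), which is trivially transitive, and admissibility $\xi_{r_n}^{\infty},\zeta_{r_n}^{\infty}\in\Sigma_{(\alpha',\beta')}$ follows from the corner form of $(\alpha',\beta')$. For $A'$: I would show $\Phi(\Sigma_{(\alpha,\beta)})\subset\Sigma_{(\alpha',\beta')}$ by transporting the admissibility window $\beta\preccurlyeq\sigma^{k}(x)\preccurlyeq\alpha$ through $\Phi$ to $\beta'\preccurlyeq\sigma^{Qk}(\Phi(x))\preccurlyeq\alpha'$ on block-aligned shifts, the intermediate non-aligned shifts being controlled by the cyclic balancedness of the blocks. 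Then $A'=\bigcap_{m}\sigma^{m}(\Phi(\Sigma_{(\alpha,\beta)}))$ is closed and completely invariant, and $\sigma\mid_{A'}$ is transitive because, through $\sigma^{Q}\circ\Phi=\Phi\circ\sigma$, it inherits transitivity from the essential (hence transitive) subshift $\Sigma_{(\alpha,\beta)}$. Disjointness $A'\cap B'=\emptyset$ is immediate, since $\xi_{r_n}^{\infty}=\rho_{r_n}(0^{\infty})$ while $0^{\infty}\notin\Sigma_{(\alpha,\beta)}$, so $B'$ is not of the block-decodable form defining $A'$.

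The remaining and most delicate step is completeness: every recurrent point of $\Sigma_{(\alpha',\beta')}$ lies in $A'\cup B'$, so that no third transitive component exists. Here I would argue as in the second half of the proof of Lemma \ref{prepara1}: a recurrent $x$ either eventually coincides with the periodic orbit $B'$, or it contains a block $\xi_{r_n}1$ or $\zeta_{r_n}0$, which, because $\{\xi_{r_n},\zeta_{r_n}\}^{\infty}$ is uniquely decipherable, forces $x$ to be a clean concatenation of $\xi_{r_n}$ and $\zeta_{r_n}$; its underlying binary code, the unique $y$ with $x=\Phi(y)$ up to a shift, is then forced by the transported inequalities to satisfy $\beta'_{n-1}\preccurlyeq\sigma^{k}(y)\preccurlyeq\alpha'_{n-1}$, and iterating the decipherment through the $n$ levels of $\Phi$ places $y$ in $\Sigma_{(\alpha,\beta)}$ and hence $x$ in $A'$. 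Maximality of $A'$ and $B'$ (no transitive set properly containing either) follows from the non-existence of a bridge joining $\xi_{r_n}1$ (resp. $\zeta_{r_n}0$) to the periodic orbit, which is exactly the bridge obstruction supplied by \cite[Theorem 5.13]{yomero3}.

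I expect the hard part to be the bookkeeping in these transported admissibility inequalities: verifying that order preservation of $\Phi$ together with the cyclic balancedness of each pair $\xi_{r_i},\zeta_{r_i}$ keeps every non-block-aligned shift inside the window $[\beta',\alpha']_{\prec}$ except precisely along the orbit $B'$. This single estimate is what simultaneously yields the inclusion $\Phi(\Sigma_{(\alpha,\beta)})\subset\Sigma_{(\alpha',\beta')}$ and the converse decipherment, and hence pins down both components at once; a minor side point to dispatch first is that $(\sigma(\xi_{r_n}^{\infty}),\sigma(\zeta_{r_n}^{\infty}))$ is essential, so that the block description of Lemma \ref{prepara1} is legitimately available in this boundary case.
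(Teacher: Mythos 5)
Your plan follows the paper's proof quite closely: transitivity of $A'$ comes from carrying words (equivalently, the intertwining $\sigma^{q_1\cdots q_n}\circ\rho_{r_n}\circ\cdots\circ\rho_{r_1}=\rho_{r_n}\circ\cdots\circ\rho_{r_1}\circ\sigma$) back and forth between $\Sigma_{(\alpha,\beta)}$ and $A'$ and bridging inside the essential subshift, $B'$ is recognised as the balanced periodic orbit, and exclusivity is to follow from a block-structure dichotomy. Up to that point this is exactly the paper's scheme, phrased with a conjugacy instead of the paper's explicit decode--bridge--encode word argument.

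The genuine gap is in your completeness step, and it is concrete. For $n\geq 2$ the claim ``a recurrent $x\notin B'$ contains $\xi_{r_n}1$ or $\zeta_{r_n}0$, hence is a clean concatenation, and iterating the decipherment through the $n$ levels places its code in $\Sigma_{(\alpha,\beta)}$, so $x\in A'$'' is false. Decoding one level only forces the code $y$ into $\Sigma_{(\alpha'_{n-1},\beta'_{n-1})}$, and that subshift contains its own periodic orbit $\xi_{r_{n-1}}^{\infty}$, whose further code is $0^{\infty}\notin\Sigma_{(\alpha,\beta)}$, so the induction cannot be continued. Concretely, $x_0=\rho_{r_n}(\xi_{r_{n-1}}^{\infty})$ is periodic (hence recurrent), lies in $\Sigma_{(\alpha',\beta')}$ by the very transport of admissibility you use to prove $\rho_{r_n}\circ\cdots\circ\rho_{r_1}(\Sigma_{(\alpha,\beta)})\subset\Sigma_{(\alpha',\beta')}$ (since $\xi_{r_{n-1}}^{\infty}\in\Sigma_{(\alpha'_{n-1},\beta'_{n-1})}$), contains $\xi_{r_n}1$, and yet lies in neither $A'$ (unique decipherability plus $0^{\infty}\notin\Sigma_{(\alpha,\beta)}$) nor $B'$. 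For instance, with $r_1=r_2=\frac{1}{2}$ and the essential pair $0\alpha=(01)^{\infty}$, $1\beta=(100)^{\infty}$, one gets $\alpha'=(11010010)^{\infty}$, $\beta'=(001011001101)^{\infty}$, and all four shifts of $x_0=(0110)^{\infty}$ lie strictly between $\beta'$ and $\alpha'$, while $x_0$ decodes to $(01)^{\infty}$ and then to $0^{\infty}$. So ``recurrent $\Rightarrow A'\cup B'$'' fails, and with it your route to the non-existence of a third component. What can rescue the statement is precisely the clause of the definition you never invoke: a transitive component must be completely invariant, and the orbit of $x_0$ is not (for example $1(0110)^{\infty}$ is admissible and lies outside it), while its saturation under preimages accumulates on $B'$, acquires isolated points and is no longer transitive; an argument of this type is what is actually needed for $n\geq 2$. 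In fairness, the paper's own proof has the same soft spot --- its assertion that $x\notin A'$ forces an excessive power $\xi_{r_n}^{m_1^{\xi_{r_n}}+k}$ or $\zeta_{r_n}^{n_1^{\zeta_{r_n}}+k}$ to occur in $x$ also fails at $x_0$, which contains no such powers --- so you have reproduced the paper's strategy down to its weakest step, but as written the step would fail.
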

\begin{proof}
Firstly we will show that $A^{\prime}$ is a transitive component. Recall that $r_i = \frac{p_i}{q_i}$ with $1 \leq i \leq n$. Let $\upsilon, \kappa \in \mathcal{L}(A^{\prime})$. Note that, there exist $l \leq \ell(\upsilon)$, $m \leq q_n$ such that and a word $u \in B_m(A^{\prime})$ such that $ \sigma^l(\upsilon)u \in \mathcal{L}(A^{\prime})$, $$\sigma^l(\upsilon)u = \xi_{r_n}^{n_1}\zeta_{r_n}^{n_2}\ldots\xi_{r_n}^{n_{i-1}}\zeta_{r_n}^{n_i}$$ where $\left\{n_j\right\}_{j=1}^i \subset \mathbb{N}$ such that $$\upsilon_{(\alpha,\beta)}=(\rho_{r_n}\circ \ldots \circ \rho_{r_1})^{-1}(\sigma^n(\upsilon)u) \in \mathcal{L}(\Sigma_{\alpha,\beta}).$$ Similarly, there are words $u^{\prime}, v^{\prime} \in \mathcal{L}(A^{\prime})$ such that $u^{\prime}\kappa v^{\prime} \in \mathcal{L}(A^{\prime})$ and $$u^{\prime}\kappa v^{\prime} = \xi_{r_n}^{n^{\prime}_1}\zeta_{r_n}^{n^{\prime}_2}\ldots\xi_{r_n}^{n^{\prime}_{k-1}}\zeta_{r_n}^{n^{\prime}_k}$$ where $\left\{n^{\prime}_j\right\}_{j=1}^k \subset \mathbb{N}$ such that $$\kappa_{(\alpha,\beta)}= (\rho_{r_n}\circ \ldots \circ \rho_{r_1})^{-1}(u^{\prime}\kappa v^{\prime}) \in \mathcal{L}(\Sigma_{\alpha,\beta}).$$  Since $(\alpha,\beta)$ is an essential pair, there exists a word $\eta$ such that $$\upsilon_{(\alpha, \beta)}\eta\kappa_{(\alpha, \beta)} \in \mathcal{L}(\Sigma_{(\alpha,\beta)}).$$ Then $$\rho_{r_n}\circ \ldots \circ \rho_{r_1}(\upsilon_{(\alpha, \beta)}\eta\kappa_{(\alpha, \beta)}) \in \mathcal{L}(A^{\prime}).$$ Since $$\rho_{r_n}\circ \ldots \circ \rho_{r_1}(\upsilon_{(\alpha, \beta)}\eta\kappa_{(\alpha, \beta)}) = \rho_{r_n}\circ \ldots \circ \rho_{r_1}(\upsilon_{(\alpha, \beta)})\rho_{r_n}\circ \ldots \circ \rho_{r_1}(\eta)\rho_{r_n}\circ \ldots \circ \rho_{r_1}(\kappa_{(\alpha, \beta)})$$ we have that $u\rho_{r_n}\circ \ldots \circ \rho_{r_1}(\eta)u^{\prime}$ is a bridge between $\upsilon$ and $\kappa$ with $$u(\rho_{r_n}\circ \ldots \circ \rho_{r_1}(\eta))u^{\prime} \in \mathcal{L}(A^{\prime}).$$ This shows that $A^{\prime}$ is a transitive component. 

To show that $B^{\prime}$ is a transitive component note that recall that $B^{\prime} \subset \Sigma_{(\alpha^{\prime}, \beta^{\prime})}$ for every $(\alpha^{\prime}, \beta^{\prime}) \in \mathcal{LW}$ such that $$(0\alpha^{\prime}, 1\beta^{\prime}) \in \mathcal{B}^n_{r_1 \ldots r_n}(\omega, \nu) .$$ Observe that $$0\alpha^{\prime} = \rho_{r_n}\circ \ldots \circ \rho_{r_1}(\omega) = \left(\xi_{r_n}\zeta_{r_n}^{n_1^{\zeta_{r_n}}} \ldots \xi_{r_n}^{n_i^{\xi_{r_n}}}\zeta_{r_n}^{n_i^{\zeta_{r_n}}}\right)^{\infty}$$ and $$1\beta^{\prime} = \rho_{r_n}\circ \ldots \circ \rho_{r_1}(\nu) =  \left(\zeta_{r_n}\xi_{r_n}^{m_1^{\xi_{r_n}}} \ldots \zeta_{r_n}^{m_j^{\zeta_{r_n}}}\xi_{r_n}^{m_j^{\xi_{r_n}}}\right)^{\infty}.$$ From  \cite[Lemma 5.6]{yomero3} we have that $$\max \left\{n_i^{\xi_{r_n}}\right\} \leq \max \left\{m_j^{\xi_{r_n}}\right\} = m_1^{\xi_{r_n}}$$ and $$\max \left\{m_j^{\zeta_{r_n}}\right\} \leq \max \left\{n_i^{\zeta_{r_n}} \right\} = n_1^{\zeta_{r_n}}.$$ Since $B^{\prime} \subset \Sigma_{(\alpha^{\prime}, \beta^{\prime})}$ and $B^{\prime}$ is $\sigma$-invariant we have that $\xi^{m_1^{\xi_{r_n}}+1}$ and $\zeta^{n_1^{\zeta_{r_n}}+1} \in \mathcal{L}(\Sigma_{(\alpha^{\prime}, \beta^{\prime})})$ and that $\xi^{m_1^{\xi_{r_n}}+1}$ and $\zeta^{n_1^{\zeta_{r_n}}+1}$ are factors of $x \in \Sigma_{(\alpha^{\prime}, \beta^{\prime})}$ if and only if $x \in B$. Thus $B^{\prime}$ is a transitive component. 

Assume that there is a third transitive component $C^{\prime}$ such that $C^{\prime} \neq A^{\prime}$ and $C^{\prime} \neq B^{\prime}$. Since $C^{\prime} \neq A^{\prime}$ we have that $x \notin (\rho_{r_n}\circ \ldots \circ \rho_{r_1})^{-1}(\Sigma_{(\alpha, \beta)})$. This implies that $\xi^{m_1^{\xi_{r_n}}+k}$ or $\zeta^{n_1^{\zeta_{r_n}}+k}$ are factors of $x$. On the other hand, since $x \notin B^{\prime}$ then neither $\xi^{m_1^{\xi_{r_n}}+k}$ nor $\zeta^{n_1^{\zeta_{r_n}}+k}$ are factors of $x$ for every $k \geq 1$. Thus $C^{\prime}  = \emptyset$. 
\end{proof}

\begin{corollary}
Let $(\alpha, \beta)$ be an essential pair with associated pair $(\omega, \nu)$. Let $(r_1, \ldots r_n) \in \left(\mathbb{Q}\cap(0,1)\right)^{n}$. Then, for every $(\alpha^{\prime}, \beta^{\prime})$ such that $(0\alpha^{\prime}, 1\beta^{\prime}) \in \mathcal{B}^n_{(r_1, \ldots, r_n)}(\omega, \nu)$such that the subshift $\left(\Sigma_{(\alpha^{\prime}, \beta^{\prime})},\sigma_{(\alpha^{\prime}, \beta^{\prime})}\right)$ is a subshift of finite type, the $\Sigma_{(\alpha^{\prime}, \beta^{\prime})}$ has three transitive components $A^{\prime}$, $B^{\prime}$ and $C^{\prime}$ where $$A^{\prime} = \mathop{\bigcap}\limits_{m=-\infty}^{\infty}\sigma^{m}\left(\rho_{r_n}\circ \ldots \circ \rho_{r_1}(A)\right),$$ $$B^{\prime} = \mathop{\bigcap}\limits_{m=-\infty}^{\infty}\sigma^{m}\left(\rho_{r_n}\circ \ldots \circ \rho_{r_1}(B)\right)$$ and $C^{\prime} = \left\{\xi_{r_n}^{\infty}, \zeta_{r_n}^{\infty} \right\}$ where $A$ and $B$ are the components constructed in Lemma \ref{prepara1}.\label{prepara2renorm}
\end{corollary}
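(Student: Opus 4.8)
The plan is to reduce the statement to the two building blocks already available, namely Lemma \ref{prepara1} and Theorem \ref{prepara1renorm}, by pulling the pair $(\alpha^{\prime}, \beta^{\prime})$ back through the substitution. Write $\rho = \rho_{r_n}\circ \ldots \circ \rho_{r_1}$. Since $(0\alpha^{\prime}, 1\beta^{\prime}) \in \mathcal{B}^n_{(r_1, \ldots, r_n)}(\omega, \nu) = \rho(\mathcal{B}^0(\omega, \nu))$ and each $\rho_{r_i}$ is a uniquely decipherable, constant-length substitution (its images $\xi_{r_i}, \zeta_{r_i}$ are distinct cyclically balanced words of the same length $q_i$ and are distinguished by their first symbol), there is a unique pair $(\hat{\alpha}, \hat{\beta})$ with $(0\hat{\alpha}, 1\hat{\beta}) \in \mathcal{B}^0(\omega, \nu)$ and $\rho(0\hat{\alpha}, 1\hat{\beta}) = (0\alpha^{\prime}, 1\beta^{\prime})$. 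Because $\rho$ is a constant-length code of block length $q_1\cdots q_n$, it carries periodic sequences to periodic sequences and reflects periodicity, so the hypothesis that $(\Sigma_{(\alpha^{\prime}, \beta^{\prime})}, \sigma_{(\alpha^{\prime}, \beta^{\prime})})$ is a subshift of finite type forces $\hat{\alpha}$ and $\hat{\beta}$ to be periodic; hence $(\Sigma_{(\hat{\alpha}, \hat{\beta})}, \sigma_{(\hat{\alpha}, \hat{\beta})})$ is a subshift of finite type and, by Theorem \ref{lemacajas1}, it is renormalisable by $(\omega, \nu)$.

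First I would invoke Lemma \ref{prepara1} for $(\hat{\alpha}, \hat{\beta})$: it produces exactly two transitive components, $A = \Sigma_{(\sigma(\omega^{\infty}), \sigma(\nu^{\infty}))}$ and $B \subsetneq \{\omega, \nu\}^{\infty}$, where the first is precisely the base essential subshift $\Sigma_{(\alpha, \beta)}$. Applying $\rho$ and passing to maximal invariant sets then transports this decomposition upward. For the component $B$, I claim $B^{\prime} = \bigcap_{m=-\infty}^{\infty}\sigma^m(\rho(B))$ is a transitive component: given two words in $\mathcal{L}(B^{\prime})$ one strips them to whole top-level blocks $\xi_{r_n}, \zeta_{r_n}$, deciphers them through $\rho^{-1}$ into $\mathcal{L}(B)$, uses transitivity of $B$ to build a bridge there, and pushes that bridge back through $\rho$, exactly as in the first paragraph of the proof of Theorem \ref{prepara1renorm}. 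For the component $A = \Sigma_{(\alpha, \beta)}$, the set $\bigcap_{m}\sigma^m(\rho(\Sigma_{(\alpha, \beta)}))$ is nothing but the renormalised essential subshift analysed in Theorem \ref{prepara1renorm}, which splits into the two transitive components $A^{\prime} = \bigcap_m \sigma^m(\rho(A))$ and $C^{\prime} = \{\xi_{r_n}^{\infty}, \zeta_{r_n}^{\infty}\}$; note that $C^{\prime}$ is a single periodic orbit, since $\xi_{r_n}$ and $\zeta_{r_n}$ are cyclic permutations of each other, and that $C^{\prime} \subsetneq \Sigma_{(\alpha^{\prime}, \beta^{\prime})}$ by \cite[Proposition 2.6]{sidorov1}. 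Thus the three advertised sets $A^{\prime}, B^{\prime}, C^{\prime}$ are transitive components of $\Sigma_{(\alpha^{\prime}, \beta^{\prime})}$.

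It remains to rule out a fourth transitive component. Here I would reproduce the closing argument of Lemma \ref{prepara1} and Theorem \ref{prepara1renorm}: a transitive component $D$ disjoint from $A^{\prime}\cup B^{\prime}\cup C^{\prime}$ would consist of sequences $x$ in which the overshoot blocks $\xi_{r_n}^{\,m_1^{\xi_{r_n}}+1}$ and $\zeta_{r_n}^{\,n_1^{\zeta_{r_n}}+1}$ both can and cannot occur, which is impossible, since their occurrence pins $x$ into $B^{\prime}$ and their absence pins $x$ into $A^{\prime}\cup C^{\prime}$. Equivalently, a nonempty invariant transitive $D$ would descend through $\rho^{-1}$ to a transitive component of $\Sigma_{(\hat{\alpha}, \hat{\beta})}$ distinct from $A$ and $B$, contradicting the count of exactly two given by Lemma \ref{prepara1}. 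Therefore $D = \emptyset$ and the components are exactly $A^{\prime}, B^{\prime}, C^{\prime}$.

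The step I expect to be the main obstacle is the transfer of transitivity and of maximality across $\rho$: one must verify that $\rho$ sets up a genuine correspondence between the transitive components downstairs and upstairs, with the single subtlety that the essential component $A$ acquires the extra boundary orbit $C^{\prime}$ created by the outermost substitution $\rho_{r_n}$, whereas $B$ is carried over intact as $B^{\prime}$. Since the behaviour of the essential factor is already packaged in Theorem \ref{prepara1renorm}, the real work is the bridge-pullback bookkeeping for $B^{\prime}$ and the block-condition check that guarantees no spurious fourth piece survives.
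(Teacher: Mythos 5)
Your proposal is correct and takes essentially the same route as the paper: pull the pair back to a finite-type pair in $\mathcal{B}^0(\omega,\nu)$, obtain $A$ and $B$ from Lemma \ref{prepara1}, transport them through $\rho_{r_n}\circ\cdots\circ\rho_{r_1}$ together with the substitution-created orbit $C' = \left\{\xi_{r_n}^{\infty},\zeta_{r_n}^{\infty}\right\}$ exactly as in Theorem \ref{prepara1renorm}, and rule out a fourth component by the overshoot-block dichotomy (occurrence of $\xi_{r_n}^{m_1^{\xi_{r_n}}+1}$ or $\zeta_{r_n}^{n_1^{\zeta_{r_n}}+1}$ forces membership in $B'$, absence forces membership in $A'\cup C'$), which is precisely the paper's closing argument. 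One caution: your ``equivalently'' phrased descent through $\rho^{-1}$ is not in fact equivalent, since a hypothetical extra component need not lie in the image of the substitution (witness $C'$ itself, which has no preimage in $\Sigma_{(\hat{\alpha},\hat{\beta})}$), so it is the dichotomy and not the descent that closes the proof; on the other hand, your explicit verification that the pulled-back pair is again of finite type makes precise a step the paper leaves implicit.
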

\begin{proof}
Observe that since $(0\alpha^{\prime}, 1\beta^{\prime}) \in \mathcal{B}^n_{(r_1, \ldots, r_n)}(\omega, \nu)$ we have that $\left(\Sigma_{(\alpha,\beta)}, \sigma_{(\alpha,\beta)}\right)$  is a subshift of $\left(\Sigma_{(\alpha^{\prime}, \beta^{\prime})}, \sigma_{(\alpha^{\prime},\beta^{\prime})}\right)$. From Lemma \ref{prepara1} and using a similar argument as in Theorem \ref{prepara1renorm} we have that $A^{\prime}$, $B^{\prime}$ and $C^{\prime}$ are transitive components of $\left(\Sigma_{(\alpha, \beta)}, \sigma_{(\alpha,\beta)}\right)$. Assume that there is another transitive component $D^{\prime}$.  From Theorem \ref{prepara1renorm} we have that $A^{\prime} \neq D^{\prime}$ and $C^{\prime} \neq D^{\prime}.$ Let $x \in D$. This implies that $\xi^{m_1^{\xi_{r_n}}+k}$ or $\zeta^{n_1^{\zeta_{r_n}}+k}$ are factors of $x$. Thus $x \in B^{\prime}$. Then $D^{\prime}  \subset B^{\prime}$ which gives $D^{\prime} = B^{\prime}$ since $B^{\prime}$ and $D^{\prime}$ are transitive components. 
\end{proof}

\begin{proposition}
Let $(\alpha, \beta) \in \mathcal{LW}$ be an essential pair with associated pair $(\omega, \nu)$ and $r = \frac{p}{q} \in \mathbb{Q} \cap (0,1)$. Then $\rho_{r}: \Sigma_{(\alpha, \beta)} \to \Sigma_{(\alpha^{\prime}, \beta^{\prime})}$ is continuous and injective where $\alpha = \sigma(\rho_{r}(\omega^{\infty})) $ and $\beta = \sigma(\rho_{r}(\nu^{\infty})).$ \label{auxiliar1}
\end{proposition}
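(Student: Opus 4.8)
The plan is to treat $\rho_r$ as a substitution of constant length $q=\ell(\xi_r)=\ell(\zeta_r)$ and to read off both properties from this block structure. First I would record the setup: since $\rho_r(0)=\xi_r$ and $\rho_r(1)=\zeta_r$ are two distinct words of the common length $q$ (they disagree already in their first symbol, as $\xi_r$ begins with $0$ and $\zeta_r$ begins with $1$), the induced map on sequences $\rho_r((x_i)_{i=1}^{\infty})=\rho_r(x_1)\rho_r(x_2)\cdots$ replaces each symbol by a length-$q$ block, so the blocks of $\rho_r(x)$ are aligned at the positions $1,q+1,2q+1,\dots$. I would also note the identities $0\alpha'=\rho_r(\omega^{\infty})=(\rho_r(\omega))^{\infty}$ and $1\beta'=\rho_r(\nu^{\infty})=(\rho_r(\nu))^{\infty}$, together with the commutation rule $\rho_r\circ\sigma=\sigma^{q}\circ\rho_r$, which relates shifts of the image to shifts of the input.

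For continuity I would use the block estimate directly. If $x\ne y$ and $j=\min\{i:x_i\ne y_i\}$, then the first $j-1$ blocks of $\rho_r(x)$ and $\rho_r(y)$ coincide, while their $j$-th blocks are $\xi_r$ and $\zeta_r$ (in some order), which differ already at their first coordinate; hence the first disagreement of $\rho_r(x)$ and $\rho_r(y)$ occurs exactly at coordinate $(j-1)q+1$, giving $d(\rho_r(x),\rho_r(y))=2^{-((j-1)q+1)}=2^{q-1}d(x,y)^{q}$. Since $d(x,y)\le\tfrac12$ whenever $x\ne y$, this yields $d(\rho_r(x),\rho_r(y))\le d(x,y)$, so $\rho_r$ is non-expanding and in particular uniformly continuous.

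Injectivity I would deduce from strict monotonicity. Suppose $x\ne y$, say $x\prec y$, and let $j$ be the first coordinate at which they differ, so $x_j=0<1=y_j$. The computation above shows that $\rho_r(x)$ and $\rho_r(y)$ first differ at coordinate $(j-1)q+1$, where the $j$-th blocks $\xi_r$ and $\zeta_r$ carry the symbols $0$ and $1$ respectively; hence $\rho_r(x)\prec\rho_r(y)$. Thus $\rho_r$ is strictly order-preserving, and in particular injective. Equivalently, one may argue by unique decipherability: because the two codewords have equal length and the image is aligned at position $1$, one recovers each $x_i$ by reading $\rho_r(x)$ in consecutive length-$q$ blocks and decoding $\xi_r\mapsto 0$, $\zeta_r\mapsto 1$; no phase ambiguity arises precisely because all blocks share the length $q$.

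I expect the only genuinely delicate point to be the well-definedness of the codomain, i.e.\ the inclusion $\rho_r(\Sigma_{(\alpha,\beta)})\subseteq\Sigma_{(\alpha',\beta')}$ underlying the statement, rather than continuity or injectivity. To establish it one must check $\beta'\preccurlyeq\sigma^{m}(\rho_r(x))\preccurlyeq\alpha'$ for every $m\ge 0$ and every $x\in\Sigma_{(\alpha,\beta)}$. For aligned shifts $m=nq$ one uses $\sigma^{nq}(\rho_r(x))=\rho_r(\sigma^{n}(x))$ together with $\beta\preccurlyeq\sigma^{n}(x)\preccurlyeq\alpha$ and the monotonicity of $\rho_r$ just proved; for the unaligned shifts the tail begins with a proper suffix of $\xi_r$ or $\zeta_r$, and here one invokes that $\xi_r=0\text{-}\max_{\xi_r}$ and $\zeta_r=1\text{-}\min_{\zeta_r}$ are cyclically balanced, so no internal shift of a block can exceed $\alpha'$ or fall below $\beta'$. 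This extremal, balanced-word input is the substantive step; continuity and injectivity are then immediate consequences of the constant-length block structure.
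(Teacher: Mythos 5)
Your proposal is correct and follows essentially the same route as the paper's own proof: both exploit the constant block length $q$, showing that agreement of $x$ and $y$ through coordinate $j$ propagates to agreement of $\rho_r(x)$ and $\rho_r(y)$ through coordinate $jq$ (continuity), and that a first disagreement at coordinate $j$ forces a disagreement in the image at coordinate $(j-1)q+1$ because $\xi_r$ and $\zeta_r$ already differ in their first symbol (injectivity). The differences are cosmetic---the paper phrases continuity sequentially while you give a uniform non-expansion estimate, and the paper's injectivity step contains index typos where yours is clean---though your closing observation is apt: the inclusion $\rho_r(\Sigma_{(\alpha,\beta)})\subseteq\Sigma_{(\alpha^{\prime},\beta^{\prime})}$ implicit in the statement is the one genuinely delicate point, and the paper's proof does not address it at all.
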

\begin{proof}
Let $\left\{x_n\right\}_{n=1}^{\infty} \subset \Sigma_{(\alpha, \beta)}$ such that $x_n \mathop{\longrightarrow}\limits_{n \to \infty} y$. Let $\varepsilon > 0$. Since $x_n \mathop{\longrightarrow}\limits_{n \to \infty} x$ there is $M \in \mathbb{N}$ such that for every $m \geq M$, $$d(x_m, y) < \frac{1}{2^M} < \varepsilon.$$ Thus, for every $m \geq M$, $(x_m)_i  = y_i$ for every $1 \leq i \leq M$. This implies that $\rho_{r}(x_m)_{i} = \rho_{r}(y)_i$ for every $1 \leq i \leq qM$. This gives that $$d(\rho_{r}(x_m), \rho_{r}(y)) \leq \frac{1}{2^{qM}} < \frac{1}{2^{M}} <\varepsilon.$$ To show that it is injective note that if $x = y$ then $\rho(x)_i = \rho(y)_i$ for every $1 \leq i \leq qk$ and $\rho(x)_{qk+1} = \rho(y)_{qk+1}$ where $k = \min{j \in \mathbb{N} : x_j \neq y_j}$.
\end{proof}

\begin{theorem}
Let $(\alpha, \beta)$ be an essential pair with associated pair $(\omega, \nu)$ and $(r_1, \ldots r_n) \in \left(\mathbb{Q}\cap(0,1)\right)^{n}$. Then $\left(\Sigma_{(\alpha^{\prime}, \beta^{\prime})},\sigma_{(\alpha^{\prime}, \beta^{\prime})}\right)$ has positive topological entropy where $$\alpha = \sigma(\rho_{r_n}\circ \ldots \rho_{r_1}(\omega^{\infty})) $$ and $$\beta = \sigma(\rho_{r_n}\circ \ldots \rho_{r_1}(\nu^{\infty})).$$ Moreover $$h_{top}(\sigma_{(\alpha^{\prime}, \beta^{\prime})}) = \frac{1}{q_1 \cdot \ldots \cdot q_n}h_{top}\left(\sigma_{(\alpha, \beta)}\right).$$ \label{lemadelaentropia}
\end{theorem}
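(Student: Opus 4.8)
The plan is to reduce the computation to the single transitive component of $\Sigma_{(\alpha',\beta')}$ that carries all the entropy, and then to count words through the substitution. Write $Q = q_1\cdots q_n$ and $\Phi = \rho_{r_n}\circ\cdots\circ\rho_{r_1}$; since each $\rho_{r_i}$ sends a single symbol to a word of length $q_i$, the composite $\Phi$ is a substitution of constant length $Q$ that sends $0$ and $1$ to two distinct words of length $Q$. First I would invoke Theorem \ref{prepara1renorm}, which tells us that the pair $(\alpha',\beta')$ of the statement gives a subshift $\Sigma_{(\alpha',\beta')}$ with exactly the two transitive components $A' = \bigcap_{m=-\infty}^{\infty}\sigma^m(\Phi(\Sigma_{(\alpha,\beta)}))$ and $B' = \{\xi_{r_n}^{\infty},\zeta_{r_n}^{\infty}\}$. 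Since $\xi_{r_n}$ and $\zeta_{r_n}$ are cyclic permutations of one another, $B'$ is a single periodic orbit and hence $h_{top}(\sigma_{B'}) = 0$. Applying the principle already used in the proof of Theorem \ref{lemota} (the entropy of the subshift equals the maximum of the entropies of its transitive components), we obtain $h_{top}(\sigma_{(\alpha',\beta')}) = h_{top}(\sigma_{A'})$, so everything reduces to computing $h_{top}(\sigma_{A'})$.

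Next I would compare the word complexities of $A'$ and of $\Sigma_{(\alpha,\beta)}$. By the description of $A'$ in Theorem \ref{prepara1renorm}, every word of $\mathcal{L}(A')$ is a factor of some $\Phi(x)$ with $x\in\Sigma_{(\alpha,\beta)}$. For the lower bound, note that $\Phi$ is injective, being a composition of the injective maps $\rho_{r_i}$ of Proposition \ref{auxiliar1}; because it has constant length $Q$ and sends $0,1$ to distinct blocks, distinct words $w\neq w'$ of length $k$ produce distinct length-$kQ$ images $\Phi(w)\neq\Phi(w')$, each lying in $B_{kQ}(A')$, so $|B_{kQ}(A')|\ge |B_k(\Sigma_{(\alpha,\beta)})|$. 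For the upper bound, any length-$kQ$ factor of a sequence $\Phi(x)$ is determined by an offset $j\in\{0,\ldots,Q-1\}$ inside a block together with the at most $k+1$ consecutive symbols of $x$ it overlaps, whence $|B_{kQ}(A')|\le Q\,|B_{k+1}(\Sigma_{(\alpha,\beta)})|$.

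Finally I would take logarithms, divide by $kQ$, and let $k\to\infty$. Both outer bounds converge to $\frac{1}{Q}h_{top}(\sigma_{(\alpha,\beta)})$ (the factor $\log Q$ and the shift from $k$ to $k+1$ are negligible in the limit), while $\tfrac{1}{kQ}\log|B_{kQ}(A')|\to h_{top}(\sigma_{A'})$ along the subsequence of multiples of $Q$; the squeeze then yields $h_{top}(\sigma_{A'}) = \frac{1}{q_1\cdots q_n}h_{top}(\sigma_{(\alpha,\beta)})$, which is the asserted formula. Positivity follows because $(\alpha,\beta)=(\sigma(\omega^{\infty}),\sigma(\nu^{\infty}))$ is essential, so, exactly as in Theorem \ref{lemota}, $h_{top}(\sigma_{(\alpha,\beta)})\ge h_{top}(\sigma_{\{\omega,\nu\}^{\infty}})=\log\tfrac1\lambda>0$, where $\lambda\in(0,1)$ is the root of $1-t^{\ell(\omega)}-t^{\ell(\nu)}$. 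I expect the main obstacle to be the bookkeeping in the upper bound: one must check that the offset $j$ and the boundary block are correctly absorbed into the harmless factor $Q\,|B_{k+1}(\cdot)|$, and, more conceptually, that the recognizability of $\Phi$ (guaranteed by its constant length together with $\Phi(0)\neq\Phi(1)$) is precisely what makes the lower bound sharp, so that no entropy is lost or spuriously gained in passing through the substitution.
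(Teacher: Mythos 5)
Your proof is correct, and it reaches the entropy formula by a genuinely different route than the paper, even though both arguments open the same way: by Theorem \ref{prepara1renorm} the subshift splits into the two transitive components $A'$ and $B'$, the entropy is the maximum of the component entropies, and $h_{top}(\sigma_{B'})=0$, so everything reduces to $h_{top}(\sigma_{A'})$. At that point the paper argues dynamically: writing $Q=q_1\cdots q_n$, it uses Proposition \ref{auxiliar1} together with an open-mapping argument (balls map to balls) to upgrade $\rho_{r_n}\circ\cdots\circ\rho_{r_1}$ to a homeomorphism of $\Sigma_{(\alpha,\beta)}$ onto $A'$ intertwining $\sigma$ with $\sigma^{Q}$, so that $\left(\Sigma_{(\alpha,\beta)},\sigma_{(\alpha,\beta)}\right)$ is conjugate to $\left(A',\sigma^{Q}\vert_{A'}\right)$, and then invokes the power rule $h_{top}(\sigma^{Q}\vert_{A'})=Q\,h_{top}(\sigma\vert_{A'})$ cited from Katok--Hasselblatt. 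You instead work purely at the level of languages: injectivity and constant length of the composite substitution give the sandwich $\left|B_k(\Sigma_{(\alpha,\beta)})\right|\le\left|B_{kQ}(A')\right|\le Q\left|B_{k+1}(\Sigma_{(\alpha,\beta)})\right|$, and the squeeze along multiples of $Q$ yields the formula. The trade-off is that the paper's route is shorter given the cited machinery, while yours is more elementary and slightly more robust: it needs only the identification of $\mathcal{L}(A')$ with the set of factors of substitution images of points of $\Sigma_{(\alpha,\beta)}$ (both directions of your sandwich do rely on this, and it is exactly what the paper extracts from Theorem \ref{prepara1renorm} via the equality $\rho_{r_n}\circ\cdots\circ\rho_{r_1}(\Sigma_{(\alpha,\beta)})=A'$), but it avoids the topological claims that the substitution is open and a homeomorphism onto $A'$, as well as the external power rule. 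You also spell out the positivity assertion explicitly, via $h_{top}(\sigma_{(\alpha,\beta)})\ge h_{top}\left(\sigma_{\left\{\omega,\nu\right\}^{\infty}}\right)>0$ as in Theorem \ref{lemota}, whereas the paper leaves positivity implicit in the formula; this is a small but genuine improvement in completeness.
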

\begin{proof}
From Theorem \ref{prepara1renorm} and \cite[Proposition 3.17 (2)]{katok}, we have that $$h_{top}(\sigma_{(\alpha^{\prime},\beta^{\prime})}) = \max\left\{h_{top}(\sigma\mid_{A^{\prime}}), h_{top}(\sigma\mid_{B^{\prime}})\right\}.$$ Moreover, from the construction of $A^{\prime}$ on Theorem \ref{prepara1renorm} and Proposition \ref{auxiliar1} we have that $$\rho_{r_n}\circ \ldots \circ \rho_{r_1}(\Sigma_{(\alpha, \beta)}) = A^{\prime}.$$ Let $x \in \Sigma_{(\alpha, \beta)}$ and $\varepsilon > 0$. Note that $$\rho_{r_n}\circ \ldots \circ \rho_{r_1}(B^{d}_{\varepsilon}(x)) = B^d_{(q_1\cdot \ldots \cdot q_n)\cdot \varepsilon}(\rho_{r_n}\circ \ldots \circ \rho_{r_1}(x)).$$ Thus, $\rho_{r_n}\circ \ldots \circ \rho_{r_1}$ is an open map and therefore an homeomorphism. Let $x = (x_i)_{i=1}^{\infty}\in \Sigma_{(\alpha, \beta)}$. Note that 
\begin{align*}
\rho_{r_n}\circ \ldots \circ \rho_{r_1}(\sigma(x)) &= \rho_{r_n}\circ \ldots \circ \rho_{r_1}((x_{i+1})_{i=1}^{\infty})\\
&= \sigma^{q_1\cdot \ldots \cdot q_n}(\rho_{r_n}\circ \ldots \circ \rho_{r_1}(x_{i})_{i=1}^{\infty}). 
\end{align*}
Thus, $\rho_{r_n}\circ \ldots \circ \rho_{r_1}$ is a conjugacy between $\left(\Sigma_{(\alpha, \beta)},\sigma_{(\alpha, \beta)}\right)$ and $\left(A^{\prime}, \sigma^{q_1\cdot \ldots \cdot q_n}\mid_{A^{\prime}}\right)$. Then from \cite[Proposition 3.17 (3)]{katok} we have that $$h_{top}(\sigma_{(\alpha, \beta)}) = h_{top}(\sigma^{q_1\cdot \ldots \cdot q_n}\mid_{A^{\prime}}) = q_1\cdot \ldots \cdot q_n\cdot h_{top}(\sigma\mid_{A^{\prime}}),$$ and $h_{top}(B^{\prime}) = 0$ the proof is complete. 
\end{proof}

\begin{lemma}
Let $(\alpha, \beta)$ such that $(0\alpha, 1\beta) \in B^n_{r_1, \ldots, r_n}(\omega,\nu)$ where $(\omega, \nu)$ is the associated pair of an essential pair $(\alpha^{\prime}, \beta^{\prime})$. Then $$h_{top}(\sigma_{B^{\prime}}) = \frac{1}{q_1\cdot \ldots \cdot q_n}h_{top}(B) \leq \frac{1}{q_1\cdot \ldots \cdot q_n}h_{top}\left(\sigma_{\left\{\omega,\nu\right\}^{\infty}}\right).$$ \label{prepara3renorm}
\end{lemma}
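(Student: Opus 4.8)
The plan is to reduce the statement to the entropy computation already carried out in Theorem \ref{lemadelaentropia}, applied now to the non-trivial component $B$ of Lemma \ref{prepara1} rather than to the whole essential subshift. Write $\rho = \rho_{r_n} \circ \cdots \circ \rho_{r_1}$ and $q = q_1 \cdots q_n$. Since $(0\alpha, 1\beta) \in \mathcal{B}^n_{r_1, \ldots, r_n}(\omega, \nu)$, Corollary \ref{prepara2renorm} identifies the component $B'$ as $\bigcap_{m=-\infty}^{\infty} \sigma^m(\rho(B))$, where $B$ is the component of the unrenormalised subshift sitting inside $\mathcal{B}^0(\omega, \nu)$ produced by Lemma \ref{prepara1}. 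The first task is therefore to record that $\rho$ restricts to a topological conjugacy between $(B, \sigma|_B)$ and $(B', \sigma^q|_{B'})$.

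First I would verify the intertwining relation $\rho \circ \sigma = \sigma^q \circ \rho$ on $B$: each $\rho_{r_i}$ replaces a single symbol by a cyclically balanced word of length $q_i$, so $\rho_{r_i}$ is a substitution of constant length $q_i$ and the composition $\rho$ has constant length $q = q_1 \cdots q_n$; advancing the argument by one symbol therefore advances the image by exactly $q$ symbols. Because $B$ is a transitive component it is completely invariant, so $\sigma(B) = B$ and hence $\sigma^q(\rho(B)) = \rho(\sigma(B)) = \rho(B)$; thus $\rho(B)$ is $\sigma^q$-invariant and $B'$ is its completely $\sigma$-invariant hull. Continuity and injectivity of $\rho$ come from Proposition \ref{auxiliar1}, and the open-map estimate $\rho(B^{d}_{\varepsilon}(x)) = B^{d}_{q\varepsilon}(\rho(x))$ established in the proof of Theorem \ref{lemadelaentropia} shows that $\rho$ is a homeomorphism onto its image. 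Consequently $\rho$ is a conjugacy between $(B, \sigma|_B)$ and $(B', \sigma^q|_{B'})$, exactly as in the $A'$ case of Theorem \ref{lemadelaentropia}.

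With the conjugacy in hand, the power rule for topological entropy \cite[Proposition 3.17 (3)]{katok} gives $h_{top}(\sigma|_B) = h_{top}(\sigma^q|_{B'}) = q \cdot h_{top}(\sigma_{B'})$, and therefore
$$h_{top}(\sigma_{B'}) = \frac{1}{q_1 \cdots q_n} h_{top}(B).$$
The inequality is then immediate from Lemma \ref{prepara2}, which gives the strict bound $h_{top}(\sigma_B) < h_{top}(\sigma_{\left\{\omega, \nu\right\}^{\infty}})$ for the unrenormalised component; dividing through by the positive constant $q_1 \cdots q_n$ yields
$$h_{top}(\sigma_{B'}) = \frac{1}{q_1 \cdots q_n} h_{top}(B) \leq \frac{1}{q_1 \cdots q_n} h_{top}\left(\sigma_{\left\{\omega, \nu\right\}^{\infty}}\right),$$
which is the asserted estimate.

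I expect the only delicate point to be the bookkeeping around the component $B'$: one must check that passing from $\rho(B)$ to its completely invariant hull $\bigcap_{m} \sigma^m(\rho(B))$ does not change the entropy, so that the clean conjugacy $(B, \sigma|_B) \cong (B', \sigma^q|_{B'})$, and hence the factor $\frac{1}{q_1 \cdots q_n}$, is legitimate. This is the same subtlety already handled for $A'$ in Theorem \ref{lemadelaentropia}, since all $q$ phases of the substitution image carry the same $\sigma^q$-entropy; the remaining steps are the mechanical application of the power rule and of Lemma \ref{prepara2}.
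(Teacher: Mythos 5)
Your proposal is correct and follows essentially the same route as the paper: identify $B^{\prime}$ via Corollary \ref{prepara2renorm}, observe that $\rho_{r_n}\circ\cdots\circ\rho_{r_1}$ conjugates $(B,\sigma\mid_B)$ with $(B^{\prime},\sigma^{q_1\cdots q_n}\mid_{B^{\prime}})$ exactly as in Theorem \ref{lemadelaentropia}, apply the power rule for entropy, and finish with Lemma \ref{prepara2}. The only difference is that you spell out the intertwining relation, injectivity, and openness of the substitution, which the paper compresses into the phrase ``as in Theorem \ref{lemadelaentropia}.''
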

\begin{proof}
From Lemma \ref{prepara2} we have that $h_{top}(\sigma_B) \leq h_{top}(\sigma_{\left\{\omega,\nu\right\}^{\infty}}).$ From Corollary \ref{prepara2renorm} we have that $$B^{\prime} = \mathop{\bigcap}\limits_{m=-\infty}^{\infty}\sigma^{m}\left(\rho_{r_n}\circ \ldots \circ \rho_{r_1}(B)\right).$$ As in Theorem \ref{lemadelaentropia} $\rho_{r_n}\circ \ldots \circ \rho_{r_1}$ is a conjugacy between $(B, \sigma^{q_1\cdot \ldots \cdot q_n})$ and $(B^{\prime}, \sigma_{B^{\prime}})$. Then $$h_{top}(\sigma_{B^{\prime}}) = \frac{1}{q_1\cdot \ldots \cdot q_n}h_{top}(B) \leq \frac{1}{q_1\cdot \ldots \cdot q_n}h_{top}\left(\sigma_{\left\{\omega,\nu\right\}^{\infty}}\right)$$ holds. 
\end{proof}

\begin{theorem}
$\left(\Sigma_{(\alpha,\beta)}, \sigma_{(\alpha,\beta)}\right)$ has a unique transitive component of maximal entropy for every $\left(\Sigma_{(\alpha,\beta)}, \sigma_{(\alpha,\beta)}\right)$ subshift of finite type with $(\alpha, \beta) \in \mathcal{LW} \cap \pi^{-1}(D_1 \setminus D_2)$. \label{lemotarenorm}
\end{theorem}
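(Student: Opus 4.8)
The plan is to reduce Theorem \ref{lemotarenorm} to the already-established analysis of transitive components via the renormalisation machinery developed in Theorem \ref{prepara1renorm}, Corollary \ref{prepara2renorm}, and the entropy computations of Theorem \ref{lemadelaentropia} and Lemma \ref{prepara3renorm}. The key structural fact is that for $(\alpha,\beta) \in \mathcal{LW}\cap\pi^{-1}(D_1\setminus D_2)$, the remarks preceding Definition \ref{nrenormalisationbox} guarantee that $(\alpha,\beta)$ is renormalisable by $(\xi_r,\zeta_r)$ for some $r = \frac{p}{q} \in \mathbb{Q}\cap(0,1)$; hence $(0\alpha,1\beta)$ lands inside an $n$-renormalisation box $\mathcal{B}^n_{(r_1,\ldots,r_n)}(\omega,\nu)$ attached to the associated pair $(\omega,\nu)$ of an essential pair. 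Thus the work already done exhibits exactly three transitive components $A^{\prime}$, $B^{\prime}$ and $C^{\prime}$, and what remains is to compare their topological entropies and single out a unique one attaining the maximum.

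First I would invoke \cite[Proposition 2.5.5]{brin} (as in Theorem \ref{lemota}) to write $h_{top}(\sigma_{(\alpha,\beta)}) = \max\{h_{top}(\sigma_{A^{\prime}}), h_{top}(\sigma_{B^{\prime}}), h_{top}(\sigma_{C^{\prime}})\}$, since the topological entropy of a system with finitely many transitive components equals the maximum over the components. The component $C^{\prime} = \{\xi_{r_n}^{\infty}, \zeta_{r_n}^{\infty}\}$ consists of a single periodic orbit, so $h_{top}(\sigma_{C^{\prime}}) = 0$ and it can be discarded immediately (recall these sequences are cyclic permutations of one another). The comparison between $A^{\prime}$ and $B^{\prime}$ is where the substitution $\rho_{r_n}\circ\cdots\circ\rho_{r_1}$ does the essential work: by Theorem \ref{lemadelaentropia} it is a conjugacy between $(\Sigma_{(\alpha,\beta)},\sigma_{(\alpha,\beta)})$ and $(A^{\prime},\sigma^{q_1\cdots q_n}\mid_{A^{\prime}})$, yielding $h_{top}(\sigma_{A^{\prime}}) = \frac{1}{q_1\cdots q_n}h_{top}(\sigma_{(\sigma(\omega^{\infty}),\sigma(\nu^{\infty}))})$, while Lemma \ref{prepara3renorm} gives $h_{top}(\sigma_{B^{\prime}}) = \frac{1}{q_1\cdots q_n}h_{top}(\sigma_B)$.

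With these two formulas in hand the proof reduces to the strict inequality at the base level. The crucial point is that by Lemma \ref{prepara2} one has $h_{top}(\sigma_B) < h_{top}(\sigma_{\{\omega,\nu\}^{\infty}})$, and by \cite[Lemma 8, Lemma 9]{hall} one has $h_{top}(\sigma_{(\sigma(\omega^{\infty}),\sigma(\nu^{\infty}))}) \geq h_{top}(\sigma_{\{\omega,\nu\}^{\infty}})$, exactly as assembled in Theorem \ref{lemota}. Since $\frac{1}{q_1\cdots q_n}$ is a common positive factor, multiplying the chain $h_{top}(\sigma_B) < h_{top}(\sigma_{\{\omega,\nu\}^{\infty}}) \leq h_{top}(\sigma_{(\sigma(\omega^{\infty}),\sigma(\nu^{\infty}))})$ through by it preserves the strict inequality, so that $h_{top}(\sigma_{B^{\prime}}) < h_{top}(\sigma_{A^{\prime}})$. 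Hence $A^{\prime}$ is the unique transitive component realising the maximal entropy, which is precisely the assertion of the theorem.

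The main obstacle I anticipate is not the inequality itself but ensuring that the conjugacy-based entropy identities apply cleanly when $(\Sigma_{(\alpha,\beta)},\sigma_{(\alpha,\beta)})$ is only a subshift of finite type rather than the full renormalised essential system: one must check that $A^{\prime}$ and $B^{\prime}$ here are genuinely the images under $\rho_{r_n}\circ\cdots\circ\rho_{r_1}$ of the base components $A$ and $B$ of Lemma \ref{prepara1}, so that Theorem \ref{lemadelaentropia} and Lemma \ref{prepara3renorm} are literally applicable. This is guaranteed by Corollary \ref{prepara2renorm}, which identifies the three components of a finite-type $(\Sigma_{(\alpha^{\prime},\beta^{\prime})},\sigma_{(\alpha^{\prime},\beta^{\prime})})$ inside an $n$-renormalisation box precisely as $A^{\prime} = \bigcap_m \sigma^m(\rho_{r_n}\circ\cdots\circ\rho_{r_1}(A))$ and $B^{\prime} = \bigcap_m \sigma^m(\rho_{r_n}\circ\cdots\circ\rho_{r_1}(B))$, so no further structural argument is needed; the remaining steps are the bookkeeping of the entropy bounds described above.
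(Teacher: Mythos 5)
Your proposal is correct and follows essentially the same route as the paper's own proof: the paper likewise combines Corollary \ref{prepara2renorm} (identification of the components $A^{\prime}$, $B^{\prime}$, $C^{\prime}$), Theorem \ref{lemadelaentropia} and Lemma \ref{prepara3renorm} (the $\frac{1}{q_1\cdots q_n}$ entropy scaling), and the base-level strict inequality $h_{top}(\sigma_B) < h_{top}(\sigma_A)$ from Theorem \ref{lemota}. Your write-up merely unfolds the citations the paper invokes in a single sentence, so no substantive difference exists.
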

\begin{proof}
From Theorem \ref{lemota} we have that $h_{top}(\sigma_{B})< h_{top}(\sigma_{A})$. Then by Corollary \ref{prepara2renorm}, Theorem \ref{lemadelaentropia} and Lemma \ref{prepara3renorm} we have that $A^{\prime}$ is the unique component of maximal entropy of $\left(\Sigma_{(\alpha,\beta)}, \sigma_{\alpha, \beta}\right)$.
\end{proof}

As a consequence of Corollary \ref{notipofinito1} and Theorem \ref{lemotarenorm} we obtain immediately the following result.

\begin{corollary}
Let $(\alpha, \beta)$ be a renormalisable pair by $\omega$ and $\nu$ such that $(0\alpha, 1\beta) \in \mathcal{B}^0(\omega, \nu)$ with $(0\alpha, 1\beta) \neq (\omega\nu^{\infty}, \nu\omega^{\infty})$. Then $(\Sigma_{(\alpha, \beta)}, \sigma_{(\alpha, \beta)})$ has a unique transitive component of maximal entropy.\label{notipofinito1renorm}
\end{corollary}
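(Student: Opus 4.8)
The plan is to run the entropy comparison of Corollary \ref{notipofinito1} one level deeper, pushing the strict gap between the two transitive components of the base box through the substitution operator $\rho := \rho_{r_n} \circ \ldots \circ \rho_{r_1}$ by means of the scaling established in Theorem \ref{lemadelaentropia} and Lemma \ref{prepara3renorm}. First I would record the list of transitive components. By Theorem \ref{lemacajas1} the hypotheses force $(\alpha, \beta)$ to be renormalisable by $\omega$ and $\nu$, and by Corollary \ref{prepara2renorm} (together with Theorem \ref{prepara1renorm}) the only transitive components of $\Sigma_{(\alpha, \beta)}$ are the large component $A' = \bigcap_{m} \sigma^m(\rho(A))$, the renewal component $B' = \bigcap_{m}\sigma^m(\rho(B))$, and the periodic component $C' = \{\xi_{r_n}^\infty, \zeta_{r_n}^\infty\}$, where $A$ and $B$ are the base components of Lemma \ref{prepara1}. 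Since $h_{top}(\sigma_{C'}) = 0$ while $h_{top}(\sigma_{A'}) > 0$ by Theorem \ref{lemadelaentropia}, it suffices to prove the strict inequality $h_{top}(\sigma_{B'}) < h_{top}(\sigma_{A'})$.

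The core step is the scaling. The proofs of Theorem \ref{lemadelaentropia} and Lemma \ref{prepara3renorm} show that $\rho$ conjugates $(A, \sigma)$ with $(A', \sigma^{q})$ and $(B, \sigma)$ with $(B', \sigma^{q})$, where $q = q_1 \cdots q_n$; hence $h_{top}(\sigma_{A'}) = \tfrac{1}{q} h_{top}(\sigma_A)$ and $h_{top}(\sigma_{B'}) = \tfrac{1}{q} h_{top}(\sigma_B)$. At the base level Corollary \ref{notipofinito1} (resting on Lemma \ref{prepara2} and Theorem \ref{lemota}) gives the strict gap $h_{top}(\sigma_B) < h_{top}(\sigma_A)$, and dividing by the common positive factor $q$ preserves it, so $h_{top}(\sigma_{B'}) < h_{top}(\sigma_{A'})$. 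When $\Sigma_{(\alpha, \beta)}$ is of finite type this is exactly the content of Theorem \ref{lemotarenorm}, and together with the component list above it identifies $A'$ as the unique transitive component of maximal entropy.

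The hard part will be the case where $(\alpha, \beta)$ is renormalisable but not of finite type, i.e. coded, since then $B$ (and hence $B'$) need not be a subshift of finite type and Theorem \ref{lemotarenorm} does not apply on the nose. I would treat it exactly as in the coded case of Corollary \ref{notipofinito1}: invoking \cite[Lemma 5.6]{yomero3} to trap $(0\alpha, 1\beta)$ strictly between two finite-type renormalisable pairs lying in the same box, with renewal components $B'_-$ and $B'_+$ obeying $h_{top}(\sigma_{B'_-}) \leq h_{top}(\sigma_{B'}) \leq h_{top}(\sigma_{B'_+})$. Applying Theorem \ref{lemotarenorm} to the finite-type upper endpoint yields $h_{top}(\sigma_{B'_+}) < h_{top}(\sigma_{A'})$, and the sandwich forces $h_{top}(\sigma_{B'}) < h_{top}(\sigma_{A'})$. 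Since $h_{top}(\sigma_{(\alpha, \beta)}) = \max\{h_{top}(\sigma_{A'}), h_{top}(\sigma_{B'}), h_{top}(\sigma_{C'})\}$, the component $A'$ attains the maximum uniquely, which is the assertion.
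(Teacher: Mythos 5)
Your proposal is correct and takes essentially the same route as the paper: the paper disposes of this corollary in a single line, declaring it an immediate consequence of Corollary \ref{notipofinito1} and Theorem \ref{lemotarenorm}, and your argument is exactly that derivation written out in full --- the finite-type case handled by the entropy scaling of Theorem \ref{lemadelaentropia} and Lemma \ref{prepara3renorm} underlying Theorem \ref{lemotarenorm}, and the non-finite-type case handled by the sandwich technique from the proof of Corollary \ref{notipofinito1} applied one renormalisation level up. Nothing in your expansion deviates from the paper's intended combination of those two results.
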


Observe that Theorem \ref{lemotarenorm} will imply that every lexicographic subshift of finite type is intrinsically ergodic as we state as follows.

\begin{theorem}
If $(\alpha, \beta)$ satisfies that $(0\alpha, 1\beta) \in \mathcal{B}^0(\omega, \nu)$ and $(\Sigma_{(\alpha, \beta)}, \sigma_{(\alpha, \beta)})$ is a subshift of finite type then $(\Sigma_{(\alpha, \beta)}, \sigma_{(\alpha, \beta)})$ is intrinsically ergodic. \label{ergodicidadintrinseca1renorm}
\end{theorem}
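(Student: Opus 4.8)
The plan is to reproduce the mechanism of Theorem \ref{ergodicidadintrinseca1}, but to draw the decisive structural input from the unique–maximal–component result now available in its renormalised form, Corollary \ref{notipofinito1renorm}. I would begin by locating $(0\alpha,1\beta)$ inside the box $\mathcal{B}^0(\omega,\nu)$. If $(0\alpha,1\beta) = (\omega^\infty,\nu^\infty)$, then $(\alpha,\beta) = (\sigma(\omega^\infty),\sigma(\nu^\infty))$ is an essential pair, so $\left(\Sigma_{(\alpha,\beta)},\sigma_{(\alpha,\beta)}\right)$ is a transitive subshift of finite type and is intrinsically ergodic by \cite{parry}; nothing further is needed. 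Otherwise $(0\alpha,1\beta)\neq(\omega^\infty,\nu^\infty)$, and here the finite-type hypothesis does genuine work: at the remaining corner $(\omega\nu^\infty,\nu\omega^\infty)$ the sequence $\alpha = \sigma(\omega\nu^\infty)$ is preperiodic but not periodic (since $\omega$ begins with $0$ and $\nu$ with $1$), so that corner produces a sofic, non–finite-type subshift. Thus the standing assumption that $\left(\Sigma_{(\alpha,\beta)},\sigma_{(\alpha,\beta)}\right)$ is of finite type forces $(0\alpha,1\beta)\neq(\omega\nu^\infty,\nu\omega^\infty)$ as well, placing us exactly under the hypotheses of Corollary \ref{notipofinito1renorm}.

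From Corollary \ref{notipofinito1renorm} the system has a unique transitive component of maximal entropy, and by Lemma \ref{prepara1} this component is $A = \Sigma_{(\sigma(\omega^\infty),\sigma(\nu^\infty))}$, the second component being the set $B\subsetneq\left\{\omega,\nu\right\}^\infty$ of that lemma. The key preliminary observation is that $A$ is itself intrinsically ergodic: $(\sigma(\omega^\infty),\sigma(\nu^\infty))$ is an essential pair, so $(A,\sigma\mid_A)$ is a transitive subshift of finite type and \cite{parry} again applies. Let $\mu$ be its unique measure of maximal entropy.

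Next I would transport $\mu$ to the whole system by setting $\mu_{(\alpha,\beta)}(E) = \mu(E\cap A)$ for every Borel set $E$, exactly as in the proof of Theorem \ref{ergodicidadintrinseca1}. Because $A$ is the component of maximal entropy, $h_{\mu_{(\alpha,\beta)}} = h_\mu = h_{top}(\sigma\mid_A) = h_{top}(\sigma_{(\alpha,\beta)})$, so $\mu_{(\alpha,\beta)}$ is a measure of maximal entropy supported on $A$.

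For uniqueness I would suppose $\eta\neq\mu_{(\alpha,\beta)}$ is another measure of maximal entropy; replacing $\eta$ by one of its ergodic components (each again maximising, by affinity of entropy) I may take $\eta$ ergodic. Then $\eta$ and $\mu_{(\alpha,\beta)}$ are distinct ergodic measures, hence mutually singular \cite[Theorem 6.10]{walters}, so $\eta(\mathrm{supp}\,\mu_{(\alpha,\beta)}) = 0$ and $\mathrm{supp}(\eta)\subset\Sigma_{(\alpha,\beta)}\setminus A$. Since an ergodic invariant measure concentrates on a single transitive component, $\mathrm{supp}(\eta)\subset B$, whence $h_\eta\le h_{top}(\sigma_B) < h_{top}(\sigma_{(\alpha,\beta)})$ by Lemma \ref{prepara2} and Theorem \ref{lemota}, contradicting maximality of $\eta$. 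I expect the only genuinely delicate points to be the two standard measure-theoretic facts invoked in this last step—that distinct ergodic maximising measures are mutually singular and that an ergodic measure is carried by one transitive component—while the real dynamical content, namely that $A$ is the unique component of full entropy and that the complementary component $B$ has strictly smaller entropy, has already been secured in Lemma \ref{prepara1}, Lemma \ref{prepara2} and Corollary \ref{notipofinito1renorm}.
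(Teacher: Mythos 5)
Your argument is essentially the paper's own: the paper omits the proof of this theorem, stating it is an easy modification of Theorem \ref{ergodicidadintrinseca1}, and your proof reproduces exactly that mechanism (unique transitive component of maximal entropy, extension of its measure of maximal entropy to the whole subshift, mutual singularity forcing any rival maximising measure onto the strictly lower-entropy component), with the welcome extra care of passing to ergodic components of $\eta$ and of making explicit that the finite-type hypothesis excludes the corner $(0\alpha,1\beta)=(\omega\nu^{\infty},\nu\omega^{\infty})$ where the non--intrinsically-ergodic examples of Theorem \ref{canon1} live. Note only that the paper evidently intends this theorem as the renormalised analogue of Theorem \ref{ergodicidadintrinseca1} (the printed $\mathcal{B}^0(\omega,\nu)$ hypothesis duplicates that theorem verbatim), and under that reading your proof goes through unchanged upon replacing $A$ and $B$ by the components $A^{\prime}$, $B^{\prime}$, $C^{\prime}$ of Corollary \ref{prepara2renorm} and invoking Theorem \ref{lemotarenorm} and Lemma \ref{prepara3renorm} in place of Theorem \ref{lemota} and Lemma \ref{prepara2}.
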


The proof of Theorem \ref{ergodicidadintrinseca1renorm} is an easy modification of the proof of Theorem \ref{ergodicidadintrinseca1}, so we will omit the proof. Moreover, as it was done in Corollary \ref{ergodicidadintrinseca2}, we can extend Theorem \ref{ergodicidadintrinseca1renorm} to subshifts which are not of finite type necessarily as it is stated in the following corollary. The proof is also omitted.  

\begin{corollary}
Let $(\alpha, \beta)$ be a renormalisable pair by $\omega$ and $\nu$ such that $(0\alpha, 1\beta) \in \mathcal{B}^0(\omega, \nu)$ with $(0\alpha, 1\beta) \neq (\omega\nu^{\infty}, \nu\omega^{\infty})$. Then $(\Sigma_{(\alpha, \beta)}, \sigma_{(\alpha, \beta)})$ is intrinsically ergodic.\label{ergodicidadintrinseca2renorm}
\end{corollary}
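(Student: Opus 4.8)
The plan is to repeat the measure-theoretic argument from the proof of Theorem \ref{ergodicidadintrinseca1}, feeding in Corollary \ref{notipofinito1renorm} in place of the finite-type uniqueness statement (Theorem \ref{lemota}). The only role the finite-type hypothesis plays in Theorem \ref{ergodicidadintrinseca1} is to guarantee the existence of a \emph{unique} transitive component of maximal entropy; once this is supplied by Corollary \ref{notipofinito1renorm}, which makes no finiteness assumption, the rest of the argument is insensitive to whether $(\Sigma_{(\alpha,\beta)},\sigma_{(\alpha,\beta)})$ is of finite type. So the whole point is to reroute the appeal to uniqueness of the maximal component through the more general Corollary \ref{notipofinito1renorm}.

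Concretely, first I would let $A'$ denote the unique transitive component of maximal entropy furnished by Corollary \ref{notipofinito1renorm}. By Theorem \ref{prepara1renorm} together with the renormalisation conjugacy of Theorem \ref{lemadelaentropia}, the map $\rho_{r_n}\circ\cdots\circ\rho_{r_1}$ identifies $(A',\sigma^{q_1\cdots q_n}\mid_{A'})$ with the shift of an essential pair, a transitive subshift of finite type and hence intrinsically ergodic by \cite{parry}. Intrinsic ergodicity then passes from the power $\sigma^{q_1\cdots q_n}\mid_{A'}$ to $\sigma\mid_{A'}$ by a standard argument: a $\sigma\mid_{A'}$-maximising measure is also $\sigma^{q_1\cdots q_n}$-maximising, and the unique such measure is $\sigma$-invariant by its own uniqueness. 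I would then push the resulting measure of maximal entropy $\mu$ of $(A',\sigma\mid_{A'})$ forward to the whole space by setting $\mu_{(\alpha,\beta)}(E)=\mu(E\cap A')$; since $h_{top}(\sigma\mid_{A'})=h_{top}(\sigma_{(\alpha,\beta)})$ because $A'$ is the maximal component, we get $h_{\mu_{(\alpha,\beta)}}=h_{top}(\sigma_{(\alpha,\beta)})$, so $\mu_{(\alpha,\beta)}$ is a measure of maximal entropy supported on $A'$.

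For uniqueness I would argue exactly as in Theorem \ref{ergodicidadintrinseca1}: if there were a second measure of maximal entropy, one of its ergodic components would again be a measure of maximal entropy, so we may produce an ergodic measure of maximal entropy $\eta\neq\mu_{(\alpha,\beta)}$ (\cite[Theorem 8.7]{walters}). Two distinct ergodic measures are mutually singular \cite[Theorem 6.10]{walters}, and the support of an ergodic measure is a closed, invariant, topologically transitive set, hence is contained in a single transitive component; mutual singularity forces this component to be distinct from $A'$. By Corollary \ref{notipofinito1renorm} every transitive component other than $A'$ has strictly smaller entropy, whence $h_{\eta}<h_{top}(\sigma_{(\alpha,\beta)})$, contradicting maximality. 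Therefore $\mu_{(\alpha,\beta)}$ is the unique measure of maximal entropy and the system is intrinsically ergodic.

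The step I expect to be delicate is ruling out a competing measure of maximal entropy living off $A'$ when the system is genuinely coded rather than of finite type: there may now be infinitely many transitive components, so it is not enough that each \emph{individually} has smaller entropy—one needs a uniform gap preventing the entropies of the non-maximal components from accumulating up to $h_{top}(\sigma_{(\alpha,\beta)})$. This is exactly what Lemma \ref{prepara3renorm} provides, bounding the entropy of the $B$-type components by $\tfrac{1}{q_1\cdots q_n}h_{top}(\sigma_{\{\omega,\nu\}^{\infty}})$, which is strictly below $h_{top}(\sigma\mid_{A'})$ by the strict inequality \cite[Lemma 8, Lemma 9]{hall} already used in Theorem \ref{lemota}. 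Combined with the fact that every ergodic measure concentrates on a single transitive component, this strict gap is what actually closes the argument in the non-finite-type case.
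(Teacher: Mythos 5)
Your proposal is correct and follows essentially the same route the paper intends: the paper omits this proof precisely because it is the routine extension of Theorem \ref{ergodicidadintrinseca1renorm} via Corollary \ref{notipofinito1renorm} (just as Corollary \ref{ergodicidadintrinseca2} extends Theorem \ref{ergodicidadintrinseca1}), namely, the unique maximal-entropy transitive component $A'$ carries a unique measure of maximal entropy via the renormalisation conjugacy, one extends it by $\mu_{(\alpha,\beta)}(E)=\mu(E\cap A')$, and any competing ergodic measure of maximal entropy is forced onto components of strictly smaller entropy. Your flagging of the entropy gap for the non-finite-type case as the delicate point, and closing it with Lemma \ref{prepara3renorm} together with the strict inequality from Theorem \ref{lemadelaentropia} and \cite[Lemma 8, Lemma 9]{hall}, is exactly the role those results play in the paper's Theorem \ref{lemotarenorm}.
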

 
Recall that $$\mathcal{C} = \left\{(\alpha, \beta) \in \mathcal{LW}: \left(\Sigma_{(\alpha,\beta)}, \sigma_{(\alpha,\beta)}\right) \hbox{\rm{ is coded }}  \right\}$$ and $\mathcal{C}^{\prime} = \left\{(0\alpha, 1\beta) : (\alpha, \beta) \in \mathcal{C}\right\}$. 

\begin{theorem}
Let $n \in \mathbb{N}$ and $(r_1,\ldots,r_n)\in (\mathbb{Q}\cap(0,1))^{n}$. If $(0\alpha^{\prime}, 1\beta^{\prime}) \in \rho_{r_n}\circ \ldots \circ \rho_{r_1}(\mathcal{C}^{\prime})$ then $\left(\Sigma_{(\alpha^{\prime}, \beta^\prime)}, \sigma_{(\alpha^{\prime}, \beta^\prime)}\right)$ is intrinsically ergodic. \label{renormandcoded}
\end{theorem}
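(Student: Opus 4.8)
The plan is to reduce to the intrinsic ergodicity of the underlying coded base and then transport it through the renormalisation conjugacy, in exact analogy with the essential-pair arguments of Theorems \ref{ergodicidadintrinseca1} and \ref{ergodicidadintrinseca1renorm}. Write $\rho = \rho_{r_n}\circ \ldots \circ \rho_{r_1}$ and $q = q_1\cdots q_n$, and let $(\alpha,\beta) \in \mathcal{C}$ be the coded pair for which $(0\alpha',1\beta') = (\rho(0\alpha),\rho(1\beta))$. Since $(\alpha,\beta)$ is coded, $(\Sigma_{(\alpha,\beta)},\sigma_{(\alpha,\beta)})$ is the increasing closure of transitive subshifts of finite type and is therefore itself topologically transitive, i.e. it forms a single transitive component.

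First I would describe the transitive components of $\Sigma_{(\alpha',\beta')}$. Because the base is transitive rather than a reducible subshift of finite type sitting inside a box, the situation is that of Theorem \ref{prepara1renorm} rather than Corollary \ref{prepara2renorm}: there are exactly two components, the main one $A' = \bigcap_{m=-\infty}^{\infty}\sigma^m(\rho(\Sigma_{(\alpha,\beta)}))$ and the auxiliary orbit closure $B' = \{\xi_{r_n}^{\infty},\zeta_{r_n}^{\infty}\}$. By Proposition \ref{auxiliar1} and Theorem \ref{lemadelaentropia}, $\rho$ is a conjugacy between $(\Sigma_{(\alpha,\beta)},\sigma_{(\alpha,\beta)})$ and $(A',\sigma^{q}\!\mid_{A'})$, so that $h_{top}(\sigma\!\mid_{A'}) = \tfrac{1}{q}\,h_{top}(\sigma_{(\alpha,\beta)}) = h_{top}(\sigma_{(\alpha',\beta')}) > 0$, whereas $h_{top}(\sigma\!\mid_{B'}) = 0$. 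Hence $A'$ is the unique transitive component of maximal entropy.

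The crux is the intrinsic ergodicity of the coded base $(\Sigma_{(\alpha,\beta)},\sigma_{(\alpha,\beta)})$, and here I would invoke the Climenhaga--Thompson criterion. Writing $\Sigma_{(\alpha,\beta)} = \overline{\bigcup_n \Sigma_{(\alpha_n,\beta_n)}}$ as an increasing union of transitive subshifts of finite type coming from essential pairs, the language admits a free-concatenation (code) decomposition whose obstruction to specification is carried by the boundary orbits; the point to verify is that the entropy of this obstruction is strictly below $h_{top}(\sigma_{(\alpha,\beta)})$, which is exactly the hypothesis of \cite[Theorem A]{climenhaga} and yields a unique measure of maximal entropy $\mu$. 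Transporting $\mu$ through the conjugacy produces the unique measure of maximal entropy $\nu$ of $(A',\sigma^{q}\!\mid_{A'})$; since the shift is a bounded-to-one factor of itself, $\sigma_{*}\nu$ is again a $\sigma^{q}$-measure of maximal entropy and so coincides with $\nu$, whence $\nu$ is $\sigma$-invariant and is the unique measure of maximal entropy of $(A',\sigma\!\mid_{A'})$.

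Finally I would extend $\nu$ to the whole subshift by $\mu_{(\alpha',\beta')}(E) = \nu(E\cap A')$; this is a measure of maximal entropy since $A'$ carries the full entropy. If $\eta$ were a second measure of maximal entropy, then $\eta$ and $\mu_{(\alpha',\beta')}$ would be distinct ergodic measures, hence mutually singular by \cite[Theorem 6.10]{walters}, forcing $\mathrm{supp}(\eta) \subset \Sigma_{(\alpha',\beta')}\setminus A'$ and therefore $h_{\eta} \le h_{top}(\sigma\!\mid_{B'}) = 0 < h_{top}(\sigma_{(\alpha',\beta')})$, a contradiction. This is the extension step of Theorem \ref{ergodicidadintrinseca1} verbatim. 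The main obstacle is the entropy-gap verification feeding \cite[Theorem A]{climenhaga} for the coded base, namely controlling the obstruction entropy of the code; the component analysis, the passage from $\sigma^{q}$ to $\sigma$, and the final extension are all routine adaptations of the cases already treated.
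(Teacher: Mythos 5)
Your proposal follows the paper's own proof in all essentials: both pass through the conjugacy $\rho_{r_n}\circ\ldots\circ\rho_{r_1}$ between the coded base $(\Sigma_{(\alpha,\beta)},\sigma_{(\alpha,\beta)})$ and $(A^{\prime},\sigma^{q_1\cdots q_n}\mid_{A^{\prime}})$, obtain the unique measure of maximal entropy upstairs, extend it by $\mu_{(\alpha^{\prime},\beta^{\prime})}(E)=\mu_{A^{\prime}}(E\cap A^{\prime})$, and eliminate any competing measure by the support/mutual-singularity argument of Theorem \ref{ergodicidadintrinseca1}. There are, however, two points where the treatments diverge in substance. First, what you call the main obstacle, namely intrinsic ergodicity of the coded base, is not verified in the paper either: the proof there simply asserts that, the base being coded and conjugate to $(A^{\prime},\sigma^{q_1\cdots q_n}\mid_{A^{\prime}})$, a unique measure of maximal entropy exists; your explicit routing through \cite[Theorem A]{climenhaga}, with its entropy-gap hypothesis flagged as unchecked, is if anything more candid about what is being assumed. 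Second, the step where the paper does genuine work specific to the coded case is the exclusion of a third transitive component: a putative component $C^{\prime}$ of positive entropy is shown to pull back, for all large $j$, into the transitive subshifts of finite type $\Sigma_{(\alpha_j,\beta_j)}$ approximating the coded base, and this forces a contradiction with codedness; you assert the two-component structure only by analogy with Theorem \ref{prepara1renorm}, which is stated and proved for essential (hence periodic, finite-type) pairs, so its bounded-power argument does not apply verbatim to non-periodic coded sequences — this is the one place where your write-up is materially thinner than the paper's. On the other side of the ledger, your passage from the unique $\sigma^{q_1\cdots q_n}$-measure to a $\sigma$-invariant one (pushing forward by $\sigma$ and invoking uniqueness) supplies a detail that the paper glosses over entirely when it treats $\mu_{(\alpha^{\prime},\beta^{\prime})}$ as an element of $\mathbb{M}(\sigma_{(\alpha^{\prime},\beta^{\prime})})$.
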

\begin{proof}
Let $(\alpha, \beta) \in \mathcal{C}$. Consider $A^{\prime} = \mathop{\bigcap}\limits_{m= -\infty}^{\infty}\sigma^{m}(\rho_{r_n}\circ\ldots\circ\rho_{r_1}(\Sigma_{(\alpha, \beta)}))$. Since $(\Sigma_{(\alpha,\beta)}, \sigma_{(\alpha,\beta)})$ is coded, a similar argument as the one used in Theorem \ref{prepara1renorm} implies that $A^{\prime}$ is a transitive component. Moreover, observe that $(\Sigma_{(\alpha, \beta)}, \sigma_{(\alpha,\beta)})$ is topologically conjugated to $(A^{\prime}, \sigma^{q_1\cdot \ldots \cdot q_n}_{A^{\prime}})$. Then, there is a unique measure of maximal entropy for $(A^{\prime}, \sigma^{q_1\cdot \ldots \cdot q_n}_{A^{\prime}})$. We call such measure $\mu_{A^{\prime}}$. As in Theorem \ref{ergodicidadintrinseca1} we extend such measure to a measure $\mu_{(\alpha^{\prime}, \beta^{\prime})} \in \mathbb{M}(\sigma_{(\alpha^{\prime}, \beta^{\prime})}$ by considering $$\mu_{(\alpha^{\prime},\beta^{\prime})}(U) = \mu_{A^{\prime}}(U \cap A^{\prime}).$$To show that $\mu_{(\alpha^{\prime},\beta^{\prime})}$ is the unique measure of maximal entropy it suffices to show that $A^{\prime}$ is the unique component of maximal entropy of $(\Sigma_{(\alpha^{\prime}, \beta^{\prime})}).$ We claim that $(\Sigma_{(\alpha^{\prime}, \beta^{\prime})})$ has to transitive components, namely $A^{\prime}$ and $B^{\prime} = \left\{\xi_{r_n}^{\infty}, \zeta_{r_n}^{\infty} \right\}$. It is clear that $B^{\prime}$ is a transitive component. Assume that $\left(\Sigma_{(\alpha,\beta)}, \sigma_{(\alpha, \beta)}\right)$ posses another transitive component $C^{\prime}$ with positive topological entropy. This implies there is $M \in \mathbb{M}$ such that $$C^{\prime} \subset \mathop{\bigcup}\limits_{j=M}^{\infty}\rho_{r_n}\circ \ldots \circ \rho_{r_1}(\Sigma_{(\alpha_j, \beta_j)})$$ where $(\alpha_j, \beta_j)$ is the sequence constructed in \cite[Theorem 5.21, Lemma 6.1]{yomero3}. This implies that $(\rho_{r_n}\circ \ldots \circ \rho_{r_1})^{-1}(C^{\prime}) \subset \Sigma_{(\alpha_j, \beta_j)}$ for every $j \geq M$. Furthermore, $$\mathop{\bigcap}\limits_{m=-\infty}^{\infty}\sigma^m((\rho_{r_n}\circ \ldots \circ \rho_{r_1})^{-1}(C^{\prime}))$$ is a transitive component for $\left(\Sigma_{(\alpha_j, \beta_j)},\sigma_{(\alpha_j, \beta_j)}\right)$ for every $j \geq M$. This contradicts that $\left(\Sigma_{(\alpha_j, \beta_j)},\sigma_{(\alpha_j, \beta_j)}\right)$ is coded. Thus, $C^{\prime} = \emptyset$ which concludes the proof. 
\end{proof}

To finish the proof of Theorem \ref{elmerochingon} it is just need to show that $n$-renormalisation boxes cover $D_1$ as we state in the following theorem.

\begin{theorem}
$$\pi^{-1}(D_1) \cap \mathcal{LW}^{\prime} = \mathop{\bigcup}\limits_{n = 0}^{\infty}\mathop{\bigcup}\limits_{(r_1 \ldots r_n) \in \mathbb{Q}\cap(0,1)} \mathop{\bigcup}\limits_{(\alpha, \beta) \in \mathcal{E}}\left(B^n_{(r_1 \ldots r_n)}(\omega, \nu) \cup \rho_{r_n}\circ \ldots \circ \rho_{r_n}(\mathcal{C}^{\prime})\right).$$ \label{asiseacabadecubrir}
\end{theorem}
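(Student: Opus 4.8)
The plan is to prove the two inclusions separately, treating $n=0$ as the base case supplied by Corollary \ref{lascajascubren1} and using the entropy identity of Theorem \ref{lemadelaentropia} to bound the renormalisation depth required for an arbitrary pair. The reverse inclusion is the substantive one: it amounts to showing that repeated de-renormalisation by balanced pairs eventually lands any pair of $\pi^{-1}(D_1)\cap\mathcal{LW}'$ inside $D_2$, where Corollary \ref{lascajascubren1} applies.

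First I would check the inclusion from right to left. When $n=0$ the inner family collapses to $\left(\bigcup_{(\alpha,\beta)\in\mathcal{E}}\mathcal{B}^0(\omega,\nu)\right)\cup\mathcal{C}'$, which by Corollary \ref{lascajascubren1} equals $\pi^{-1}(D_2)\cap\mathcal{LW}'$ and hence sits inside $\pi^{-1}(D_1)\cap\mathcal{LW}'$. For $n\ge 1$ every element of $B^n_{(r_1,\ldots,r_n)}(\omega,\nu)$ already lies in $\mathcal{LW}'$ by Definition \ref{nrenormalisationbox}, while every element of $\rho_{r_n}\circ\cdots\circ\rho_{r_1}(\mathcal{C}')$ is admissible because balanced-word renormalisation preserves admissibility. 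In either case the pair is the image under $\rho_{r_n}\circ\cdots\circ\rho_{r_1}$ of a pair of positive entropy, so Theorem \ref{lemadelaentropia} shows its entropy equals $\frac{1}{q_1\cdots q_n}$ times a positive number and is therefore positive; consequently its Hausdorff dimension is positive and the pair belongs to $\pi^{-1}(D_1)\cap\mathcal{LW}'$.

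For the reverse inclusion take $(0\alpha,1\beta)\in\pi^{-1}(D_1)\cap\mathcal{LW}'$ and set $h=h_{top}(\sigma_{(\alpha,\beta)})>0$. If the associated parameter lies in $D_2$ we finish with the $n=0$ case. If instead it lies in $D_1\setminus D_2$, the consequence of \cite[Theorem 2.13]{sidorov1} and \cite[Theorem 3.8, Corollary 3.13]{sidorov6} recalled before Definition \ref{nrenormalisationbox} tells us that $(\alpha,\beta)$ is renormalisable by a balanced pair $(\xi_{r_1},\zeta_{r_1})$ with $r_1=p_1/q_1\in\mathbb{Q}\cap(0,1)$. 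I would then de-renormalise, passing to $(\rho_{r_1})^{-1}(0\alpha,1\beta)$; this pair is again admissible and, being a subshift on two symbols, has entropy at most $\log 2 = 1$, while Theorem \ref{lemadelaentropia} forces this entropy to equal $q_1 h>0$, so the new parameter again lies in $D_1$. Iterating, as long as the current pair remains in $D_1\setminus D_2$ it is renormalisable by a further balanced pair and I can de-renormalise once more.

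The heart of the argument is that this iteration cannot continue indefinitely. After $n$ de-renormalisations Theorem \ref{lemadelaentropia} gives that the entropy of the resulting pair equals $q_1\cdots q_n\cdot h$; since each $q_i\ge 2$ and the entropy of any subshift on two symbols is at most $1$, we obtain $2^n h\le q_1\cdots q_n\cdot h\le 1$, whence $n\le\log_2(1/h)$. Thus after finitely many steps the pair can no longer lie in $D_1\setminus D_2$, so its parameter lands in $D_2$. Corollary \ref{lascajascubren1} then places this terminal pair in $\bigcup_{(\alpha,\beta)\in\mathcal{E}}\mathcal{B}^0(\omega,\nu)\cup\mathcal{C}'$, and applying the composition $\rho_{r_n}\circ\cdots\circ\rho_{r_1}$ returns $(0\alpha,1\beta)$ to the matching $B^n_{(r_1,\ldots,r_n)}(\omega,\nu)$ or to $\rho_{r_n}\circ\cdots\circ\rho_{r_1}(\mathcal{C}')$, as desired. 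I expect the genuine obstacle to lie not in the termination bound, which is immediate once positivity of $h$ is combined with Theorem \ref{lemadelaentropia}, but in verifying cleanly that each single de-renormalisation returns a bona fide admissible pair whose parameter still belongs to $D_1$, so that the inductive step is legitimate and the balanced-word datum $r_k$ is unambiguously determined at every stage.
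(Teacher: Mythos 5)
Your proposal is correct and follows the paper's overall skeleton: the right-to-left inclusion via the entropy identity of Theorem \ref{lemadelaentropia} combined with $\dim_H(\Sigma_{(\alpha',\beta')}) = h_{top}(\sigma_{(\alpha',\beta')})/\lambda$, and the left-to-right inclusion by reducing to $\pi^{-1}(D_1\setminus D_2)$, invoking renormalisability by a balanced pair $(\xi_r,\zeta_r)$, and de-renormalising until the parameter lands in $D_2$, where Corollary \ref{lascajascubren1} finishes. The genuine difference lies in how termination of the de-renormalisation is established. The paper argues by contradiction plus maximality: if no finite depth existed, the pair would be infinitely renormalisable, and it cites \cite[Theorem 2.13]{sidorov1} (infinitely renormalisable pairs have zero entropy) to contradict membership in $D_1$; it then needs a separate maximality-of-$n$ argument to see that the terminal de-renormalised pair actually lies in $D_2$. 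You instead extract termination quantitatively from the paper's own machinery: each de-renormalisation multiplies entropy by $q_i\geq 2$ while the entropy of any subshift of $\Sigma_2$ is at most $1$, so the depth is bounded by $\log_2(1/h)$, and the terminal pair is in $D_1$ but no longer in $D_1\setminus D_2$, hence in $D_2$. This buys a self-contained argument with an explicit bound on the renormalisation depth, avoiding the external Glendinning--Sidorov zero-entropy theorem. Finally, the obstacle you flag at the end --- that $(\rho_{r})^{-1}$ of an admissible pair is again admissible and still in $D_1$, and that the identity of Theorem \ref{lemadelaentropia} (literally stated only for the corner pairs $\left(\sigma(\rho(\omega^{\infty})),\sigma(\rho(\nu^{\infty}))\right)$) extends to arbitrary pairs in a renormalisation box via Corollary \ref{prepara2renorm} and Lemma \ref{prepara3renorm} --- is equally unaddressed in the paper, whose proof simply asserts without justification that $(\rho_r)^{-1}(0\alpha',1\beta')\in\pi^{-1}(D_2)\cap\mathcal{LW}'$ (an assertion that is in fact premature, since a single de-renormalisation need not land in $D_2$); so your write-up is no less rigorous than the published one on this point.
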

\begin{proof}
Let $$(0\alpha^{\prime}, 1\beta^{\prime}) \in \mathop{\bigcup}\limits_{n = 0}^{\infty}\mathop{\bigcup}\limits_{(r_1 \ldots r_n) \in \mathbb{Q}\cap(0,1)} \mathop{\bigcup}\limits_{(\alpha, \beta) \in \mathcal{E}}\left(B^n_{(r_1 \ldots r_n)}(\omega, \nu) \cup \rho_{r_n}\circ \ldots \circ \rho_{r_n}(\mathcal{C}^{\prime})\right).$$ Then, there exist $n \geq 0$, $(r_1, \ldots ,r_n) \in (\mathbb{Q}\cap (0,1))^{n}$ and an essential pair $(\alpha, \beta)$ with associated pair $(\omega, \nu)$ such that $(0\alpha^{\prime},1\beta^{\prime}) \in B^n_{(r_1 \ldots r_n)}(\omega, \nu) \cup \rho_{r_n}\circ \ldots \circ \rho_{r_n}(\mathcal{C}^{\prime})$. From Theorem \ref{lemadelaentropia} and the fact that $\dim_{H}(\Sigma_{(\alpha^{\prime}, \beta^{\prime})}) = \frac{h_{top}(\sigma_{(\alpha^{\prime}, \beta^{\prime})})}{\lambda}$ we have that $(0\alpha^{\prime}, 1\beta^{\prime}) \in \pi^{-1}(D_1) \cap \mathcal{LW}^{\prime}$.

Assume now that $(0\alpha^{\prime},1\beta^{\prime}) \in \pi^{-1}(D_1) \cap \mathcal{LW}^{\prime}$. From Theorem \ref{lemacajas1} it suffices to consider $(0\alpha,1\beta) \in \pi^{-1}(D_1 \setminus D_1) \cap \mathcal{LW}^{\prime}$. Then, there is $r \in \mathbb{Q}\cap(0,1)$ such that $(0\alpha,1\beta) \in \left[\xi_{r}^{\infty}, \xi_{r}\zeta_{r}^{\infty}\right]_{\prec} \times \left[\zeta_{r}\xi_{r}^{\infty}, \zeta_{r}^{\infty} \right]_{\prec}$. Moreover, since $\dim_{H}(\Sigma_{(\alpha^{\prime}, \beta^{\prime})}) > 0$ and \cite[Theorem 1.2]{sidorov1} we have that $1\beta \prec \chi(0\alpha)$. From \cite[Corollary 5.15]{yomero3} we have that $(\alpha^{\prime}, \beta^{\prime})$ is renormalisable by $\xi_{r},\zeta_{r}$. Let $r_1 = r$ and consider $(\rho_{r})^{-1}(0\alpha^{\prime}, 1\beta^{\prime})$. Observe that $$(\rho_{r})^{-1}(0\alpha^{\prime}, 1\beta^{\prime}) \in \pi^{-1}(D_2) \cap \mathcal{LW}^{\prime}.$$ Then we have that $$(0\alpha^{\prime},1\beta^{\prime}) \in \mathop{\bigcup}\limits_{n = 0}^{\infty}\mathop{\bigcup}\limits_{(r_1 \ldots r_n) \in \mathbb{Q}\cap(0,1)} \mathop{\bigcup}\limits_{(\alpha, \beta) \in \mathcal{E}}\left(B^n_{(r_1 \ldots r_n)}(\omega, \nu) \cup \rho_{r_n}\circ \ldots \circ \rho_{r_n}(\mathcal{C}^{\prime})\right).$$ We claim that there exist $n \in \mathbb{N}$, $(r_1, \ldots r_n) \in (\mathbb{Q}\cap(0,1))^{n}$ and $(\alpha^{\prime \prime}, \beta^{\prime^\prime})$ such that $$(0\alpha,1\beta) = \rho_{r_n}\circ \ldots \circ \rho_{r_1}(0\alpha^{\prime \prime},1\beta^{\prime \prime}).$$ Suppose that such $n$ does not exist. Then $(\alpha^{\prime},\beta^{\prime})$ is infinitely renormalisable. Then, from \cite[Theorem 2.13]{sidorov1} we have that $h_{top}(\sigma_{(\alpha^{\prime},\beta^{\prime})}) = 0$ which contradicts that $(\alpha^{\prime},\beta^{\prime}) \in \pi^{-1}(D_1) \cap \mathcal{LW}^{\prime}$. Suppose that such $n \in \mathbb{N}$ exist and assume that $n$ is maximal. This gives that there is $(r_1, \ldots r_n) \in (\mathbb{Q}\cap(0,1))^{n}$ such that $(0\alpha,1\beta) = \rho_{r_n}\circ \ldots \circ \rho_{r_1}(0\alpha^{\prime \prime},1\beta^{\prime \prime})$. If $0\alpha^{\prime \prime},1\beta^{\prime \prime} \notin \pi^{-1}(D_2) \cap \mathbb{LW}^{\prime}$. Then there is $r \in \mathbb{Q}\cap(0,1)$ such that $0\alpha^{\prime \prime},1\beta^{\prime \prime} \in \left[\xi_{r}^{\infty}, \xi_{r}\zeta_{r}^{\infty}\right]_{\prec} \times \left[\zeta_{r}\xi_{r}^{\infty}, \zeta_{r}^{\infty} \right]_{\prec}$, which contradicts the maximality of $n$. Thus $$(0\alpha^{\prime},1\beta^{\prime}) \in  \mathop{\bigcup}\limits_{n = 0}^{\infty}\mathop{\bigcup}\limits_{(r_1 \ldots r_n) \in \mathbb{Q}\cap(0,1)} \mathop{\bigcup}\limits_{(\alpha, \beta) \in \mathcal{E}}\left(B^n_{(r_1 \ldots r_n)}(\omega, \nu) \cup \rho_{r_n}\circ \ldots \circ \rho_{r_n}(\mathcal{C}^{\prime})\right).$$
\end{proof}

Then, Theorem \ref{elmerochingon} follows from \cite[Theorem 3.2, Theorem 3.20]{yomero3}, Lemmas \ref{medidapositiva}, \ref{lascajascubren1}, Theorems \ref{ergodicidadintrinseca1}, \ref{ergodicidadintrinseca1renorm} and \ref{asiseacabadecubrir}.

\subsubsection*{Non intrinsically ergodic attractors in $D_1 \setminus D_2$}

We end the section constructing a countable family of non transitive subshifts that are not intrinsically ergodic. Recall that $\omega =  01$ and $\nu_k = 100(10)^k$ for some $k \geq 0$ and consider $(\alpha_k,\beta_k) \in \mathcal{LW}$ given by $$0\alpha = \omega\nu_k^{\infty} \hbox{\rm and }1\beta = \nu_k\omega^{\infty};$$ or $$0\alpha = \overline{\nu_k}{\overline{\omega}}^{\infty} \hbox{\rm and }1\beta = \overline{\omega}{\overline{\nu_k}}^{\infty}.$$ 

\begin{theorem} 
Let $n \in \mathbb{N}$, $(r_1 \ldots r_n) \in (\mathbb{Q}\cap(0,1))^{n}$ and let $(\alpha^{\prime},\beta^{\prime}) \in \mathcal{LW}$ satisfying that $$0\alpha = \rho_{r_n}\circ \ldots \circ \rho_{r_1}(\omega \nu_k^{\infty})$$  and $$ 1\beta = \rho_{r_n}\circ \ldots \circ \rho_{r_1}(\nu_k\omega^{\infty})$$ or $$0\alpha = \rho_{r_n}\circ \ldots \circ \rho_{r_1}(\overline{\nu_k}{\overline{\omega}}^{\infty})$$  and $$1\beta = \rho_{r_n}\circ \ldots \circ \rho_{r_1}(\overline{\omega}{\overline{\nu_k}}^{\infty}).$$ Then $(\Sigma_{(\alpha^{\prime},\beta^{\prime})}, \sigma_{(\alpha^{\prime},\beta^{\prime})})$ is not intrinsically ergodic.\label{laconstruccion}
\end{theorem}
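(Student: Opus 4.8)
The plan is to transport, through the substitution $\rho := \rho_{r_n}\circ\cdots\circ\rho_{r_1}$, the two equal-entropy pieces that make the Hofbauer--Glendinning--Hall examples fail intrinsic ergodicity, and to check that these pieces survive as two disjoint transitive components of $(\Sigma_{(\alpha',\beta')},\sigma_{(\alpha',\beta')})$, each carrying the full entropy. Put $q = q_1\cdots q_n$. First I would record the base case: the pair $(\alpha,\beta)$ with $0\alpha = \omega\nu_k^\infty$ and $1\beta = \nu_k\omega^\infty$ (respectively the mirror pair) is exactly the corner $(\omega\nu_k^\infty,\nu_k\omega^\infty)$ of $\mathcal{B}^0(\omega,\nu_k)$ excluded in Corollary \ref{notipofinito1}. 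By Theorem \ref{canon1} this base system is not intrinsically ergodic: through its associated Lorenz map it splits into two disjoint basic, hence transitive, components $A$ and $B$ of the non-wandering set with $h_{top}(\sigma\mid_A) = h_{top}(\sigma\mid_B) = h_{top}(\sigma_{(\alpha,\beta)})$, and these support two distinct ergodic measures $\mu_A,\mu_B$ of maximal entropy.

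Next I would use the substitution as a scaled conjugacy, exactly as in Proposition \ref{auxiliar1} and Theorem \ref{lemadelaentropia}. The relation $\rho\circ\sigma = \sigma^{q}\circ\rho$, together with the continuity and injectivity of $\rho$, shows that $\rho$ is a homeomorphism of $\Sigma_{(\alpha,\beta)}$ onto $\rho(\Sigma_{(\alpha,\beta)})\subset\Sigma_{(\alpha',\beta')}$ intertwining $\sigma$ with $\sigma^{q}$. Setting $A' = \bigcap_{m=-\infty}^{\infty}\sigma^m(\rho(A))$ and $B' = \bigcap_{m=-\infty}^{\infty}\sigma^m(\rho(B))$ and arguing as in Theorem \ref{prepara1renorm} and Corollary \ref{prepara2renorm}, I would show that $A'$ and $B'$ are transitive components of $\Sigma_{(\alpha',\beta')}$ and that the only further transitive component is $C' = \{\xi_{r_n}^\infty,\zeta_{r_n}^\infty\}$, which has zero entropy. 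The decisive point is that $A'$ and $B'$ do not merge: a two-way bridge between them inside $\Sigma_{(\alpha',\beta')}$ would, after applying $\rho^{-1}$, produce a two-way bridge between $A$ and $B$ in the base, contradicting that $A$ and $B$ are distinct transitive components there; hence $A'$ and $B'$ are distinct, and therefore disjoint.

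Finally I would compare entropies. Applying the conjugacy of Theorem \ref{lemadelaentropia} componentwise gives $h_{top}(\sigma\mid_{A'}) = \tfrac1q\,h_{top}(\sigma\mid_A)$ and $h_{top}(\sigma\mid_{B'}) = \tfrac1q\,h_{top}(\sigma\mid_B)$, while $h_{top}(\sigma\mid_{C'}) = 0$; since $h_{top}(\sigma\mid_A) = h_{top}(\sigma\mid_B)$ by Theorem \ref{canon1}, both values coincide and equal $\tfrac1q\,h_{top}(\sigma_{(\alpha,\beta)}) = h_{top}(\sigma_{(\alpha',\beta')})$. Concretely, pushing $\mu_A$ and $\mu_B$ forward by $\rho$ and averaging over the shift tower yields $\sigma$-invariant measures $\tilde\mu_A$ and $\tilde\mu_B$ with $h_{\tilde\mu_A}(\sigma) = h_{\tilde\mu_B}(\sigma) = \tfrac1q\,h_{top}(\sigma_{(\alpha,\beta)}) = h_{top}(\sigma_{(\alpha',\beta')})$ and with supports inside the disjoint components $A'$ and $B'$ (alternatively, each of the subshifts $A'$ and $B'$ carries a measure of maximal entropy by upper semicontinuity of the entropy map). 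These are two distinct measures of maximal entropy, so $(\Sigma_{(\alpha',\beta')},\sigma_{(\alpha',\beta')})$ is not intrinsically ergodic; the mirror case follows verbatim after conjugating by $x\mapsto\bar x$.

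The step I expect to be the main obstacle is precisely this non-merging claim: the continuity and injectivity of $\rho$, the factor $\tfrac1q$ in the entropy, and $h_{top}(\sigma\mid_{C'})=0$ are all delivered by the results already established, whereas the separation of $A'$ and $B'$ must be obtained by pulling a hypothetical connecting bridge back through $\rho^{-1}$ and appealing to the bridge-free splitting of the base non-wandering set underlying Theorem \ref{canon1}.
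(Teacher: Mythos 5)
Your proposal is correct and follows essentially the same route as the paper: invoke Theorem \ref{canon1} for the base pair to get two transitive components $A$ and $B$ of equal entropy, transport them through $\rho_{r_n}\circ\cdots\circ\rho_{r_1}$ using the scaled-conjugacy and entropy results (Proposition \ref{auxiliar1}, Theorem \ref{lemadelaentropia}, Lemma \ref{prepara3renorm}, Corollary \ref{prepara2renorm}), and conclude with two maximal-entropy measures of disjoint support. The only difference is one of detail: you spell out the non-merging of $A'$ and $B'$ and the averaging construction of the invariant measures, which the paper compresses into the citations above and the phrase ``as we showed in Theorem \ref{canon1}''.
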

\begin{proof}
From Theorem \ref{canon1} we have that $\left(\Sigma_{(\alpha,\beta)}, \sigma_{(\alpha,\beta)}\right)$ is not intrinsically ergodic since there are two transitive components $A$ and $B$ such that $h_{top}(\sigma\mid_{A})= h_{top}(\sigma\mid_{B})$. Then by Theorem \ref{lemadelaentropia} and Lemma \ref{prepara3renorm} we have that $h_{top}(\sigma\mid_{A^{\prime}})= h_{top}(\sigma\mid_{B^{\prime}}).$ Thus, as we showed in Theorem \ref{canon1} we have there are two $\sigma_{(\alpha^{\prime}, \beta^{\prime})}$ invariant measures with disjoint supports such that $$h_{\mu_{A^{\prime}}} = h_{\mu_{B^{\prime}}} = h_{top}(\sigma_{(\alpha^{\prime},\beta^{\prime})}).$$
\end{proof}

\section*{Acknowledgements}

\noindent The paper extends the content of the author's doctoral dissertation \cite{yomero2} which was fully sponsored by the CONACyT scholarship for Doctoral Students no. 213600. The following research was sponsored by FAPESP 2014/25679-9. The author would like to thank his PhD. supervisor, Nikita Sidorov for his initial suggestion of the problem and his ongoing support. In addition, the author wishes to thank Sofia Trejo Abad for her support and comments during the preparation of the manuscript.

\end{document}